\renewcommand{\arraystretch}{1.15}
\newcommand\blfootnote[1]{%
 \begingroup
 \renewcommand\thefootnote{}\footnote{#1}%
 \addtocounter{footnote}{-1}%
 \endgroup
}
\numberwithin{equation}{section}
\theoremstyle{plain}
\newtheorem{theorem}{Theorem}[section]
\newtheorem{lemma}[theorem]{Lemma}
\newtheorem{proposition}[theorem]{Proposition}
\theoremstyle{definition}
\newtheorem{definition}[theorem]{Definition}
\newtheorem{case[theorem]}{Case}
\theoremstyle{remark}
\newtheorem{remark}[theorem]{Remark}
\numberwithin{equation}{section}
\newcommand{\abs}[1]{\lvert#1\rvert}
\def\R{\mathbb{R}}
\renewcommand\@makefntext[1]{%
 \noindent
 \makebox[1em][r]{\@makefnmark}#1}
\newcommand{\Z}{\mathbb Z}
\newcommand{\cB}{\mathcal B}
\newcommand{\cH}{\mathcal H}
\newcommand{\cM}{\mathcal M}
\newcommand{\cI}{\mathcal I}
\newcommand{\supp}{\mathrm{supp}}
\newcommand{\dist}{\mathrm{dist}}
\newcommand{\Iint}{\mathcal I^{\mathrm{int}}}
\newcommand{\Ibdry}{\mathcal I^{\mathrm{bdry}}}
\def\R{\mathbb{R}}
\title{Wave Packets and Eigenvalue Estimates for Limiting Operators on the Disk}
\author{Kevin Hughes, Arie Israel, Azita Mayeli}
\date{\today}
\begin{document}
\maketitle

\begin{abstract}
We study two-dimensional spatio--spectral limiting operators
\[
T_R := P_{D(R)} B_S P_{D(R)} : L^2(\mathbb{R}^2) \rightarrow L^2(\mathbb{R}^2),
\]
where $D(R)$ is a disk of radius $R>1$,
$S\subset\mathbb{R}^2$ is a domain with well--shaped boundary, $P_{D(R)}$ is the orthogonal projection on the subspace of functions supported on $D(R)$, and $B_S$ is the orthogonal projection on the subspace of functions whose Fourier transform is supported on $S$. 
We construct a disk-adapted wave-packet frame for $L^2(D(R))$ with frame bounds uniform in $R$ using Gevrey-$s$ cutoffs ($s>1$) to obtain near-exponential Fourier localization.
Exploiting these localization estimates, we bound the size of the eigenvalue plunge--region for $T_R$ and prove that for each $s>1$ and each $\varepsilon\in(0,1/2)$,
\[
\#\{k : \lambda_k(T_R)\in(\varepsilon,1-\varepsilon)\}
=
O\!\left(R (\log(R/\varepsilon))^{1+2s}\right),
\]
with constants depending on $s$ and the geometric parameters of $S$.
This bound improves existing plunge--region estimates in the classical setting where both domains are disks, when $\varepsilon$ scales like $R^{-\nu}$ for a fixed $\nu > 0$. 
By an affine transformation, the same result holds if $D(R)$ is a scaled ellipse.

\end{abstract}

\blfootnote{A.~ Israel was supported by the Air Force Office of Scientific Research, under award FA9550-19-1-0005 and the National Science Foundation grant DMS-2453770. A.~Mayeli was supported in part by the National Science Foundation grant DMS-2453769, the AMS-Simons Research Enhancement Grant, and the PSC-CUNY grants 67807-00 56 and 67807-00 57.}

\thanks{Keywords: Spatio--spectral limiting operator (SSLO); well-shaped domains;  wave packets; frames; eigenvalue estimates.}
\setcounter{tocdepth}{1}
\tableofcontents

\section{Introduction}
\subsection{Background and SSLOs}
We quantify the tradeoff between spatial and frequency localization via eigenvalue estimates for spatio--spectral limiting operators (SSLOs). 
Given measurable sets \( F,S\subset\mathbb{R}^d\), define the orthogonal spatial projection \(P_F : L^2(\R^d) \rightarrow L^2(\R^d)\), \(P_F f := \mathbf{1}_F f\) and the band--limiting projection \(B_S : L^2(\R^d) \rightarrow L^2(\R^d)\), \(B_S f := \mathcal{F}^{-1}(\mathbf{1}_S \mathcal{F} f)\) to form the spatio--spectral limiting operator \(T_{F,S}= P_F B_S P_F\) (or equivalently \(B_S P_F B_S\), which has the same non-zero spectrum).  Here, \(\mathcal{F}\) denotes the Fourier transform on \(L^2(\mathbb{R}^d)\) and \(\mathcal{F}^{-1}\) denotes its inverse; both are normalized as in \eqref{def:FT}. 
If \(F\) and \(S\) have finite Lebesgue measure, then \(T_{F,S}\) is compact and self--adjoint on $L^2(\R^d)$, with eigenvalues $1 \geq \lambda_1(T_{F,S}) \geq \lambda_2(T_{F,S}) \geq \cdots > 0$. The uncertainty principle asserts that all of the eigenvalues are in fact strictly less than \(1\). 

For any subset \(F\) in \(\mathbb{R}^d\) and \(R>0\), define the isotropic dilation 
\begin{equation}\label{def:dilates}
F(R):= \{Rx : x \in F\}.
\end{equation}
We have the trace identity \(\mathrm{tr}(P_{F(R)} B_{S} P_{F(R)}) = (2\pi)^{-d} |F|\cdot|S| \cdot R^d\) provided $F$ and $S$ have finite Lebesgue measure. Ordering the eigenvalues in non-increasing order, one expects roughly \((2\pi)^{-d} |F|\cdot|S| \cdot R^d \) eigenvalues close to \(1\), with the remainder rapidly decaying to \(0\). Heuristically, the eigenvalues of such operators are clustered near \(1\) or \(0\). 
To make this precise, let \(N_\varepsilon(R)\) denote the number of eigenvalues of \(P_{F(R)} B_{S} P_{F(R)}\) that are greater than \(\varepsilon \in (0,1)\). A classical result of Landau \cite{Landau75} shows that
\begin{equation}\label{LandauWeyl}
\lim_{R \to \infty} R^{-d}N_\varepsilon(R) = (2\pi)^{-d}|F|\cdot|S|
.\end{equation}
Thus, the number of eigenvalues of \(P_{F(R)} B_{S} P_{F(R)}\) exceeding a fixed threshold obeys a Weyl-type law as \(R\to\infty\). The original argument in \cite{Landau75} does not provide quantitative rates, and a natural question is to understand the distribution of the eigenvalues of spatio-spectral limiting operators for a fixed dilation parameter $R$. 

Recent works, such as \cite{israel15eigenvalue,israelmayeli2023acha,marceca2023,hughes2024eigenvalue}, focused on making Landau's asymptotic results quantitative by proving explicit bounds on the size of the \emph{plunge region} \(\{\lambda_k(T)\in(\varepsilon,1-\varepsilon)\}\), where $T$ is an SSLO, \(\lambda_k(T)\) are its eigenvalues, and \(\varepsilon \in (0,1/2)\). 
Bounding the size of the plunge region allows us to quantify the clustering behavior of eigenvalues near $1$ and $0$. 
Asymptotic estimates on the size of the plunge region were developed first in the one--dimensional setting, starting with the work of Landau and Landau--Widom in \cite{LandauWidom80,Landau93}. 
Recently, explicit non--asymptotic estimates in the same setting were obtained in \cite{israel15eigenvalue,Bonami21,Kulikov24,davenport2021improved,osipov2013}. 
Related discrete analogues, for time--frequency operators, appear in \cite{Bell5,davenport2017,davenport2019fast,davenport2021improved}, and for higher--dimensional SSLOs, appear in \cite{marceca2023, Li2025Approximating, GJMP_prolateHD}. 
Recent works \cite{israelmayeli2023acha,marceca2023,hughes2024eigenvalue} developed estimates for the number of eigenvalues in the plunge region in settings where \(F\) and \(S\) are bounded domains in \(\mathbb{R}^d\) (with \(d \geq 2\)) satisfying various geometric regularity assumptions on the boundaries of \(F\) and \(S\).

\subsection{Main results}

The present paper provides a strong estimate for the number of eigenvalues in the plunge region when the domains \(F\) and \(S\) are taken from a restricted geometric class. 

\begin{theorem}\label{thm:newmain} 
Assume \(d=2\). 
Let \( F \subset \R^2 \) be an ellipse centered at the origin, and let \(S \subset \R^2\) be a well--shaped domain (see Definition~\ref{def:well--shaped}). 
For each \(R>1\), define the spatio--spectral limiting operator 
\begin{equation}\label{def:SSLO}
T_R := P_{F(R)} B_{S} P_{F(R)} : L^2(\mathbb{R}^2)\to L^2(\mathbb{R}^2)
\end{equation}
whose eigenvalue sequence \(\{\lambda_k(T_R)\}_{k\geq 0}\) lies in \([0,1]\). 
For each $s > 1$, there exists a constant \(C_{s,S,F}\) depending only on \(s,S,\) and \(F\) such that for all \(\varepsilon \in (0,1/2)\) and \(R > 1\), 
\begin{equation}\label{bd:plunge}
\#\{\lambda_k(T_R)\in(\varepsilon,1-\varepsilon)\}
\leq C_{s,S, F} R \big(\log(R/\varepsilon)\big)^{1+2s}.
\end{equation}
\end{theorem}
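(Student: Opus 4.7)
By an affine change of coordinates carrying $F$ to the unit disk, I first reduce to the case $F = D(1)$; this transforms $S$ into another well-shaped domain with constants depending only on $F$ and the original $S$. From here I plan to exploit the disk-adapted Gevrey-$s$ wave-packet frame $\{\varphi_j\}_{j \in J}$ for $L^2(D(R))$ constructed earlier in the paper, whose frame bounds are uniform in $R$. Each $\varphi_j$ is Fourier-localized near a frequency center $\xi_j$ with near-exponential decay $|\widehat{\varphi_j}(\xi)| \lesssim \exp(-c_s |\xi - \xi_j|^{1/s})$ outside a bounded neighborhood of $\xi_j$.

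Fix the Gevrey threshold $L := (\alpha_s \log(R/\varepsilon))^s$ with $\alpha_s$ large enough that the $L^2$-mass of $\widehat{\varphi_j}$ outside the disk of radius $L$ about $\xi_j$ is at most $\varepsilon/R$, and partition $J$ into inside packets $J_+ = \{j : \xi_j \in S,\ \dist(\xi_j, \partial S) > L\}$, outside packets $J_- = \{j : \xi_j \notin S,\ \dist(\xi_j, \partial S) > L\}$, and boundary packets $J_0 = \{j : \dist(\xi_j, \partial S) \leq L\}$. The Gevrey decay then yields $\|(I - B_S)\varphi_j\|_2 \leq \varepsilon/R$ for $j \in J_+$ and $\|B_S \varphi_j\|_2 \leq \varepsilon/R$ for $j \in J_-$, so that $T_R$ acts on the $J_+$-packets approximately as the identity and on the $J_-$-packets approximately as zero.

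The crux of the proof, and the step I expect to be the main obstacle, is to convert these per-packet estimates into an eigenvalue count despite the non-orthogonality of the frame. My plan is to construct a self-adjoint operator $A$ with spectrum contained in $\{0, 1\}$ together with a self-adjoint finite-rank correction $K$ satisfying $\mathrm{rank}(K) \leq C |J_0|$ and $\|T_R - A - K\| \leq \varepsilon/2$; a natural choice is for $A$ to be the orthogonal projection onto $\overline{\mathrm{span}}\{\varphi_j : j \in J_+\}$, with $K$ encoding the action of $T_R$ on the boundary packets. Bounding the approximation error will rely on the uniform frame constants combined with an almost-orthogonality estimate between well-separated packets, which should follow from the Gevrey tails. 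Once this decomposition is in place, Weyl's inequality yields $\#\{\lambda_k(T_R) \in (\varepsilon, 1-\varepsilon)\} \lesssim |J_0|$, and the proof concludes with the geometric count $|J_0| \lesssim R(\log(R/\varepsilon))^{1+2s}$: the well-shaped hypothesis makes the width-$L$ neighborhood of $\partial S$ have area $O((\log(R/\varepsilon))^s)$, which combines with the wave-packet density from the radial--angular decomposition of $D(R)$---including the logarithmic refinement required by the nonlinear-phase boundary sectors---to produce the exponent $1 + 2s$.
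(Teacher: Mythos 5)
Your setup matches the paper closely: the affine reduction to the disk, the Gevrey-$s$ wave-packet frame on $D(R)$, the trichotomy of indices by the position of the packet's frequency localization relative to $S$ and $\partial S$, and the final geometric count $|J_0|\lesssim R(\log(R/\varepsilon))^{1+2s}$ are all the same ingredients the paper uses. But the step you yourself flag as the crux — converting per-packet energy bounds into an eigenvalue count — is handled by a genuinely different mechanism, and your version of it has a real gap.

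You propose to build a self-adjoint $A$ with $\mathrm{Spec}(A)\subset\{0,1\}$ and a low-rank correction $K$ with $\|T_R - A - K\|\le\varepsilon/2$, then invoke Weyl's inequality. This requires an \emph{operator-norm} bound, and passing from per-packet estimates $\|(I-B_S)\varphi_j\|\le\varepsilon/R$ (for $j\in J_+$) to an operator-norm estimate for a non-orthogonal frame is exactly where the argument is likely to fall apart: the number of $J_+$-packets is of order $R^2$, and summing naively over overlaps (even with the Gevrey almost-orthogonality you allude to) does not obviously produce an $O(\varepsilon)$ norm bound; you would need a careful Schur-test or Cotlar--Stein argument whose details are not sketched and are not clearly feasible with $\varepsilon/R$ per-packet tails. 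The paper avoids this entirely: it uses a frame-theoretic eigenvalue counting lemma (Lemma~\ref{lem:Romero_lem}, due to Marceca--Romero--Speckbacher, extending a lemma of Israel) which takes the \emph{summed} energy inequality
\[
\sum_{\cI_1}\|T\phi_i\|^2 + \sum_{\cI_2}\|(I-T)\phi_i\|^2 \le \tfrac{A}{2}\varepsilon^2
\]
as its hypothesis and outputs $\#\{\lambda_n\in(\varepsilon,1-\varepsilon)\}\le \tfrac{2}{A}\#\cI_3$ directly, with no operator-norm estimate and no rank-one perturbation argument. In other words, the summed energy bound — which is precisely what Proposition~\ref{prop1} gives you and which you essentially have — is already enough; you do not need to build $A$ and $K$.

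Two further points worth correcting. First, your model ``each $\varphi_j$ is Fourier-localized near a frequency center $\xi_j$'' is accurate only for the interior packets (linear phase). The boundary packets have a nonlinear phase in Cartesian coordinates, and their Fourier transforms are analyzed via a Bessel-function expansion (Lemmas~\ref{lem:radial-bounds1}--\ref{lem:type2_decay}); they are placed into $\cI_1$ or $\cI_3$ by a criterion in terms of $(j,m_1,m_2)$ and not by a distance from a frequency center to $\partial S$. Your argument would need a separate branch for these. Second, the well-shaped bound applied with $t=L=(\alpha_s\log(R/\varepsilon))^s$ gives area $O(\max\{L^2, L\cH^1(\partial S)\})$, which is $O(L^2) = O((\log(R/\varepsilon))^{2s})$ when $L$ is large relative to $\cH^1(\partial S)$, not $O((\log(R/\varepsilon))^s)$ as you state; the final exponent $1+2s$ in the paper comes from the boundary-sector contribution $\#\cI_3^{\mathrm{bdry}}\lesssim R\log(1/\delta)^{1+2s}$, not from an area estimate of this form.
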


See Definition~\ref{def:well--shaped} below for the definition of a ``well--shaped'' domain, which includes convex domains among other domains with sufficiently regular boundaries. 
We bring to attention that Theorem~\ref{thm:newmain} gives new bounds for the size of the plunge region of limiting operators when \(S\) and \(F\) are chosen to be the unit disk. See Section \ref{comparison} for a comparison with previous results. 

By invariance of the Fourier transform under invertible affine maps and the affine invariance of the class of well--shaped domains, Theorem~\ref{thm:newmain} reduces to the case in which \(F\) is the unit disk $D$, \(R=2^j\) for some \(j\in\mathbb{N}\), and \(S\) has diameter at most $1$.
%
Henceforth, we assume that \(F(R) = D(R)\) is the two-dimensional disk of radius $R$ centered at the origin and \(R\) is a positive integer power of \(2\). 
We prove Theorem~\ref{thm:newmain} by constructing a system of wave packets on \(D(R)\) adapted to \(S\), satisfying the energy estimate (II.5) hypothesized in \cite{HughesIsraelMayeli_SampTA2025}. Specifically, we prove:

\begin{theorem}\label{thm2}
Fix a dyadic integer $R \geq 2$ and $s > 1$. There exists a frame \(\{\psi_\nu\}_{\nu \in \cI}\) for $L^2(D(R))$, with absolute frame constants $0 <A < B < \infty$ (independent of \(R\) and \(s\)), such that the following holds. Let  \(S \subset \R^2\) be a well--shaped domain. For each \(\varepsilon \in (0,1/2)\) there exists a partition of the index set \(\cI=\cI_1\cup \cI_2\cup \cI_3\) and a constant \(C>0\) determined by $s$ and the geometric parameters of $S$ (and independent of \(R\) and \(\varepsilon\)) satisfying
\begin{equation}\label{eqn:energy1}
\sum_{\cI_1}\|\widehat{\psi_\nu}\|_{L^2(S)}^2 + 
\sum_{\cI_2}\|\widehat{\psi_\nu}\|_{L^2(\R^2\setminus S)}^2
\leq \varepsilon^2
\end{equation}
and 
\begin{equation}\label{eqn:residual_bd}
\#\cI_3 \le C R \log(R/ \varepsilon)^{1+2s}.
\end{equation}
\end{theorem}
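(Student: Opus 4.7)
I would construct $\{\psi_\nu\}_{\nu\in\cI}$ as indicated in the abstract: tile $D(R)$ by unit-scale interior sectors inside $D(R-\tfrac12)$ and anisotropic, curvelet-like boundary sectors in the annulus $D(R)\setminus D(R-\tfrac12)$, and to each sector attach a Gabor-type lattice of packets $\chi_\nu(x)e^{i\phi_\nu(x)}$ with a Gevrey-$s$ bump $\chi_\nu$. The phase $\phi_\nu$ is linear on interior sectors and nonlinear (curvature-matched to $\partial D(R)$) on boundary sectors. Uniform frame bounds $A \leq B$ follow from a partition-of-unity argument: each sector supports a local Gabor frame with absolute constants, the sectors overlap boundedly, and the Gevrey cutoffs are $L^2$-normalized uniformly in $s$.

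The core analytic input is the Fourier localization of each $\psi_\nu$. For interior packets with linear phase, $\widehat{\psi_\nu}(\xi)=e^{-ix_\nu\cdot(\xi-\xi_\nu)}\widehat{\chi_\nu}(\xi-\xi_\nu)$, and the Gevrey-$s$ estimate $|\widehat{\chi_\nu}(\zeta)|\leq C_s e^{-c_s|\zeta|^{1/s}}$ yields near-subexponential concentration about the modulation point $\xi_\nu$. For boundary packets, a Gevrey-regular stationary-phase analysis (presumably carried out in an earlier section of the paper) gives the analogous estimate with concentration on a short frequency arc $\gamma_\nu$. Writing $E_\nu$ for the concentration set (point or arc), one has $|\widehat{\psi_\nu}(\xi)|\lesssim C_s \exp(-c_s \dist(\xi,E_\nu)^{1/s})$.

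Set the threshold $T:=C_1(\log(R/\varepsilon))^s$ with $C_1=C_1(c_s)$ to be chosen, and partition
\begin{align*}
\cI_1 &:= \{\nu : E_\nu\subset S,\ \dist(E_\nu,\partial S)\geq T\},\\
\cI_2 &:= \{\nu : E_\nu\cap S=\emptyset,\ \dist(E_\nu,\partial S)\geq T\},\\
\cI_3 &:= \cI\setminus(\cI_1\cup\cI_2).
\end{align*}
On $\cI_1$ and $\cI_2$, integrating the Gevrey tail past distance $T$ yields $\|\widehat{\psi_\nu}\|_{L^2(S^c)}^2 \lesssim e^{-2c_sT^{1/s}}$ and $\|\widehat{\psi_\nu}\|_{L^2(S)}^2 \lesssim e^{-2c_sT^{1/s}}$ respectively. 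Summing over the $|\cI|=O(R^2)$ indices and choosing $C_1$ so that $R^2 e^{-2c_sT^{1/s}}\leq\varepsilon^2$ gives \eqref{eqn:energy1}.

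The heart of the argument, and the principal obstacle, is the bound on $|\cI_3|$. A packet is in $\cI_3$ iff $E_\nu$ meets the shell $\Sigma_T:=\{\xi:\dist(\xi,\partial S)\leq T\}$. The well-shaped property of $S$ implies $|\partial S|=O(1)$ with bounded-variation tangent direction, so $\Sigma_T$ has area $O(T)$ and admits a cover by $O(T)$ frequency rectangles adapted to the directions in the wave packet system. One then counts the wave packets whose concentration set meets each such rectangle: exploiting the anisotropy matching between the boundary curvelets and $\partial S$, this count is $O(R \cdot \mathrm{polylog}(R/\varepsilon))$ per rectangle. Aggregating over the $O(T)$ rectangles and absorbing the Gevrey-broadening factor of $T$ from the width of each $E_\nu$ yields $|\cI_3|\lesssim R(\log(R/\varepsilon))^{1+2s}$. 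Producing the prefactor $R$ rather than $R^2$ is the delicate point; it requires matching the curvelet-like scales of boundary wave packets to the well-shaped geometry of $\partial S$, together with a uniform-in-$R$ control on the Gevrey constants appearing in the stationary-phase analysis of boundary packets.
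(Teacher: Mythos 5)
Your interior-packet treatment is in the right spirit, but even there the threshold must be scale-adapted: the paper uses $\mathrm{dist}(c_\ast 2^{-j}\mathbf{m},\partial S)\lesssim A\,2^{-j}\log(1/\delta)^s$, not a fixed $T\approx\log(R/\varepsilon)^s$. An interior packet at scale $j$ has frequency spread $\approx 2^{-j}$, and the Gevrey decay is in the rescaled variable $2^j\xi-c_\ast\mathbf m$ (Lemma~\ref{lem:typeI}), so the plunge shell at scale $j$ has thickness $2^{-j}\log^s$, and its lattice-point count is $M_j\lesssim\max\{2^j\log^s,\log^{2s}\}$ (Lemma~\ref{lem:residual_card_bd}). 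Summing $2^{j_{\max}-j}M_j$ gives the $R\log^{\cdot}$ bound; with your fixed $T$ the same sum balloons to $R^2\log^{2s}$, which destroys the theorem. (You also label $\cI_1,\cI_2$ backwards relative to \eqref{eqn:energy1}, but that is cosmetic.)

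The substantive gap is the boundary packets. You model each boundary packet as having a short, well-localized frequency arc $E_\nu$ produced by stationary phase, and partition by $\dist(E_\nu,\partial S)$. This picture is wrong. A boundary packet at scale $j<0$ has radial spatial extent $\approx 2^j<1$ and arclength extent $\approx 1$, hence frequency support of radial width $\approx 2^{-j}\geq 2$, which strictly exceeds $\diam(S)\leq 1$. There is no compact concentration set comparable to $S$ — the Fourier transforms of boundary packets sprawl across and far beyond $S$. The paper does not use stationary phase; it uses the Jacobi--Anger/Bessel expansion of $\widehat{\psi_{j,k,\mathbf m}}$ together with Bessel and Gevrey estimates to prove (Lemma~\ref{lem:type2_decay}) the uniform bound $\sup_{|\xi|\le 1}|\widehat{\psi_{j,k,\mathbf m}}(\xi)|\lesssim R\,2^{j/2}\,e^{-c|m_1|^{1/s}}e^{-c|m_2|^{1/s}}$, i.e.\ decay in the modulation index $\mathbf m$, not in $\xi$. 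Consequently the boundary packets are never sent to $\cI_2$; the split \eqref{defn:partition_bdry} assigns packets with $j\le -\log(1/\delta)$ or $|m_i|>C\log(1/\delta)^s$ to $\cI_1$ (their $L^2(S)$ mass is tiny irrespective of $\partial S$), and the remainder to $\cI_3$. The boundary residual count $\#\cI_3^{\mathrm{bdry}}\lesssim R\log(1/\delta)^{1+2s}$ is then a trivial product: $O(R)$ angular sectors $\times\,O(\log(1/\delta))$ radial scales $\times\,O(\log(1/\delta)^s)^2$ modulations — this is the real source of the $1+2s$ exponent, and it has nothing to do with covering a $T$-shell around $\partial S$ by rectangles or with ``anisotropy matching.'' Without replacing the arc-concentration model by the modulation-decay estimate, your counting argument for $\cI_3$ and your energy estimate for boundary packets in $\cI_1\cup\cI_2$ both fail.
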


For \(\varepsilon \in (0,1/2)\), we refer to estimates \eqref{eqn:energy1} and \eqref{eqn:residual_bd} as \emph{energy concentration estimates of order \( \varepsilon\) on \(D(R)\times S\)}. We also refer to the set \(\cI_3\) as the \emph{residual set}. 

In the course of the paper, we break this theorem into Propositions \ref{prop2} and \ref{prop1} whose combination immediately yields the theorem.

This theorem addresses Open Problem~2 from \cite{HughesIsraelMayeli_SampTA2025}, which asks to construct a wave packet frame on the $d$-dimensional unit ball $B_d(0,1)$ that admits a decomposition satisfying energy estimates as in \eqref{eqn:energy1}, with the cardinality of the residual set bounded by
\[
\#\cI_3 \le C R^{d-1}\log(R/ \varepsilon)^{J},
\]
for some $J>0$ and  \(C>0\). Theorem~\ref{thm2} answers this problem for $d=2$. The problem remains open in dimensions $d\geq 3$.

\subsection{Comparison of \eqref{bd:plunge} to previous estimates}\label{comparison}
Let \(F,S\) be subsets of \(\R^d\) with finite Lebesgue measure, \( T := P_{F} B_{S} P_{F} \) be the associated SSLO and \(\{\lambda_k(T)\}\) be the eigenvalues of the SSLO \(T\). 
For \(\varepsilon \in (0,1/2)\), we want sharp bounds for the cardinality of the plunge region which we denote by 
\[
M_\varepsilon(T) :=\# \{\lambda_k(T)\in(\varepsilon,1-\varepsilon)\}.
\]

For the operator $T_R=P_{F(R)}B_S P_{F(R)}$ with $S$ well--shaped, we compare the plunge--region bound \eqref{bd:plunge} to previous estimates for \(M_\varepsilon(T_R)\). 
To focus the discussion and to simplify our comparison, we only state the previous bounds in dimension $d=2$. 
For the remainder of this section, ``domains" will refer to bounded, open, connected subsets of the Euclidean plane \(\mathbb{R}^2\).

For the following comparisons, assume that \(\varepsilon \in (0,1/2)\) and \(R>1\). When \(F\) is chosen to be the unit square \([0,1]^2\) so that $F(R) = [0,R]^2$, and \(S\) is chosen to be a coordinate--wise symmetric convex body of diameter at most $1$, \cite{israelmayeli2023acha} proved that 
\begin{equation}\label{bd:IM24}
M_\varepsilon(T_R)
\leq C \max\{ R (\log(R/\varepsilon) \big)^{5/2}, (\log(R/\varepsilon) \big)^{5} \}
\end{equation}
where the implicit constant does not depend on \(R,\varepsilon\) nor on \(S\). For fixed $\varepsilon$ and all sufficiently large $R$, the first term on the right-hand side of \eqref{bd:IM24} dominates the second term and is sharper in the log-power than Theorem~\ref{thm:newmain}.

To our knowledge, the only other works obtaining quantitative eigenvalue bounds for limiting operators in higher dimensions are \cite{marceca2023} and \cite{hughes2024eigenvalue}. Suppose $F$ and $S$ are domains whose boundaries  \(\partial F\) and \(\partial S\) are maximally Ahlfors regular with finite regularity constants. Then \cite{marceca2023} proved that for every $\alpha\in (0,1/2]$,
\begin{equation}\label{bd:MRS24}
M_\varepsilon(T_R)
\leq C_{\alpha,F, S} R \big( \log(R/\varepsilon) \big)^{4(1+\alpha)+1}. 
\end{equation} 
Subsequently, for the same class of domains, we proved in \cite{hughes2024eigenvalue} that 
\begin{equation}\label{bd:HIM25}
M_\varepsilon(T_R)
\leq C_{F,S} R \Big( (\log R) (\log\varepsilon^{-1})^2 + ( \log R)^3 (\log \varepsilon^{-1}) \Big)
.\end{equation}
A direct comparison of the right–hand side of \eqref{bd:HIM25} with those of \eqref{bd:MRS24} and \eqref{bd:plunge} follows from the elementary inequality
\[
(\log R) (\log\varepsilon^{-1})^2 + ( \log R )^3 (\log \varepsilon^{-1}) 
\leq 
( \log(R/\varepsilon))^4,
\]
which is a consequence of the binomial expansion of $(a+b)^4$ with $a=\log R$ and $b = \log \varepsilon^{-1}$. This estimate is essentially sharp when $R$ grows polynomially in $\varepsilon^{-1}$. In this regime, \eqref{bd:HIM25} improves \eqref{bd:MRS24} by saving one power of $\log(R/\varepsilon)$. In the same regime, Theorem~\ref{thm:newmain} saves almost one additional power of $\log(R/\varepsilon)$ over \eqref{bd:HIM25}. 
Alternatively, for fixed \(\varepsilon\), the bound \eqref{bd:HIM25} is slightly better than \eqref{bd:plunge} of Theorem~\ref{thm:newmain} in terms of the dependence on $R$.

\subsection{Heuristic} 
All known estimates of the plunge region are governed by boundary geometry of the underlying domains. From these estimates, the heuristic seems to be that \emph{\(M_\varepsilon(T_{F,S})\) is controlled by the codimension 1 Hausdorff measure of the boundaries of \(F\) and \(S\)} up to polylogarithmic factors of \(\varepsilon\) and these measures. 
In dimensions \(d \geq 2\), when one of the domains is isotropically dilated by a factor of \(R>1\), this corresponds to an error term of size \(R^{d-1}\) up to additional polylogarithmic factors. 
The estimate \eqref{bd:plunge} of our main theorem is consistent with this heuristic when $d=2$, since it shows that \(M_\varepsilon(T_R)\) grows at most linearly in \(R\), up to polylogarithmic losses.



\subsection{Method and organization} 

As stated in the discussion following Theorem~\ref{thm:newmain}, without loss of generality, we may assume that the spatial domain $F(R)$ is the disk $D(R)$ of radius $R$. 
Our approach parallels the strategy used in the hypercube setting by the second and third authors of this paper in \cite{israelmayeli2023acha}. 
The key steps in \cite{israelmayeli2023acha} were 
\begin{itemize}
\item[Step~1:] 
Construct tensor--product wave packets adapted to the geometry of the hypercube to produce an orthonormal basis satisfying energy concentration estimates analogous to \eqref{eqn:energy1} and \eqref{eqn:residual_bd}. See Proposition~5.3 therein. 
\item[Step~2:] 
Use a frame--based eigenvalue counting lemma, converting energy concentration bounds for an orthonormal basis into estimates for the number of eigenvalues in the plunge--region. See Lemma~4.1 therein.
\end{itemize}

Theorem~\ref{thm2} plays the role of Step~1 and addresses the crucial analytic difficulty of constructing Fourier localized wave packets compatible with the geometry of the disk. This difficulty arises because the curved boundary $\partial D(R)$ introduces tangential localization effects that require boundary--adapted wave packets in polar coordinates. 
To construct the wave packet family in Theorem~\ref{thm2}, we introduce a Whitney--type radial--angular sectorization of \(D(R)\), employ smooth cutoffs, and define two subfamilies: 
\begin{itemize}
    \item 
    Interior packets with linear phase supported away from the boundary of $D(R)$, and 
    \item 
    Boundary packets whose phase is linear in polar coordinates (and hence nonlinear in Cartesian coordinates) supported near the boundary of $D(R)$. 
\end{itemize}
The smooth cutoffs are chosen to lie in a Gevrey class. This allows us to obtain nearly exponential decay of the Fourier transforms of the wave packets. See Lemmas~\ref{lem:typeI} and \ref{lem:type2_decay}. In turn, this Fourier decay is used to prove the desired energy concentration estimates. 

In contrast to the construction in \cite{israelmayeli2023acha}, the wave packets constructed in this paper do not form an orthonormal basis but instead form a unit norm frame for \(L^2(D(R))\) satisfying the bounds in Theorem~\ref{thm2}. 
This suffices for our purposes. Lemma~\ref{lem:Romero_lem} plays the role of Step~2, and applying it, Theorem~\ref{thm2} implies Theorem~\ref{thm:newmain}.

The paper is organized as follows. 
In \S\ref{sec:prelim} we fix notation, define and discuss well--shaped sets, recall basic properties of the spatio--spectral limiting operator $T_R=P_{D(R)}B_{S}P_{D(R)}$, and record the frame--theoretic tools used throughout. 

In \S\ref{wave-packet-section} we construct a wave packet system adapted to the disk $D(R)$. In particular, we introduce a Whitney--type radial--angular sectorization of the disk (\S\ref{sec:sector}), build Gevrey cutoffs adapted to this decomposition (\S\ref{sec:cutoffs}), and define interior packets versus boundary packets (\S\ref{defn:WP}). 
We conclude this section by proving Proposition~\ref{prop2}, which says that our system of wave packets forms a unit--norm frame for $L^2(D(R))$ with absolute frame bounds (\S\ref{subsec:frame_property}).
 
In \S\ref{sec:Fourier localization and energy concentration estimates} we establish the quantitative Fourier localization estimates that drive the spectral analysis.
We treat interior packets in \S\ref{sec:energy_type1} and boundary packets in \S\ref{sec:energy_type2}, leading to the energy concentration stated in Proposition~\ref{prop1}.

In \S\ref{sec:mainresult} we combine Proposition~\ref{prop1} with an eigenvalue counting lemma for positive contractions acting on a Hilbert space equipped with a unit--norm frame (Lemma~\ref{lem:Romero_lem}) to deduce the plunge--region bound in Theorem~\ref{thm:newmain}.

The appendices collect background on Gevrey classes and the decay of Fourier transforms of Gevrey functions (Appendix~\ref{appendix:Gevrey}), an explicit construction of the radial and angular cutoff families (Appendix~\ref{cutoffs}), and a table of notations used in the text (Appendix~\ref{sec:notations}).

\subsection{Acknowledgements}

We thank Martina Neuman for providing valuable comments on our paper.

\section{Definitions and notation}\label{sec:prelim}
Throughout, $X \lesssim Y$ means $X \leq CY$ for an absolute constant $C>0$ independent of $X,Y$, the parameter $R$ and of all auxiliary indices unless explicitly indicated. We write $X \approx Y$ when $X \lesssim Y$ and $Y \lesssim X$. By $X \lesssim_s Y$ and $X \approx_s Y$, we allow the implicit constants to depend on a parameter $s$.
For a measurable set $F \subset \mathbb{R}^2$, we denote by $\mathbf{1}_{F}$ its indicator function and by $|F|$ its Lebesgue measure.
A domain or body means a bounded, open, connected subset of the Euclidean plane \(\mathbb{R}^2\). 
Let \(\partial S\) denote the topological boundary of a set \(S \subset \mathbb{R}^2\). 
Let $\cH^1(\Gamma)$ denote the one--dimensional Hausdorff measure of a set $\Gamma \subset \R^2$. We write $\log x$ for the base-$2$ logarithm.

Theorem~\ref{thm:newmain} is stated and proved for well--shaped frequency domains which we define now. 
\begin{definition}\label{def:well--shaped}
A domain $S \subset \R^2$ is \emph{well--shaped} if its boundary $\partial S$ has finite $\cH^1$ measure, and if there exists a constant \(C>0\)  such that for all $t>0$, we have the Minkowski-type upper bound on the volume of the $t$-neighborhood of $\partial S$,
\begin{equation}\label{bd:well--shaped}
| \{ x \in \R^2 : \dist(x,\partial S) \leq t \} | \leq C \max\{ t^2, t \cH^1( \partial S)\}
.\end{equation} 
Throughout, \(\dist\) denotes the usual Euclidean distance. 
\end{definition}
\begin{remark}\label{rem:well-shaped-dilationinvariance}
The class of well–shaped domains is invariant under isotropic dilations with the same constant $C$. Indeed, if $a>0$, then $\partial(aS) = a\partial S$ and
$\cH^1(\partial(aS)) = a \cH^1(\partial S)$. By homogeneity of the distance function 
\[
\dist(x,\partial(aS))\le t \iff \dist(x/a,\partial S)\le t/a,
\]
so $\{x:\dist(x, \partial(aS))\le t\}=a\{y:\dist(y,\partial S)\le t/a\}$ and hence its area scales by $a^2$.
Applying \eqref{bd:well--shaped} to $S$ with $t/a$ gives
\[
|\{x:\dist(x,\partial(aS))\le t\}|
\le C\max\{t^2, t \cH^1(\partial(aS))\},
\]
and $aS$ is well--shaped with the same constant $C$.
\end{remark}

\begin{remark}
Variants of the well--shaped condition appear in many areas of mathematics. See \cite{Schmidt, Shparlinski, Kerr} for its use in number theory. 
Many important classes of domains satisfy the well--shaped condition; these include convex domains (\cite{Schmidt}), domains with maximally Ahlfors regular boundary (\cite{marceca2023} \S 
2), and domains with finite Lipschitz boundary (\cite{lang2013algebraic} Chapter~VI, \S 2). 
\end{remark}

\begin{lemma}[Lattice point counts for well-shaped domains] \label{lem:lattice_near_dilated_bd}
Let $S\subset\R^2$ be well--shaped with constant $C>0$ in \eqref{bd:well--shaped}. If $t \geq \sqrt{2}/2$, then 
\begin{equation}\label{eqn:lattice_near_dilated_bd}
\#\{\mathbf m\in\Z^2:\dist(\mathbf m,\partial S)\le t\}
\le 4C\max\{t^2, t \cH^1(\partial S)\}.
\end{equation}
\end{lemma}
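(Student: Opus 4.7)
The plan is to convert the lattice-point count into a volume estimate by thickening each lattice point into its surrounding unit square, and then apply the Minkowski-type bound \eqref{bd:well--shaped} in the definition of a well--shaped set.

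First I would observe that the unit cells $\mathbf m + [-1/2,1/2]^2$, as $\mathbf m$ ranges over $\Z^2$, tile the plane with pairwise disjoint interiors and unit area. For any $\mathbf m \in \Z^2$ and any point $y \in \mathbf m + [-1/2,1/2]^2$, the triangle inequality gives
\[
\dist(y,\partial S) \le \dist(\mathbf m,\partial S) + |y - \mathbf m| \le \dist(\mathbf m,\partial S) + \tfrac{\sqrt{2}}{2}.
\]
Hence, if $\dist(\mathbf m,\partial S) \le t$, then the entire unit cell $\mathbf m + [-1/2,1/2]^2$ lies in the thickened set $N_{t+\sqrt{2}/2}(\partial S) := \{x \in \R^2 : \dist(x,\partial S) \le t + \sqrt{2}/2\}$. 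By disjointness of the cells and the fact that each has area $1$,
\[
\#\{\mathbf m \in \Z^2 : \dist(\mathbf m,\partial S) \le t\}
\le \bigl|N_{t+\sqrt{2}/2}(\partial S)\bigr|.
\]

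Next I would apply the well--shaped bound \eqref{bd:well--shaped} with parameter $t' = t+\sqrt{2}/2$:
\[
\bigl|N_{t+\sqrt{2}/2}(\partial S)\bigr| \le C \max\bigl\{(t+\tfrac{\sqrt{2}}{2})^2,\ (t+\tfrac{\sqrt{2}}{2})\,\cH^1(\partial S)\bigr\}.
\]
Finally, under the hypothesis $t \ge \sqrt{2}/2$ we have $t + \sqrt{2}/2 \le 2t$, so $(t+\sqrt{2}/2)^2 \le 4t^2$ and $(t+\sqrt{2}/2)\cH^1(\partial S) \le 2t\,\cH^1(\partial S)$. Substituting gives the claimed bound
\[
\#\{\mathbf m \in \Z^2 : \dist(\mathbf m,\partial S) \le t\} \le 4C \max\{t^2,\ t\,\cH^1(\partial S)\}.
\]

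There is no real obstacle here; the argument is a routine packing estimate. The only subtle point is ensuring the $(t+\sqrt{2}/2)$-to-$2t$ comparison is valid, which is precisely why the hypothesis $t \ge \sqrt{2}/2$ is imposed and why the constant deteriorates from $C$ to $4C$.
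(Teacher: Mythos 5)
Your proposal is correct and is essentially the same argument as the paper's: both thicken each lattice point by a unit square, observe that the union of these squares lies in the $(t+\sqrt{2}/2)$-neighborhood of $\partial S$, apply the well--shaped bound \eqref{bd:well--shaped}, and use $t\geq\sqrt{2}/2$ to replace $t+\sqrt{2}/2$ by $2t$ at the cost of a factor $4$. The only cosmetic difference is that you phrase the step via disjointness of the unit cells, while the paper phrases it as $|E+[-1/2,1/2]^2|=\#E$; these are the same observation.
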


\begin{proof}
Set 
\[
E:=\{\mathbf m\in\Z^2:\dist(\mathbf m,\partial S)\le t\}.
\]
Since $[-1/2,1/2]^2\subset B(0,\sqrt2/2)$, we have the inclusion 
\[
E+[-1/2,1/2]^2 \subset \{x\in\R^2:\dist(x,\partial S)\le t+\tfrac{\sqrt2}{2}\}.
\]
Taking Lebesgue measure and using $|E+[-1/2,1/2]^2|=\#E$ gives the inequality
\[
\#E \le \Big|\{x\in\R^2:\dist(x,\partial S)\le t+\tfrac{\sqrt2}{2}\}\Big|.
\]
Applying \eqref{bd:well--shaped} with constant \(C\), we obtain 
\[
\#E 
\le C\max\Big\{(t+\tfrac{\sqrt2}{2})^2, (t+\tfrac{\sqrt2}{2}) \cH^1(\partial S) \Big\}.
\]
If $t\ge \sqrt2/2$, then $t+\sqrt2/2\le 2t$ and \eqref{eqn:lattice_near_dilated_bd} follows.
\end{proof}

For $f \in L^1(\mathbb{R}^2)\cap L^2(\mathbb{R}^2)$ we use the Fourier transform
\begin{equation}\label{def:FT}
\widehat{f}(\xi) = \frac{1}{2\pi} \int_{\mathbb{R}^2} f(x)e^{- i x\cdot \xi}dx,
\qquad
f(x) = 
\frac{1}{2\pi}\int_{\mathbb{R}^2} \widehat{f}(\xi)e^{i x\cdot \xi} \,d\xi,
\end{equation}
extended to $L^2$ by density. With this convention, Plancherel's Theorem holds: $\|f\|_{2}=\|\widehat f\|_{2}$. 
Similarly, if $\phi \in L^2(\R)$ we denote 
\[
\widehat{\phi}(\omega) = \frac{1}{\sqrt{2 \pi}} \int_\R \phi(x) e^{- i x \cdot \omega} dx,
\quad \phi(x) = \frac{1}{\sqrt{2 \pi}} \int_\R \widehat{\phi}(\omega) e^{i x \cdot \omega} d \omega.
\]

A critical aspect of our analysis is the use of Gevrey functions. We now recall their definition. 
\begin{definition}[Gevrey--\(s\) functions (\cite{Hor2,Rod1})]
For a fixed \(s>1\) and subset \(K \subseteq \R^d\), a $C^\infty$ function $\phi : K \rightarrow \R$ is \emph{Gevrey--$s$} on \(K\) if there exist constants $A,B > 0$ so that for all multi--indices $\alpha \in \Z_{\geq 0}^d$, 
\begin{equation}
\| \partial^\alpha \phi \|_{L^\infty(K)} \leq A B^{|\alpha|} (\alpha !)^s
\end{equation}
where $|\alpha| = \sum_j \alpha_j$ is the order of $\alpha$, and $\alpha! = \prod_j \alpha_j !$. We refer to $(A,B)$ as the Gevrey--$s$ constants of the function $\phi$, and \(K\) as its domain.
\end{definition}
If we do not specify the domain \(K\) in the above definition, then it is assumed to be \(\R^d\).

\begin{definition}\label{def:Gevrey:scale}
A family of functions $\{f_\nu\}$ in a Hilbert space $\cH$ is a \emph{frame} for $\cH$ provided that 
\[
A \| f \|^2 \leq \sum_\nu |\langle f, f_\nu \rangle |^2 \leq B \| f \|^2 \mbox{ for all } f \in \cH.
\] 
Here, $0 < A \leq B < \infty$ are called frame constants for $\{ f_\nu \}$. 
If $\| f_\nu \| = 1$ for all $\nu$ then $\{f_\nu\}$ is called a \emph{unit norm frame}. 
If $A=B$ then $\{f_\nu\}$ is called a \emph{tight frame}. 
A tight frame is \emph{Parseval} if $A=B=1$. 
\end{definition}
In this language, an orthonormal basis for $\cH$ is a unit norm Parseval frame for $\cH$. We use the following elementary result from frame theory, which can be found in classical texts on the subject; for example, see \cite{grochenig2001foundations,christensen2003introduction}. 
\begin{lemma}\label{lem:frame_theory}
Suppose $T : \cH_1 \rightarrow \cH_2$ is a bounded invertible linear operator and $T^{-1} : \cH_2 \rightarrow \cH_1$ is bounded. If $\{f_\nu\}$ is a frame for $\cH_1$ with frame constants $(A,B)$, then $\{ T f_\nu \}$ is a frame for $\cH_2$ with frame constants $(\| T^{-1} \|^{-2} A,\| T \|^2 B)$.
\end{lemma}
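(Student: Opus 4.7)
The plan is to reduce everything to the frame inequalities for $\{f_\nu\}$ in $\cH_1$ by applying the Hilbert-space adjoint $T^\ast:\cH_2\to\cH_1$ to the test vector. The key identity is $\langle g, T f_\nu\rangle_{\cH_2} = \langle T^\ast g, f_\nu\rangle_{\cH_1}$ for every $g\in\cH_2$, so that
\[
\sum_\nu |\langle g, T f_\nu\rangle_{\cH_2}|^2 = \sum_\nu |\langle T^\ast g, f_\nu\rangle_{\cH_1}|^2.
\]
Since $\{f_\nu\}$ is a frame for $\cH_1$ with constants $(A,B)$, the right-hand side is sandwiched between $A\|T^\ast g\|^2$ and $B\|T^\ast g\|^2$. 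It then remains to convert bounds on $\|T^\ast g\|$ into bounds on $\|g\|$.

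For the upper frame bound, I would use $\|T^\ast\| = \|T\|$ to get $\|T^\ast g\|^2 \leq \|T\|^2 \|g\|^2$, producing the constant $\|T\|^2 B$. For the lower frame bound, I would use that $T^\ast$ is invertible with $(T^\ast)^{-1} = (T^{-1})^\ast$, so $\|(T^\ast)^{-1}\| = \|T^{-1}\|$. Writing $g = (T^\ast)^{-1} T^\ast g$ and taking norms yields $\|g\| \leq \|T^{-1}\| \cdot \|T^\ast g\|$, hence $\|T^\ast g\|^2 \geq \|T^{-1}\|^{-2} \|g\|^2$, producing the constant $\|T^{-1}\|^{-2} A$. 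Combining these gives the claimed frame constants.

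There is essentially no main obstacle here: the argument is the standard transport of a frame through a bounded invertible operator. The only subtlety worth stating cleanly in the write-up is the justification that $(T^\ast)^{-1} = (T^{-1})^\ast$ and that this operator has the same norm as $T^{-1}$, which is immediate from taking adjoints on both sides of $T T^{-1} = I_{\cH_2}$ and $T^{-1} T = I_{\cH_1}$.
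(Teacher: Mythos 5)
Your proof is correct and is the standard argument for transporting a frame through a bounded invertible operator; the paper itself does not supply a proof of this lemma but simply cites it as an elementary result from the frame-theory literature, so there is nothing to compare against beyond the textbook version, which yours matches.
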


\section{Wave packets on the disk}\label{wave-packet-section}

In this section, we collect the technical and geometrical ingredients that we need to build the wave packets on the disk $D(R)$ of radius $R>1$. 
We first break the domain $D(R)$ into manageable pieces by a radial--angular sectorization (\S\ref{sec:sector}), and we label
sectors according to whether they are far from the boundary or close to the boundary of $D(R)$ (interior sectors vs. boundary sectors).
Next, we introduce the smooth (Gevrey) cutoff functions adapted to this partition (\S\ref{sec:cutoffs}); these cutoffs are what make the
later Fourier localization estimates work cleanly.
With the geometry and cutoffs in place, we then define the disk--adapted wave packets (\S\ref{defn:WP}) and prove that they form a
unit norm frame for $L^2(D(R))$ (\S\ref{subsec:frame_property}, Proposition~\ref{prop2}).
After this, the remaining task is essentially quantitative: we show that most of the wave packets are well concentrated in
frequency, and then turn that concentration into eigenvalue counts.

\subsection{A Whitney--type partition of the disk
} \label{sec:sector} 
We proceed with a Whitney--type decomposition of the disk \(D(R)\) in radial and angular coordinates. 
Without loss of generality, we assume throughout that $R \geq 2$ is dyadic. Thus, $R=2^{j_{\max}}$ for an integer $j_{\max} \geq 1$. See the comment following Theorem~\ref{thm:newmain}. 

For $0\le j\le j_{\max}$, partition $S^1=[0,2\pi)$ into $m(j):=2^{j_{\max}-j}$ equal arcs
\[
\Theta_{j,k} := \Big[\tfrac{2\pi(k-1)}{m(j)},\tfrac{2\pi k}{m(j)}\Big)
\quad \textrm{for} \quad 
k=1,\dots,m(j).
\]
For $j<0$, fix $m(j):=2^{j_{\max}}$ and define $\Theta_{j,k} := \Theta_{0,k}$ for $k=1,\dots,m(j)$. Define the \emph{sectors} 
\begin{equation}\label{eqn:sector1}
S_{j,k}:=\{x \in \R^2 : R-2^j\le |x|\le R-2^{j-1}, \arg(x) \in\Theta_{j,k}\}, \quad j \le j_{\max}, 1 \leq k \leq m(j).
\end{equation}
Up to sets of Lebesgue measure zero, \(\{S_{j,k}\}\) forms a decomposition of \(D(R)\); any two distinct sectors intersect on a set of Lebesgue measure zero.

We classify the sectors into two types: 
We call  \(S_{j,k}\) an \textbf{interior sector} if \(j\ge 0\) 
(hence $S_{j,k}$ is contained in the closure of $D(R-\tfrac12)$),  and a \textbf{boundary sector} if \(j<0\) (hence $S_{j,k}$ is contained in $ D(R) \setminus D(R-\tfrac12)$, i.e., near the boundary). 
See Fig. \ref{fig:radialDecomp}. 
On interior sectors we use linear--phase wave packets; on boundary sectors, we use nonlinear--phase wave packets (see \S \ref{defn:WP}).

\begin{figure}[!t]
\centering
\includegraphics[width=.28\linewidth]{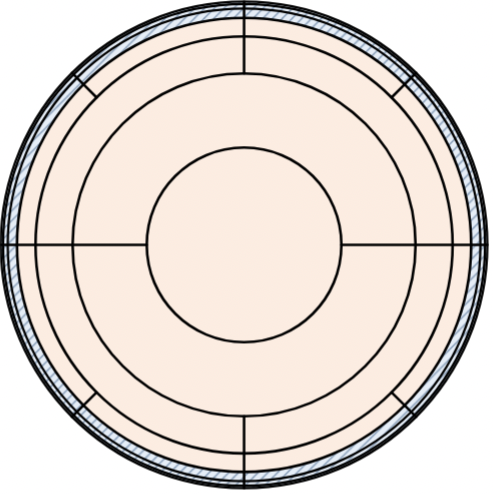}
\caption{Radial–angular sectorization of $D(R)$. Beige: interior sectors; Blue-Gray: boundary sectors. Boundary sectors have a fixed angular width.}
\label{fig:radialDecomp}
\end{figure}

\subsection{Radial and Angular Cutoff Functions}\label{sec:cutoffs}

Fix $s>1$. Choose a family of non--negative radial cutoffs \(\{\phi_j\}_{j\le j_{\max}}\subset C^\infty([0,R])\) such that
\begin{itemize}
\item[(R1)] 
$\sum_{j\le j_{\max}} \phi_j(r)^2=1$ for all $r \in [0,R)$,
\item[(R2)] 
$\supp(\phi_j) \subset I_j:=\big[R-1.12^j, R-0.92^{j-1}\big]\cap[0,R]$,
\item[(R3)] $\| \partial^k_r \phi_j \|_{L^\infty} \leq C_1 C_2^k (k!)^s 2^{-jk}$ for all $k \geq 0$,
\end{itemize} 
where $C_1,C_2 > 0$ are absolute constants depending on $s$ but not on $j$. For the construction of these cutoffs, see Lemma \ref{lem:radial-cutoffs} in Appendix \ref{cutoffs}. 
Condition (R2) implies that only nearest--neighbour overlaps occur:
\[
\supp(\phi_j) \cap \supp(\phi_{j'}) \neq \varnothing
\iff |j-j'|\le 1.
\]
Furthermore, the properties (R1) and (R2) imply that
\begin{equation}\label{eqn:radcutoff1}
\phi_{j_{\max}}(r) = 1 \mbox{ for } r \in [0,R/4]
\end{equation}
and
\[
\phi_j(r) = 1 \mbox{ for } r \in I_j^0 := [R - 0.8 \cdot 2^j, R - 1.2 \cdot 2^{j-1}], \quad j < j_{\max}.
\]
To see this, it suffices to note that $\supp(\phi_{j'})$ is disjoint from $I_j^0$ when $j' \neq j$ due to (R2). Thus, $\phi_j \equiv 1$ on $I_j^0$ due to (R1). Furthermore, (R2) implies that for $j < j_{\max}$, $\supp(\phi_j) \subset [0.45  R, R]$. Therefore, (R1) implies that $\phi_{j_{\max}} \equiv 1$ on $[0, 0.45 R]$.

For each $0\le j\le j_{\max}$, choose a family of non--negative angular cutoffs $\{\eta_{j,k}\}_{1 \leq k \leq m(j)}\subset C^\infty(S^1)$ with
 
\begin{itemize}
\item[(A1)] 
$\sum_{k=1}^{m(j)}\eta_{j,k}(\theta)^2=1$ for all \(\theta \in S^1\),
\item[(A2)] 
$\supp(\eta_{j,k})\subset \Theta_{j,k}^\ast$ where $\Theta_{j,k}^\ast := \Big[\frac{2\pi(k-1)}{m(j)}-0.05\frac{2\pi}{m(j)}, \frac{2\pi k}{m(j)}+0.05\frac{2\pi}{m(j)} \Big)$,
\item[(A3)] $\| \partial^k_\theta \eta_{j,k} \|_{L^\infty} \leq C_1 C_2^k (k!)^s m(j)^{k}$.
\end{itemize}
Here, we recall that $m(j) = 2^{j_{\max}-j} \geq 1$. We identify the interval $\Theta_{j,k}^\ast$ in $\R$ with an interval in $S^1$ through the periodization map $\R \mapsto S^1 = \R /(2 \pi \Z)$. For the construction of such cutoffs, see Lemma \ref{lem:angular-cutoffs} in Appendix \ref{cutoffs}. 

Observe that in the case $j=j_{\max}$, where $m(j) = 1$, condition (A1) implies that
\begin{equation}\label{prop:eta}
\eta_{j_{\max},1} \equiv 1 \mbox{ on } S^1.
\end{equation}
Also, observe that for any $j \le j_{\max}$ and $1 \leq k \leq m(j)$, $\Theta_{j,k}^\ast$ contains $\Theta_{j,k}$ and 
\[
|\Theta_{j,k}^\ast|=1.1|\Theta_{j,k}| = 1.1 \frac{2 \pi}{m(j)}.
\]

\subsection{Wave packet systems on \(L^2(D(R))\)}\label{defn:WP}

For \(0\le j\le j_{\max}\), $1 \leq k \leq m(j)$, $\mathbf{m} = (m_1,m_2) \in \Z^2$, define the interior wave packets (with linear phase)
\begin{equation}\label{eq:type1wave}
\psi_{j,k,\mathbf{m}}(x)
= C_{j,k} 2^{-j}\phi_j(r)\eta_{j,k}(\theta)
e^{i c_\ast 2^{-j} (m_1 x_1 + m_2 x_2)},
\end{equation}
where $r = |x|$ and $\theta=\operatorname{arg}(x)$ are the polar coordinates of $x$, and \(c_\ast>0\) is sufficiently small, to be determined later. 
For \(j<0\), $1 \leq k \leq m(j)$, $\mathbf{m} \in \Z^2$, define the boundary wave packets (with nonlinear phase)
\begin{equation}\label{eq:type2wave}
\psi_{j,k,\mathbf{m}}(x)
= C_{j,k} 2^{-j/2}\phi_j(r)\eta_{0,k}(\theta)
e^{i(1/2) (m_1 2^{-j} r + m_2 R \theta)}.
\end{equation}
We define $c_\ast$ in \eqref{defn:cstar} below. 
Interior wave packets have a plane--wave phase $\psi(x)= i c_\ast 2^{-j} (m_1 x_1 + m_2 x_2)$ that is linear in $x$, while for boundary wave packets, the phase is linear in polar coordinates $(r,\theta)$ but nonlinear in Cartesian coordinates. 

The constants $C_{j,k}$ are picked to ensure that $\| \psi_{j,k,\mathbf{m}} \|_{L^2} = 1$ and satisfy
\begin{equation}\label{eqn:const_bds}
 0 < c \leq C_{j,k} \leq C
\end{equation}
with $c,C$ absolute constants. To see that these normalization constants are uniformly bounded above and below it suffices to note that the measure of the support of the interior wave packet in \eqref{eq:type1wave} is $\approx 2^{2j}$ for $0 \leq j \leq j_{\max}$, and the measure of the support of the boundary wave packet in \eqref{eq:type2wave} is $\approx 2^{j}$ for $j < 0$. Meanwhile, the cutoff function $\phi_j(r) \eta_{j,k}(\theta)$ has $L^\infty$ norm $\leq 1$ while $\phi_j(r) \eta_{j,k}(\theta)$ is $\approx 1$ on a set of proportional measure to its support. The condition \eqref{eqn:const_bds} follows easily from these assertions.

Denote the indexing set for the wave packets by
\begin{equation}\label{eqn:indexset}
\cI = \{ \nu = (j,k,\mathbf{m}) : j \leq j_{\max}, 1 \leq k \leq m(j) := \min\{ 2^{j_{\max} - j},2^{j_{\max}}\}, \mathbf{m} \in \Z^2\}.
\end{equation}
Split $\mathcal I$ into two disjoint subsets,
 \[
\Iint := \{\nu \in \mathcal I : 0 \le j \le j_{\max}\}, 
\qquad
\Ibdry := \{\nu \in \mathcal I : j < 0\}.
\]
Using \eqref{eqn:sector1}, we see that each sector $S_{j,k}$ sits inside the enlarged sector 
\[
S_{j,k}^\ast := \{ x \in \R^2 : |x| \in I_j=[ R- 1.1 \cdot 2^j, R - 0.9 \cdot 2^{j-1}], \arg(x) \in \Theta_{j,k}^\ast\}.
\]
Due to the support properties of the radial and angular cutoff functions (conditions (R2) and (A2) in Section \ref{sec:cutoffs}), we have 
\begin{equation}\label{wp_support}
\supp(\psi_{j,k,\mathbf{m}}) \subset S_{j,k}^\ast \subset D(R).
\end{equation}
In particular, $\{\psi_{j,k,\mathbf{m}}\}$ is a family of $C^\infty$ functions supported on $D(R)$.

\subsection{Frame property of the wave packet family}\label{subsec:frame_property}

Recalling the definition of frames in Section \ref{sec:prelim}, we prove the following result. 

\begin{proposition}\label{prop2}
For an absolute constant \(c_\ast \in (0,1)\) , the family \(\{\psi_{j,k,\mathbf{m}}\}\) forms a unit norm frame for \(L^2(D(R))\) with frame bounds $0 < A < B < \infty$ that are absolute constants independent of $s$ and $R$.
\end{proposition}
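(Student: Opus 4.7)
The plan is to prove the frame bound one sector at a time. For each fixed $(j,k)$ I will show that the exponential factor in $\psi_{j,k,\mathbf{m}}$, indexed by $\mathbf{m}\in\Z^2$, is (up to normalization) a Fourier orthonormal basis on an ambient rectangle large enough to contain $S_{j,k}^{\ast}$. One application of Parseval will then reduce $\sum_{\mathbf{m}}|\langle f,\psi_{j,k,\mathbf{m}}\rangle|^2$ to a weighted $L^2$--norm of $f\phi_j\eta_{j,k}$. Summing in $k$ via (A1) and then in $j\le j_{\max}$ via (R1) will collapse the full frame sum to a constant multiple of $\|f\|_{L^2(D(R))}^2$. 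The unit norm of each packet is already built into the choice of $C_{j,k}$, and uniform frame constants will follow from \eqref{eqn:const_bds} together with absolute choices of $c_{\ast}$ and rectangle sizes.

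For an interior packet ($0\le j\le j_{\max}$), the first step is the geometric observation that $S_{j,k}^{\ast}$ has Euclidean diameter $\lesssim 2^j$ uniformly in $k$: the radial extent is $\le 1.1\cdot 2^j$ and, at radius $R$, the arc $\Theta_{j,k}^{\ast}$ has length $R\,|\Theta_{j,k}^{\ast}| = 1.1\cdot 2\pi\cdot 2^j$, using $R/m(j)=2^j$ for $j\ge 0$. I will then fix $c_{\ast}\in(0,1)$ small enough (an absolute constant) so that each $S_{j,k}^{\ast}$ fits, after translation, inside an axis-aligned square $Q_{j,k}$ of side $L_j := 2\pi\cdot 2^j/c_{\ast}$. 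On $Q_{j,k}$ the family $\{L_j^{-1}\,e^{ic_{\ast}2^{-j}\,\mathbf{m}\cdot x}\}_{\mathbf{m}\in\Z^2}$ is a Fourier orthonormal basis. Setting $g_{j,k}(x):=f(x)\phi_j(|x|)\eta_{j,k}(\arg x)$, Parseval will give
\begin{equation*}
\sum_{\mathbf{m}\in\Z^2}|\langle f,\psi_{j,k,\mathbf{m}}\rangle|^2 \;=\; (2\pi/c_{\ast})^2\,C_{j,k}^2\,\|g_{j,k}\|_{L^2}^2,
\end{equation*}
and summing in $k$ via (A1) will produce a total of $\approx \int|f|^2\phi_j^2\,dx$.

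For a boundary packet ($j<0$), I will switch to polar coordinates. Writing $F(r,\theta):=f(r\cos\theta,r\sin\theta)$, $H_{j,k}(r,\theta):=F(r,\theta)\phi_j(r)\eta_{0,k}(\theta)\,r$, and using $dx=r\,dr\,d\theta$,
\begin{equation*}
\langle f,\psi_{j,k,\mathbf{m}}\rangle = C_{j,k}\,2^{-j/2}\iint H_{j,k}(r,\theta)\,e^{-i(m_1 2^{-j}r+m_2 R\theta)/2}\,dr\,d\theta.
\end{equation*}
After matching frequencies, $\{e^{i(m_1 2^{-j}r+m_2 R\theta)/2}\}_{\mathbf{m}}$ is (up to normalization) a Fourier ONB on any $(r,\theta)$-rectangle of size $4\pi\cdot 2^j\times 4\pi/R$; such a rectangle contains $I_j\times\Theta_{0,k}^{\ast}$ because $|I_j|=0.65\cdot 2^j$ and $|\Theta_{0,k}^{\ast}|=2.2\pi/R$. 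Parseval will then give
\begin{equation*}
\sum_{\mathbf{m}\in\Z^2}|\langle f,\psi_{j,k,\mathbf{m}}\rangle|^2 = \frac{16\pi^2}{R}\,C_{j,k}^2\int_{\R^2}|f|^2\phi_j^2\eta_{0,k}^2\,|x|\,dx,
\end{equation*}
and since $|x|\in[R-0.55,R]\subset[R/2,R]$ on $\supp\phi_j$ for $j<0$ (using $R\ge 2$), the $|x|/R$ factor is bounded between absolute constants. Summing in $k$ via (A1), then in $j<0$, and combining with the interior sum via (R1) will produce $\sum_\nu|\langle f,\psi_\nu\rangle|^2 \approx \|f\|_{L^2(D(R))}^2$; absolute frame bounds then follow from \eqref{eqn:const_bds}.

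I expect the main obstacle to be the mismatch between the Euclidean inner product on $L^2(D(R))$ and the polar-coordinate structure of the boundary packets: the polar Jacobian $r$ must be absorbed without contaminating the frame bounds. This is possible precisely because the radial supports $\supp\phi_j$ for $j<0$ all sit in an annulus of thickness bounded by an absolute constant, so $r\approx R$ there. A secondary point, checked from the sector geometry, is that $c_{\ast}$ can be chosen as a single absolute constant valid at every scale $0\le j\le j_{\max}$, since $\mathrm{diam}(S_{j,k}^{\ast})/2^j$ is bounded above uniformly in $j,k$.
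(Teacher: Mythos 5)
Your proposal is correct and follows essentially the same strategy as the paper: decompose $\|f\|^2$ sector by sector via (R1)/(A1), apply Parseval on an ambient rectangle containing each $S_{j,k}^\ast$ (square of side $\approx 2^j$ for interior packets, $(r,\theta)$-box of size $\approx 2^j \times 1/R$ for boundary packets), and absorb the polar Jacobian $r\approx R$ on boundary sectors. The only difference is cosmetic: where the paper invokes Lemma~\ref{lem:frame_theory} to convert a norm equivalence into a frame equivalence for the weighted space $L^2(B_{j,k},r\,dr\,d\theta)$, you instead compute the Parseval sum for $H_{j,k}=F\phi_j\eta_{0,k}\,r$ directly and then bound the resulting weight $|x|/R$ by absolute constants — the same estimate, slightly more explicit.
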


\begin{remark}[Independence from the Gevrey parameter]
 The frame bounds in Proposition~\ref{prop2} are uniform in the Gevrey index $s$, since 
the proof uses only the exact partition of unity
$\sum_{j,k}\phi_j(r)^2\eta_{j,k}(\theta)^2=1$
and the orthonormality of the exponential basis on each bounding box.
\end{remark}

\begin{proof} 
According to conditions (R1) and (A1) in Section \ref{sec:cutoffs},
\[
\sum_{j,k} \phi_j(r)^2\eta_{j,k}(\theta)^2 = 1,
\]
which implies that for any $f \in L^2(D(R))$,
\begin{equation}\label{eqn:sector_decomp}
\|f\|_{L^2(D(R))}^2
= \int_{D(R)} |f|^2 \!\left(\sum_{j,k} \phi_j(r)^2 \eta_{j,k}(\theta)^2\right)\!dx
= \sum_{j,k} \| f \phi_j \eta_{j,k} \|_{L^2(D(R))}^2,
\end{equation}
where $r=|x| \in [0,R]$ and $\theta = \arg(x) \in [0,2\pi)$.

\medskip
\noindent\emph{Frame property of interior wave packets \((0 \le j \le j_{\max})\).}
The function \(\phi_j(|x|) \eta_{j,k}(\arg(x))\) is supported on a sector
\(S_{j,k}^\ast\subset D(R)\). Observe that the sector $S_{j,k}^\ast$ has radial extent $\Delta r \approx 2^j$ and angular extent $\Delta \theta \approx 2^j/R$. Because $x_1 = r \cos \theta$ and $x_2 = r \sin \theta$ the change in $x_1$ within the sector $S_{j,k}^\ast$ satisfies $\Delta x_1 \lesssim R \Delta \theta + \Delta r \lesssim 2^j$. Similarly, the change in $x_2$ within $S_{j,k}^\ast$ satisfies $\Delta x_2 \lesssim 2^j$. Therefore, $S_{j,k}^\ast$ is contained in a square \(T_{j,k}\) of side length
\(C_\ast 2^j\) for an absolute constant \(C_\ast>1\). We define
\begin{equation}\label{defn:cstar}
c_\ast = 2 \pi/C_\ast.
\end{equation}
Consider the orthonormal Fourier basis for \(L^2(T_{j,k})\): 
\[
\bigl\{ C_\ast^{-1} 2^{-j} e^{2\pi i2^{-j}\langle \mathbf{m},x\rangle / C_\ast} : \mathbf{m}\in\Z^2 \bigr\}. 
\]
 By Plancherel’s theorem,
\[
\begin{aligned}
\| f\phi_j\eta_{j,k} \|^2
&= \sum_{\mathbf{m}\in\Z^2}
\bigl|\bigl\langle f\phi_j\eta_{j,k},
C_\ast^{-1} 2^{-j} e^{2\pi i2^{-j}\langle \mathbf{m},x\rangle / C_\ast}\bigr\rangle\bigr|^2 
\\
&= \sum_{\mathbf{m}\in\Z^2}
\bigl|\bigl\langle f,
C_\ast^{-1} \phi_j\eta_{j,k} 2^{-j} e^{2\pi i2^{-j}\langle \mathbf{m},x\rangle / C_\ast}\bigr\rangle\bigr|^2 
\\
&= C_\ast^{-2} \sum_{\mathbf{m}\in\Z^2} |\langle f, C_{j,k}^{-1} \psi_{j,k,\mathbf{m}}\rangle|^2,
\end{aligned}
\]
where \(\psi_{j,k,\mathbf{m}}\) are the interior wave packets from \eqref{eq:type1wave} with $c_\ast = 2 \pi C_\ast^{-1}$, and $\| \cdot \|$ is the standard $L^2$ norm. Because $0 < c \leq C_{j,k} \leq C$ for absolute constants $c,C$, we obtain
\begin{equation}\label{eqn:type1_frame}
 c_0 \| f \phi_j \eta_{j,k} \|^2 \leq \sum_{\mathbf{m} \in \Z^2} |\langle f, \psi_{j,k,\mathbf{m}} \rangle|^2 \leq C_0 \| f \phi_j \eta_{j,k} \|^2
\end{equation}
for absolute constants $c_0,C_0 > 0$.

\medskip

\noindent\emph{Frame property of boundary wave packets \((j<0)\).}
The radial cutoff \(\phi_j(r)\) is supported on
\[
I_j = \bigl[R-1.1\cdot 2^j, R-0.9\cdot 2^{j-1}\bigr],
\]
where $|I_j| = (0.65) 2^j$, and the angular cutoff \(\eta_{0,k}(\theta)\) is supported on an interval \(\Theta_{0,k}^\ast \subset S^1\) with
\(|\Theta_{0,k}^\ast| = 2.2 \cdot \pi/R \). 
Define the rectangular box \(B_{j,k}=I_j\times \Theta_{0,k}^\ast\) in \((r,\theta)\)--coordinates, and define the enlarged box $\widetilde{B}_{j,k} = \widetilde{I}_j \times \widetilde{\Theta}_{0,k} \supset B_{j,k}$ such that $\widetilde{I}_j \supset I_j$, $\widetilde{\Theta}_{0,k}\supset \Theta_{0,k}^\ast$ are intervals chosen by dilating $I_j$ and $\Theta_{0,k}^\ast$ about their centers, with $|\widetilde{I}_j| = 4 \pi \cdot 2^j$, $|\widetilde{\Theta}_{0,k}| = 4 \pi/R$. Define an exponential system $\cB$ as 
\[
\cB=\{
h_{\bf m}(r,\theta)= (4 \pi)^{-1} \cdot 2^{-j/2} R^{1/2}
e^{i (1/2) (m_1 2^{-j} r + m_2 R \theta)}: {\bf m}=(m_1,m_2)\in\mathbb{Z}^2
\}.
\]
Observe that $\cB$ is the orthonormal Fourier basis on $L^2(\widetilde{B}_{j,k}, drd\theta)$.

Identify $L^2(B_{j,k},dr\,d\theta)$ with the closed subspace of $L^2(\widetilde{B}_{j,k},drd\theta)$ consisting of functions supported in $B_{j,k}$ via extension by zero. Since $\cB$ is an orthonormal basis for $L^2(\widetilde{B}_{j,k},dr\,d\theta)$, we can restrict Parseval's identity to the subspace $L^2(B_{j,k},dr\,d\theta)$ to obtain 
\[
\| f \|^2_{L^2(B_{j,k}, dr d \theta)} = \sum_{{\bf m}\in \mathbb{Z}^2} \bigl| \langle f, h_{\bf m} \rangle_{L^2(B_{j,k}, dr d \theta)} \bigr|^2, \quad f \in L^2(B_{j,k}, dr d \theta).
\]

Consider the Hilbert spaces $\cH_1 = L^2(B_{j,k},dr\,d\theta), \cH_2=L^2(B_{j,k},r\,dr\,d\theta)$. Because $cR \leq r \leq CR$ for $(r, \theta) \in B_{j,k}$, the norms are equivalent
\[ c R \|f\|_{\mathcal H_1}^2\leq \|f\|_{\mathcal H_2}^2 \leq CR \|f\|_{\mathcal H_1}^2.
\]
Thus, the identity operator $I : \cH_1 \rightarrow \cH_2$ satisfies $\| I \|_{\cH_1 \rightarrow \cH_2} \leq (CR)^{1/2}$ and $\|I^{-1} \|_{\cH_2 \rightarrow \cH_1}^{-1} \geq (c R)^{1/2}$. By Lemma \ref{lem:frame_theory}, $\{ h_{\bf m} \}_{{\bf m}\in \mathbb{Z}^2}$ is a frame for $L^2(B_{j,k}, r dr d \theta)$ with frame constants $(cR, CR)$. Writing down this frame inequality and dividing through by $R^{-1}$ gives 
\[
c \|f\|_{L^2(B_{j,k},r\,dr\,d\theta)}^2
\leq 
\sum_{{\bf m}}\bigl|\langle f, R^{-1/2} h_{\bf m}\rangle_{L^2(B_{j,k}, r dr d \theta) }\bigr|^2
\leq 
C \|f\|_{L^2(B_{j,k},r\,dr\,d\theta)}^2 .
\]
Thus, $\{R^{-1/2}h_{\bf m}\}$ is a frame for $L^2(B_{j,k},r\,dr\,d\theta)$.

Apply this to \(f\phi_j\eta_{0,k}\), and use that
\(L^2(B_{j,k},r\,dr\,d\theta)\) is isometric to \(L^2(S_{j,k}^\ast,dx)\) under the
polar change of variables. We obtain
\[
c\|f\phi_j\eta_{0,k}\|_{L^2(S_{j,k}^\ast)}^2
\leq 
\sum_{{\bf m}}
\bigl|\langle f, R^{-1/2}\phi_j\eta_{0,k}h_{\bf m}\rangle_{L^2(S_{j,k}^\ast)}\bigr|^2
\leq 
C\|f\phi_j\eta_{0,k}\|_{L^2(S_{j,k}^\ast)}^2 .
\]
Observe that
\[
R^{-1/2}\phi_j(r)\eta_{0,k}(\theta)h_{\bf m}(r,\theta)
= (4 \pi C_{j,k})^{-1} \psi_{j,k,\mathbf{m}},
\]
where \(\psi_{j,k,\mathbf{m}}\) is the boundary wave packet defined in \eqref{eq:type2wave}. Here, $0 < c \leq C_{j,k} \leq C$ for absolute constants $c,C$. Thus,
\begin{equation} \label{eqn:type2_frame}
c_1 \|f\phi_j\eta_{0,k}\|^2
\leq 
\sum_{{\bf m}}
\bigl|\langle f,\psi_{j,k, \mathbf{m}} \rangle\bigr|^2
\leq 
C_1 \|f\phi_j\eta_{0,k}\|^2
\end{equation}
for absolute constants $c_1,C_1$.

Combining the interior and boundary contributions by adding \eqref{eqn:type1_frame}, \eqref{eqn:type2_frame} over all $j,k$, using \eqref{eqn:sector_decomp}, we conclude that
\(\{\psi_{j,k,\mathbf{m}}\}\) forms a frame for \(L^2(D(R))\) when $c_\ast = 2 \pi /C_\ast$.
\end{proof}
\section{Fourier localization and energy concentration estimates}\label{sec:Fourier localization and energy concentration estimates}
This section is dedicated to the proof of the following result, which will be used to complete the proof of Theorem \ref{thm:newmain}.

\begin{proposition}[Energy concentration of wave packets]\label{prop1}
Fix a dyadic integer $R \geq 2$. The wave packet family \(\{\psi_\nu\}\) satisfies energy concentration estimates of order \( \varepsilon\) on \(D(R)\times S\) with at most \(CR\log(R/ \varepsilon)^{1+2s}\) residual packets. Precisely: for each \( \varepsilon \in (0,1/2)\) there is a partition of the index set \(\cI=\cI_1\cup \cI_2\cup \cI_3\) such that
\[
\sum_{\cI_1}\|\widehat{\psi_\nu}\|_{L^2(S)}^2
+
\sum_{\cI_2}\|\widehat{\psi_\nu}\|_{L^2(\R^2\setminus S)}^2
\le \varepsilon^2,
\]
while \(\#\cI_3 \le C R\log(R/ \varepsilon)^{1+2s}\). Here, $C$ is a constant determined by $s$ and the geometric parameters of $S$.
\end{proposition}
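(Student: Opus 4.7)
\medskip
\noindent\textbf{Proof Proposal for Proposition \ref{prop1}.}

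The plan is to attach to each wave packet $\psi_\nu$ a \emph{center frequency} $\xi_\nu \in \R^2$, and then to sort the packets according to the location of $\xi_\nu$ relative to $\partial S$, measured in an appropriate anisotropic metric dictated by the frequency uncertainty of $\psi_\nu$. For an interior packet with index $(j,k,\mathbf m)$, the linear phase makes this transparent: we take $\xi_\nu = c_\ast 2^{-j}\mathbf m$, and the uncertainty is isotropic of scale $2^{-j}$. For a boundary packet, the phase $(1/2)(m_1 2^{-j} r + m_2 R \theta)$ is linear in polar coordinates, so a stationary phase computation in the Fourier integral in $(r,\theta)$ shows that the center frequency in the rotated Cartesian frame $(\hat r_k,\hat\theta_k)$ of sector $k$ is approximately $(m_1 2^{-j}/2)\hat r_k + (m_2/2)\hat\theta_k$, with anisotropic uncertainty $2^{-j}$ radially and $1$ tangentially (since the spatial support of the packet is of size $2^j\times 1$ in the $(r,R\theta)$ frame near $r\approx R$). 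The Gevrey-$s$ Fourier decay estimates of Lemmas~\ref{lem:typeI} and~\ref{lem:type2_decay} then give, schematically,
\[
|\widehat{\psi_\nu}(\xi)|^2 \lesssim_s 2^{2j}\exp\!\bigl(-c_s(2^j|\xi-\xi_\nu|)^{1/s}\bigr) \quad\text{(interior)},
\]
\[
|\widehat{\psi_\nu}(\xi)|^2 \lesssim_s 2^{j}\exp\!\bigl(-c_s(2^j|\xi-\xi_\nu|_r)^{1/s}-c_s|\xi-\xi_\nu|_t^{1/s}\bigr) \quad\text{(boundary)},
\]
where $(\cdot)_r,(\cdot)_t$ denote components in the $k$-rotated frame.

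With these localization estimates in hand, fix a threshold $T := C_s(\log(R/\varepsilon))^s$ and, to handle the infinitely many boundary scales $j<0$, a truncation level $j_{\min} := -\lceil 2\log(R/\varepsilon)+C\rceil$. The assignment is as follows: all boundary packets at scales $j<j_{\min}$ are dumped into $\cI_1$; among the remaining packets (finitely many scales), put $\nu \in \cI_1$ if $\xi_\nu$ lies outside $S$ with anisotropic margin $T$, put $\nu \in \cI_2$ if $\xi_\nu$ lies inside $S$ with anisotropic margin $T$, and put the rest into $\cI_3$. For the truncated boundary packets, the per-scale frame inequality applied to $f=e^{i\xi\cdot}\mathbf 1_{S_{j,k}^\ast}$ gives the pointwise bound $\sum_{k,\mathbf m}|\widehat{\psi_{j,k,\mathbf m}}(\xi)|^2 \lesssim R\cdot 2^j$, so integrating over $\xi\in S$ and summing the geometric series $\sum_{j<j_{\min}}2^j\lesssim 2^{j_{\min}}\leq \varepsilon^2/R$ yields a contribution $\lesssim \varepsilon^2/2$. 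For $\nu\in\cI_1\cup\cI_2$ above level $j_{\min}$, the Gevrey decay integrated over $S$ or $\R^2\setminus S$ gives $\|\widehat{\psi_\nu}\|_{L^2(\cdot)}^2 \lesssim_s \exp(-c_s T^{1/s})$ per packet, and the per-packet bound multiplied by the polynomial-in-$R/\varepsilon$ total count contributes at most $\varepsilon^2/2$ once $C_s$ is chosen large enough.

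The delicate residual count is handled by Lemma~\ref{lem:lattice_near_dilated_bd}. For interior packets at scale $j\in[0,j_{\max}]$, the center frequencies $c_\ast 2^{-j}\mathbf m$ form a lattice of spacing $c_\ast 2^{-j}$, and those within distance $T\cdot 2^{-j}$ of $\partial S$ are counted, after rescaling by $2^j$, by $\lesssim \max(T^2,T\cdot 2^j\cH^1(\partial S))$; multiplying by the $R/2^j$ angular sectors and summing over $j\in[0,\log R]$ yields $\lesssim RT^2 + RT\cH^1(\partial S)\log R \lesssim R(\log(R/\varepsilon))^{2s} + R(\log(R/\varepsilon))^{s+1}$. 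For boundary packets at scale $j_{\min}\le j<0$, in the $k$-rotated frame the center frequencies form an anisotropic lattice of spacings $(2^{-j}/2, 1/2)$; rescaling radially by $2^j$ maps $\partial S$ to a curve $\partial S'$ of length $\lesssim \cH^1(\partial S)$, and the number of lattice points within the (now isotropic) $T$-neighborhood of $\partial S'$ is $\lesssim T^2 + T\cH^1(\partial S)$ per sector, hence $\lesssim RT^2$ per scale. Summing over the $O(\log(R/\varepsilon))$ scales $j\in[j_{\min},0)$ gives $\lesssim R(\log(R/\varepsilon))^{1+2s}$, which dominates the interior contribution and matches the claimed bound.

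The main obstacle is the last paragraph: the boundary packets demand anisotropic bookkeeping, and naïvely summing over all scales $j<0$ diverges, so the truncation at $j_{\min}$ together with the frame-based pointwise bound $\sum_{k,\mathbf m}|\widehat{\psi_{j,k,\mathbf m}}(\xi)|^2 \lesssim R\cdot 2^j$ is essential. Once the sum is restricted to $|j_{\min}|\asymp \log(R/\varepsilon)$ scales, each contributing $\lesssim R T^2 = R(\log(R/\varepsilon))^{2s}$ to $\#\cI_3$, the extra logarithmic factor in the target exponent $1+2s$ emerges naturally from scale summation; everything else is tightly controlled by the well-shaped condition via Lemma~\ref{lem:lattice_near_dilated_bd}.
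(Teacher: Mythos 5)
Your interior-packet analysis essentially coincides with the paper's: you classify packets by the position of the center frequency $c_\ast 2^{-j}\mathbf m$ relative to $\partial S$ with an isotropic margin $T2^{-j}$, use Lemma~\ref{lem:typeI} for the energy estimate, and count the residuals with Lemma~\ref{lem:lattice_near_dilated_bd}. That part is fine.

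The boundary-packet part is where you diverge, and where the gap lies. You assert that Lemma~\ref{lem:type2_decay} ``gives, schematically,'' an anisotropic localization estimate of the form
\[
|\widehat{\psi_\nu}(\xi)|^2 \lesssim_s 2^{j}\exp\bigl(-c_s(2^j|\xi-\xi_\nu|_r)^{1/s}-c_s|\xi-\xi_\nu|_t^{1/s}\bigr)
\]
around a stationary-phase center frequency $\xi_\nu$. Lemma~\ref{lem:type2_decay} proves nothing of the sort. What it actually says is
\[
|\widehat{\psi_{j,k,\mathbf m}}(\xi)| \le C R\,2^{j/2}\exp(-c|m_1|^{1/s})\exp(-c|m_2|^{1/s}), \qquad |\xi|\le 1,
\]
which is a uniform, \emph{non-localized} bound on the ball $\{|\xi|\le 1\}$: there is no dependence on $\xi-\xi_\nu$ at all, the decay is only in the packet indices $(m_1,m_2)$, and the prefactor is $R^2 2^j$ (after squaring), not $2^j$. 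Your schematic estimate is sharper by a factor of roughly $R^2$ and, more importantly, asserts an entirely different kind of localization. The paper deliberately avoids identifying a center frequency for boundary packets precisely because they are strongly anisotropic (radial width $\sim 2^{-j}$, tangential width $\sim 1$, so substantial energy lies well outside any fixed ball), and because the curved sector introduces quadratic corrections to the phase in Cartesian coordinates that make a clean stationary-phase estimate nontrivial. Instead, the paper's boundary classification in \eqref{defn:partition_bdry} sorts packets \emph{purely by the index ranges} $j$, $|m_1|$, $|m_2|$, never consults $\partial S$, puts nothing in $\cI_2^{\mathrm{bdry}}$, and gets the count $\lesssim R\log(1/\delta)^{1+2s}$ by direct multiplication, with no lattice lemma needed on the boundary side. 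So you would need to actually prove your claimed anisotropic estimate (at least for $|\xi|\le 1$) before the rest of your boundary argument can stand; and that would be a genuinely new piece of analysis, arguably harder than the Bessel-expansion argument in Lemmas~\ref{lem:radial-bounds1}--\ref{lem:radial-bounds2} which yields only the weaker bound the paper uses.

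Two smaller remarks. First, the frame-based pointwise bound $\sum_{k,\mathbf m}|\widehat{\psi_{j,k,\mathbf m}}(\xi)|^2 \lesssim R\,2^j$, used to truncate scales $j<j_{\min}$, is correct and is a slightly tighter alternative to the paper's approach (the paper simply sums the crude bound $R^2 2^j$ from Lemma~\ref{lem:type2_decay} over $j\le -\log(1/\delta)$ and absorbs the extra $R^2$ by taking $\delta=\varepsilon^2/(CR^3)$). Second, your boundary residual count ``rescaling radially by $2^j$ maps $\partial S$ to a curve of length $\lesssim \cH^1(\partial S)$'' is somewhat loose because the rescaling is anisotropic, but the conclusion $\lesssim R T^2$ per scale is what the paper obtains by trivial enumeration anyway, so the well-shaped condition is not actually needed there (the paper uses it only for the interior count).
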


To prove this result we will analyze the concentration properties of the Fourier transforms of wave packets. The estimates will split into cases for the interior wave packets in Section \ref{sec:energy_type1} and for the boundary wave packets in Section \ref{sec:energy_type2}.

\subsection{Localization of interior wave packets $(0\leq j \leq j_{\max})$}\label{sec:energy_type1}

We categorize the interior wave packets into components $\cI_1^{\mathrm{int}}, \cI_2^{\mathrm{int}}, \cI_3^{\mathrm{int}}$ according to the relative position of the frequency vector
$c_\ast 2^{-j} {\bf m}$, ${\bf m}=(m_1, m_2)$
with respect to the frequency domain $S$. If the distance between 
\(2^{-j}c_\ast{\bf m}\)
and $\partial S$ is sufficiently large, then the packets go in $\cI_1^{\mathrm{int}}$ or $\cI_2^{\mathrm{int}}$. Otherwise, the packets go in $\cI_3^{\mathrm{int}}$. We describe the partition in more detail toward the end of this subsection.

 The next lemma asserts that the function $h_{j,k} = \phi_j(|x|) \eta_{j,k}(\arg(x))$ arising as a factor in the formula for the interior wave packets is a Gevrey--$s$ function, with constants determined by the scale $2^j$. (See Appendix \ref{appendix:Gevrey} for terminology and properties of Gevrey functions.) The case $j=0$ follows from composition properties of Gevrey functions. In this case, one can exploit that the coordinate change $x\to (r,\theta)$ is real-analytic away from the origin and the cutoffs $\phi_j$ and $\eta_{j,k}$ satisfy Gevrey--$s$ regularity conditions (see (R3) and (A3)), while the composition with a real--analytic map preserves Gevrey regularity. The proof of the result below, for general $j \geq 0$, follows by a concrete estimation involving the chain rule.

\begin{lemma}\label{derivative--bounds}
The function $h_{j,k}(x) = \phi_j(|x|) \eta_{j,k}(\arg(x))$ for $0 \leq j \leq j_{\max}$ and $1 \leq k \leq m(j) := 2^{j_{\max}-j}$ is a smooth function on $\R^2$, supported on the sector $S_{j,k}^\ast$, satisfying the derivative bounds
\[
| \partial^\alpha_{x} h_{j,k}(x)| \leq C_1 C_2^{|\alpha|} 2^{-j | \alpha|}(\alpha !)^s \qquad (x\in \Bbb R^2),
\]
for any multi--index $\alpha \in \Z_{\geq 0}^2$. Here, $C_1,C_2 >1$ are constants determined only by $s$.
\end{lemma}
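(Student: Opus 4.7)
The plan is to reduce the estimate to one-dimensional Gevrey bounds for $\phi_j$ and $\eta_{j,k}$ composed with the real-analytic polar coordinate functions $r(x)=|x|$ and $\theta(x)=\arg(x)$, and then to combine the two factors via the Leibniz rule. I would first dispose of the degenerate case $j=j_{\max}$ separately. Here $m(j_{\max})=1$, condition (A1) forces $\eta_{j_{\max},1}\equiv 1$, and \eqref{eqn:radcutoff1} gives $\phi_{j_{\max}}\equiv 1$ on a neighborhood of the origin. Thus $h_{j_{\max},1}(x)=\phi_{j_{\max}}(|x|)$ is smooth on $\R^2$ (the singular point of $|x|$ is harmless because the composed factor is constant there), and standard Faà di Bruno bounds with the scale $2^{-j_{\max}}=R^{-1}$ from (R3) yield the claim.

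For $0\leq j<j_{\max}$, the support $S_{j,k}^\ast$ satisfies $|x|\geq R-1.1\cdot 2^{j_{\max}-1}\geq 0.45\,R$, so both $r$ and $\theta$ are real-analytic on a neighborhood of the support. A direct Cauchy estimate (using the radius of convergence $\asymp |x|\asymp R$) gives
\[
|\partial^\alpha_x r(x)|\leq C^{|\alpha|}\alpha!\,R^{\,1-|\alpha|},\qquad
|\partial^\alpha_x \theta(x)|\leq C^{|\alpha|}\alpha!\,R^{-|\alpha|},
\]
on $S_{j,k}^\ast$. I would then apply the one-dimensional Faà di Bruno formula separately to $\phi_j\circ r$ and $\eta_{j,k}\circ\theta$, combining these with (R3) and (A3) and the Bell-polynomial estimate
\[
\Big|\sum_{\pi\in\mathcal P_p(\alpha)}\prod_{\beta\in\pi}\partial^\beta_x u(x)\Big|\leq C^{|\alpha|}\,\frac{\alpha!}{p!}\,R^{\,p\varepsilon_u-|\alpha|\cdot 0}\cdot(\text{scale})^{p-|\alpha|},
\]
with $\varepsilon_u=1$ for $u=r$ and $\varepsilon_u=0$ for $u=\theta$.

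The key book-keeping is to track where the $2^{-j|\alpha|}$ scaling comes from, and this is also the main technical obstacle. For $\phi_j\circ r$ one obtains a sum over $1\leq p\leq|\alpha|$ of terms of the form
\[
(p!)^s\,2^{-jp}\cdot R^{\,p-|\alpha|}\cdot\frac{\alpha!}{p!}\,C^{|\alpha|}
= C^{|\alpha|}\alpha!\,(p!)^{s-1}\,2^{(j_{\max}-j)p-j_{\max}|\alpha|};
\]
since $j\leq j_{\max}$, the factor $2^{(j_{\max}-j)p}$ is maximized at $p=|\alpha|$, producing $2^{-j|\alpha|}$. For $\eta_{j,k}\circ\theta$ the analogous calculation uses $m(j)/R=2^{-j}$ and yields the same scale $2^{-j|\alpha|}$. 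I expect the trickiest point to be verifying the Bell-polynomial bound cleanly in the multi-index setting and confirming that the worst term in the Faà di Bruno sum is indeed the one with $p=|\alpha|$; the rest of the argument is manipulation of constants.

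Finally, applying the Leibniz rule,
\[
\partial^\alpha_x h_{j,k}
=\sum_{\beta\leq\alpha}\binom{\alpha}{\beta}\partial^\beta_x\bigl(\phi_j\circ r\bigr)\,\partial^{\alpha-\beta}_x\bigl(\eta_{j,k}\circ\theta\bigr),
\]
and using the elementary inequality $\binom{\alpha}{\beta}(\beta!)^s((\alpha-\beta)!)^s\leq (\alpha!)^s$, the number of multi-indices $\beta\leq\alpha$ being at most $C^{|\alpha|}$, one arrives at the target bound
$|\partial^\alpha_x h_{j,k}(x)|\leq C_1 C_2^{|\alpha|}\,2^{-j|\alpha|}\,(\alpha!)^s$ with constants depending only on $s$.
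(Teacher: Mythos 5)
Your argument is correct and arrives at the same estimate, but by a genuinely different route. The paper proves the lemma by an explicit induction that expresses $\partial_x^\alpha$ as a polar-coordinate differential operator,
\[
\partial_x^\alpha=\sum_{p+q\le|\alpha|}P_{\alpha,p,q}(\theta)\,r^{p-|\alpha|}\,\partial_r^p\partial_\theta^q,
\]
with quantitative bounds on the trigonometric polynomials $P_{\alpha,p,q}$, and then applies (R3), (A3), and the inequality $R^{p+q-|\alpha|}\le 2^{j(p+q-|\alpha|)}$ directly. You instead treat $\phi_j\circ r$ and $\eta_{j,k}\circ\theta$ as separate Gevrey compositions via one-dimensional Fa\`a di Bruno, using Cauchy estimates for the real-analytic maps $r,\theta$ on the annulus $|x|\gtrsim R$, and then combine by Leibniz; the scaling bookkeeping $2^{-jp}R^{p-|\alpha|}=2^{(j_{\max}-j)p-j_{\max}|\alpha|}$, maximized at $p=|\alpha|$, is exactly the same mechanism the paper exploits (there appearing as $R^{p+q-|\alpha|}\le 2^{j(p+q-|\alpha|)}$). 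Your explicit treatment of $j=j_{\max}$ is a clean way to sidestep the origin singularity; the paper handles it implicitly by first observing that $\partial_x^\alpha h_{j,k}$ is supported in $R/4\le|x|\le R$ for all $j$. Two small points to tidy up if you write this out: the exponent on $R$ in your displayed Bell-polynomial estimate should read $p\varepsilon_u-|\alpha|$ (a product of $p$ factors each of size $R^{\varepsilon_u-|\beta_i|}$ with $\sum|\beta_i|=|\alpha|$), which matches your subsequent arithmetic, and you should be explicit that the Stirling-number combinatorics in dimension two is absorbed into the geometric factor $C^{|\alpha|}$. The paper's route is longer but self-contained; yours is more modular at the cost of invoking the multivariate Fa\`a di Bruno formula.
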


 \begin{proof} 
We denote the partial differentiation operator by $\partial_x^\alpha = \partial_{x_1}^{\alpha_1}\partial_{x_2}^{\alpha_2}$. Differentiation in Cartesian and radial coordinates is related by 
\[
\partial_{x_1}
= \cos\theta\,\partial_r
- \frac{\sin\theta}{r}\,\partial_\theta,
\qquad
\partial_{x_2}
= \sin\theta\,\partial_r
+ \frac{\cos\theta}{r}\,\partial_\theta \quad (r > 0),
\]
where $(r,\theta) = (|x|, \arg(x))$.
We claim that 
 \begin{align}\label{partial--diff}
\partial_x^{\alpha}
= \sum_{ p+q \leq |\alpha|}
P_{\alpha,p,q}(\theta) r^{p-|\alpha|}  \partial_r^{p}\partial_\theta^{q},
\end{align}
where $P_{\alpha,p,q}(\theta)$ is a trigonometric polynomial of degree at most $|\alpha|$ having the form
\begin{equation}\label{eqn:trigpoly}
P_{\alpha,p,q}(\theta) = \sum_{k=-|\alpha|}^{|\alpha|} c_{\alpha,p,q}(k) e^{i k \theta}, \quad 
 \text{with } 
 |c_{\alpha,p,q}(k)| \leq 4^{|\alpha|} |\alpha|^{|\alpha|-p-q}.
\end{equation}
Once we establish this claim we will know that 
\begin{align}\label{ineq:P}
\| P_{\alpha,p,q} \|_{L^\infty} \leq (2|\alpha|+1) 4^{|\alpha|} |\alpha|^{|\alpha|-p-q}\leq C^{|\alpha|}|\alpha|^{|\alpha|-p-q}.
\end{align}

We prove \eqref{partial--diff}--\eqref{eqn:trigpoly} by induction on $\alpha$. When $\alpha = 0$ the claim is true with $P_{0,0,0} = 1$. Suppose that the claim is true for some multindex $\alpha$. To complete the induction step we will establish the claim for the multi--index $\beta = \alpha + (1,0)$. (We omit the proof for the multi--index $\alpha + (0,1)$ as it is similar.) Then
\[
\begin{aligned}
\partial^\beta_x & = \partial_{x_1}( \partial^\alpha_x) = \sum_{p+q \leq |\alpha|} \partial_{x_1} \left(P_{\alpha,p,q}(\theta) r^{p-|\alpha|} \partial_r^{p}\partial_\theta^{q}\right)\\
&= \sum_{p+q \leq |\alpha|} \cos \theta \cdot \partial_r \left[ P_{\alpha,p,q}(\theta) r^{p-|\alpha|} \partial_r^p \partial_\theta^q \right] - \sum_{p+q \leq |\alpha|} \frac{\sin \theta}{r} \partial_\theta \left[ P_{\alpha,p,q}(\theta) r^{p-|\alpha|} \partial_r^p \partial_\theta^q \right] \\
&= \sum_{p+q \leq |\alpha|} \left[ (p-|\alpha|) \cos \theta \cdot P_{\alpha,p,q}(\theta) - \sin \theta \cdot P'_{\alpha,p,q}(\theta)\right] r^{p-|\alpha|-1} \partial_r^p \partial_\theta^q \\
& \qquad + \sum_{p+q \leq |\alpha|} \cos \theta \cdot P_{\alpha,p,q}(\theta) r^{p-|\alpha|} \partial_r^{p+1} \partial_\theta^q - \sum_{p+q \leq |\alpha|} \sin \theta \cdot P_{\alpha,p,q}(\theta) r^{p-|\alpha|-1} \partial_r^{p} \partial_\theta^{q+1}.
\end{aligned}
\]
We modify the index of summation in the second and third sums in the last line, and recall $|\beta| = |\alpha| + 1$, so,
\[
\begin{aligned}
\partial^\beta_x &= \sum_{p+q \leq |\beta| - 1 } \left[ (p-|\alpha|) \cos \theta \cdot P_{\alpha,p,q}(\theta) - \sin \theta \cdot P'_{\alpha,p,q}(\theta)\right] r^{p-|\beta|} \partial_r^p \partial_\theta^q \\
& + \sum_{p+q \leq |\beta|, p>0} \cos \theta \cdot P_{\alpha,p-1,q}(\theta) r^{p-|\beta|} \partial_r^{p} \partial_\theta^q - \sum_{p+q \leq |\beta|, q>0} \sin \theta \cdot P_{\alpha,p,q-1}(\theta) r^{p-|\beta|} \partial_r^{p} \partial_\theta^{q}.
\end{aligned}
\]
Therefore,
\[
\partial^\beta_x = \sum_{p+q \leq |\beta|} P_{\beta,p,q}(\theta) r^{p-|\beta|} \partial_r^p \partial_\theta^q
\]
where
\begin{equation}\label{eqn:trigpoly_ind}
\begin{aligned}
P_{\beta,p,q}(\theta) &:= \mathbf{1}_{p+q\leq |\beta|-1} \left[ (p-|\alpha|) \cos \theta \cdot P_{\alpha,p,q}(\theta) - \sin \theta \cdot P'_{\alpha,p,q}(\theta) \right] 
\\
&\qquad+ \mathbf{1}_{p > 0} \cos(\theta) \cdot P_{\alpha,p-1,q}(\theta) - \mathbf{1}_{q > 0} \sin(\theta) \cdot P_{\alpha,p,q-1}(\theta).
\end{aligned}
\end{equation}
According to the inductive hypothesis, $P_{\alpha,p,q}(\theta)$ is a $2 \pi$-periodic trigonometric polynomial of degree at most $|\alpha|$ with coefficients bounded by $4^{|\alpha|} |\alpha|^{|\alpha|-p-q}$ -- see \eqref{eqn:trigpoly}. It follows from \eqref{eqn:trigpoly_ind} that $P_{\beta,p,q}(\theta)$ is a $2\pi$--periodic trigonometric polynomial of degree at most $|\beta| = |\alpha|+1$ with coefficients of size bounded as
\[
\begin{aligned}
|c_{\beta,p,q}(k)| &\leq 2|\alpha| \cdot 4^{|\alpha|} |\alpha|^{|\alpha|-p-q} + 2 \cdot 4^{|\alpha|} |\alpha|^{|\alpha| - p - q + 1} = 4^{|\alpha|+1} |\alpha|^{|\alpha|-p-q+1} \leq 4^{|\beta|} |\beta|^{|\beta|-p-q}.
\end{aligned}
\]
(Here we use that the Fourier coefficients of $P'_{\alpha,p,q}$ are bounded by $|\alpha|$ times the corresponding Fourier coefficients of $P_{\alpha,p,q}$.) This completes the proof of \eqref{partial--diff}--\eqref{eqn:trigpoly} by induction on $\alpha$.

We shall now establish the derivative bound in the statement of the lemma. The bound is obviously true for $\alpha = 0$. For $\alpha \neq 0$, by \eqref{partial--diff} we have 
\begin{equation}\label{eqn:expression1}\partial_{x}^{\alpha} h_{j,k}(x) = \sum_{p+q \leq |\alpha|}
P_{\alpha,p,q}(\theta) r^{p-|\alpha|} \bigl(\partial_r^{p}\phi_j(r)\bigr)\bigl(\partial_{\theta}^{q}\eta_{j,k}(\theta)\bigr).
\end{equation}

We claim that $\partial_x^\alpha h_{j,k}$ is supported on the annular region $R/4 \leq |x| \leq R$. Indeed, if $0 \leq j < j_{\max}$ then condition (R2) of the radial cutoff (see Section \ref{sec:cutoffs}) implies $\supp(\phi_j) \subset [R-1.1 \cdot 2^j, R] \subset [R/4,R]$, proving the claim. On the other hand, if $j = j_{\max}$ then $\eta_{j_{\max},1} \equiv 1$ (see \eqref{prop:eta}) while $\phi_{j_{\max}} \equiv 1$ on $[0,R/4]$ (see \eqref{eqn:radcutoff1}) -- hence, $h_{j_{\max},1} \equiv 1$ for $|x| \leq R/4$, implying that $\partial^\alpha_x h_{j_{\max},1} \equiv 0$ for $|x| \leq R/4$, proving the claim.

Conditions (R3) and (A3) of the radial cutoff $\phi_j$ and angular cutoff $\eta_{j,k}$ assert that
\begin{equation}\label{eqn:cutoff_Gev}
\left\{
\begin{aligned}
&|\partial_r^{p}\phi_j(r)| \leq C_1 C_2^p (p!)^s 2^{-jp} \qquad\quad (j \leq j_{\max}, p \geq 0)\\
&|\partial_\theta^q \eta_{j,k}(\theta)| \leq C_1 C_2^q (q!)^s m(j)^{q} \qquad (0 \leq j \leq j_{\max}, q \geq 0)
\end{aligned}
\right.
\end{equation}
where $m(j) = 2^{j_{\max} - j} = R 2^{-j}$ for $j \geq 0$.

By the previous claim, $R/4\le r\le R$ on $\supp(\partial^\alpha_x h_{j,k})$, 
and since $p-|\alpha|\le 0$ for $p+q\le|\alpha|$, we can bound $r^{p-|\alpha|}\le (R/4)^{p-|\alpha|}$ in \eqref{eqn:expression1}. Using also \eqref{eqn:cutoff_Gev} in \eqref{eqn:expression1}, by the triangle inequality,
\[
|\partial_x^\alpha h_{j,k}(x)| \leq \sum_{p+q \leq |\alpha|} |P_{\alpha,p,q}(\theta)| (R/4)^{p-|\alpha|} C_1^2 C_2^{p+q} (p! q!)^s R^q 2^{-j(p+q)}.
\]
Given that $\| P_{\alpha,p,q}\|_{L^\infty} \leq C^{|\alpha|} |\alpha|^{|\alpha|-p-q}$ (see \eqref{ineq:P}) and the bound $C_2^{p+q} \leq C_2^{|\alpha|}$, for some $A,B >1$ we have
\[
|\partial_x^\alpha h_{j,k}(x)| \leq A \cdot B^{|\alpha|} \sum_{p+q \leq |\alpha|} R^{p + q - |\alpha|} 2^{-j(p+q )} |\alpha|^{|\alpha|-p-q} (p! q!)^s.
\]
Note that $R \geq 2^j$ for $0 \leq j \leq j_{\max}$, and so $R^{p+q-|\alpha|} \leq 2^{j(p+q-|\alpha|)}$ for $p+q \leq |\alpha|$. Thus,
\[
\begin{aligned}
|\partial_x^\alpha h_{j,k}(x)| & \leq A \cdot B^{|\alpha|} \sum_{p+q \leq |\alpha|} 2^{j(p + q - |\alpha|)} 2^{-j(p+q )} |\alpha|^{|\alpha|-p-q} (p! q!)^s \\
&= A \cdot B^{|\alpha|} \cdot 2^{-j|\alpha|} \sum_{p+q \leq |\alpha|} |\alpha|^{|\alpha|-p-q} (p! q!)^s.
\end{aligned}
\]
Note that the number of terms in the above sum is $(|\alpha|+1)(|\alpha|+2)/2\leq (|\alpha|+1)^2 \leq C^{|\alpha|}$ for some absolute $C>1$. Moreover, for $p+q\leq |\alpha|$, we have $p! \leq |\alpha|^p$, $q!\le |\alpha|^{q}$ and hence,
\[
|\alpha|^{|\alpha|-p-q}(p!q!)^s
\le |\alpha|^{|\alpha|-p-q}|\alpha|^{sp}|\alpha|^{sq}
=|\alpha|^{|\alpha|+(s-1)(p+q)}
\le |\alpha|^{s|\alpha|}.
\]
Finally, using Stirling's approximation and the multinomial bound (in dimension $2$),
\[
|\alpha|^{|\alpha|}\le e^{|\alpha|} |\alpha|! \le (2e)^{|\alpha|}\alpha!.
\]
Therefore,
\[
\sum_{p+q\le|\alpha|} |\alpha|^{|\alpha|-p-q}(p!q!)^s \leq \sum_{p+q\le|\alpha|} |\alpha|^{s|\alpha|} \le (|\alpha| + 1) (2e)^{s |\alpha|}(\alpha!)^s
\le C^{|\alpha|}(\alpha!)^s,
\]
and hence,
\[
|\partial_x^\alpha h_{j,k}(x)|
\le C_3 C_4^{|\alpha|} 2^{-j|\alpha|} (\alpha!)^s.
\]
 \end{proof}
The following lemma states that the Fourier transform of a wave packet $\psi_{j,k,\mathbf{m}}$ that is interior (i.e., with $0 \leq j \leq j_{\max}$) is tightly concentrated about the frequency vector $c_\ast 2^{-j} \mathbf{m}$. 
\begin{lemma}\label{lem:typeI}
For any $\mathbf{m}\in \Z^2$, $0 \leq j \leq j_{\max}$, $1 \leq k \leq m(j) = 2^{j_{\max} - j}$, 
\[
 |\widehat{\psi_{j,k,\mathbf{m}}}(\xi)| \leq C 2^{j} \exp(- c |2^j \xi - c_\ast\mathbf{m}|^{1/s}) \qquad (\xi \in \R^2),
\]
where $C,c > 0$ are constants determined by $s$.
\end{lemma}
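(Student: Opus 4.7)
The plan is to reduce the Fourier decay estimate for $\psi_{j,k,\mathbf{m}}$ to a Gevrey--Fourier decay estimate for the cutoff $h_{j,k}(x)=\phi_j(|x|)\eta_{j,k}(\arg x)$, and then exploit Lemma \ref{derivative--bounds} via a rescaling that absorbs the scale $2^{-j}$ appearing in the derivative bounds.

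First, I would use the modulation identity for the Fourier transform. Writing
\[
\psi_{j,k,\mathbf{m}}(x)=C_{j,k}\,2^{-j}\,h_{j,k}(x)\,e^{i c_\ast 2^{-j}\mathbf{m}\cdot x},
\]
we obtain $\widehat{\psi_{j,k,\mathbf{m}}}(\xi)=C_{j,k}\,2^{-j}\,\widehat{h_{j,k}}(\xi-c_\ast 2^{-j}\mathbf{m})$. Since $C_{j,k}$ is bounded (see \eqref{eqn:const_bds}), with $\eta := \xi-c_\ast 2^{-j}\mathbf{m}$ the claim reduces to showing
\[
|\widehat{h_{j,k}}(\eta)|\leq C\cdot 2^{2j}\exp\bigl(-c\,|2^j\eta|^{1/s}\bigr).
\]

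Next, I would rescale so that the Gevrey estimates become scale--free. Let $\tilde h(y):=h_{j,k}(2^j y)$. By the chain rule and Lemma \ref{derivative--bounds},
\[
|\partial^\alpha \tilde h(y)|=2^{j|\alpha|}|\partial^\alpha h_{j,k}(2^j y)|\leq C_1 C_2^{|\alpha|}(\alpha!)^s,
\]
so $\tilde h$ is Gevrey--$s$ with constants $(C_1,C_2)$ independent of $j$. Moreover, $\supp(h_{j,k})\subset S_{j,k}^\ast$ fits inside a square of side $C_\ast\,2^j$ (this is the geometric fact used to fix $c_\ast$ in \eqref{defn:cstar}); hence $\supp(\tilde h)$ is contained in a fixed bounded set independent of $j$ and $k$. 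I would then invoke the standard Gevrey decay statement from Appendix \ref{appendix:Gevrey}: a compactly supported Gevrey--$s$ function with fixed constants and bounded support has Fourier transform satisfying
\[
|\widehat{\tilde h}(\zeta)|\leq C\exp\bigl(-c\,|\zeta|^{1/s}\bigr),
\]
for constants $c,C>0$ depending only on $s$. Finally, inverting the scaling via $\widehat{\tilde h}(\zeta)=2^{-2j}\widehat{h_{j,k}}(2^{-j}\zeta)$ and setting $\zeta=2^j\eta$ produces the desired bound on $\widehat{h_{j,k}}(\eta)$, which combined with the modulation step completes the proof.

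The only real obstacle is ensuring that the hypotheses of the Gevrey--Fourier decay lemma in Appendix \ref{appendix:Gevrey} match what is needed; the scaling is designed precisely so that the Gevrey constants and the support diameter of $\tilde h$ are both independent of $j$, at which point the standard argument (test against $\zeta^\alpha\widehat{\tilde h}(\zeta)$, take the infimum over $|\alpha|\sim |\zeta|^{1/s}$ using Stirling) yields the stated near--exponential decay. No delicate oscillatory analysis is required: all the geometric information about the sector $S_{j,k}^\ast$ has already been encoded in Lemma \ref{derivative--bounds}.
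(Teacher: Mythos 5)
Your proof is correct and follows essentially the same route as the paper: express $\psi_{j,k,\mathbf{m}}$ as a modulation of $h_{j,k}$, invoke the derivative bounds of Lemma \ref{derivative--bounds} to place $h_{j,k}$ in a Gevrey--$s$ class, and apply the Gevrey--Fourier decay estimate of Lemma \ref{lemma:Gevrey:Fourier_decay}. The intermediate rescaling $\tilde h(y)=h_{j,k}(2^j y)$ is a harmless normalization; the paper skips it by applying Lemma \ref{lemma:Gevrey:Fourier_decay} directly to $h_{j,k}$ with Gevrey constants $(C_1, C_2 2^{-j})$ and support measure $\approx 2^{2j}$, which encodes the same scaling.
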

 \begin{proof} 
Observe $\psi_{j,k,\mathbf{m}}(x) = 2^{-j} h_{j,k}(x) \exp(i c_\ast 2^{-j} \langle \mathbf{m}, x \rangle)$ where $h_{j,k}(x) = \phi_j(|x|) \eta_{j,k}(\arg(x))$ is a smooth function on $\R^2$, supported on the sector $S_{j,k}^\ast$ of measure $\approx 2^{2j}$. Also, Lemma \ref{derivative--bounds} asserts that
\[
\| \partial^\alpha_{x} h_{j,k} \|_{L^\infty} \leq C_1 C_2^{|\alpha|} (\alpha !)^s (2^{-j})^{|\alpha|},
\]
so that $h_{j,k}$ is Gevrey--$s$ with constants $(C_1,C_2 2^{-j})$. Lemma \ref{lemma:Gevrey:Fourier_decay} then implies that
\[
|\widehat{h_{j,k}}(\xi)| \leq C C_1 2^{2j} \exp( - c | 2^j \xi/C_2|^{1/s}),
\]
where $C=e$ and $c=\frac{1}{2e}$. Thus, for constants $C_3,c_3 > 0$ determined by $s$,
\[
|\widehat{h_{j,k}}(\xi)| \leq C_3 2^{2j} \exp( - c_3 | 2^j \xi|^{1/s}).
\]
Finally, $\widehat{\psi_{j,k,\mathbf{m}}}(\xi)= 2^{-j} \widehat{h_{j,k}}(\xi - c_\ast 2^{-j} \mathbf{m})$ and we obtain the desired result. 
\end{proof}
We categorize the interior wave packets into components $\cI_1^{\mathrm{int}}, \cI_2^{\mathrm{int}}, \cI_3^{\mathrm{int}}$. Let $\delta \in (0,1/2)$ be given. We fix a large parameter $A > 1$, and define index sets for the threshold $\delta$,
\begin{equation}\label{defn:partition_int}
\begin{aligned}
&\cI_1^{\mathrm{int}} = \{ (j,k,\mathbf{m}) : 0 \leq j \leq j_{\max}, 1 \leq k \leq 2^{j_{\max}-j}, \mathrm{dist}(c_\ast2^{-j}\mathbf{m}, S) \geq A 2^{-j} \log(1/\delta)^s \},\\
&\cI_2^{\mathrm{int}} = \{ (j,k,\mathbf{m}) : 0 \leq j \leq j_{\max}, 1 \leq k \leq 2^{j_{\max}-j}, \mathrm{dist}(c_\ast2^{-j}\mathbf{m}, \R^2 \setminus S) \geq A 2^{-j} \log(1/\delta)^s \},\\
&\cI_3^{\mathrm{int}} = \{ (j,k,\mathbf{m}) : 0 \leq j \leq j_{\max}, 1 \leq k \leq 2^{j_{\max}-j}, \mathrm{dist}(c_\ast2^{-j}\mathbf{m}, \partial S) \leq A 2^{-j} \log(1/\delta)^s \}.
\end{aligned}
\end{equation}
Evidently, the index set for the interior wave packets is decomposed by $\cI_1^{\mathrm{int}}, \cI_2^{\mathrm{int}}, \cI_3^{\mathrm{int}}$, that is,
\[
\cI^{\mathrm{int}} = \{(j,k, \mathbf{m}) : 0 \leq j \leq j_{\max}, 1 \leq k \leq 2^{j_{\max} - j} , \mathbf{m} \in \Z^2 \} = \cI_1^{\mathrm{int}} \cup \cI_2^{\mathrm{int}} \cup \cI_3^{\mathrm{int}} .
\]
First, we obtain an estimate for the cardinality of $\cI_3^{\mathrm{int}}$.

\begin{lemma}
\label{lem:residual_card_bd} 
\begin{equation}\label{eqn:res_bound1}
\#(\cI_3^{\mathrm{int}}) \lesssim_{A,S} R \max\{  \log R \log(1/\delta)^{s},  \log(1/\delta)^{2s} \}.
\end{equation}
\end{lemma}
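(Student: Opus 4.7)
The plan is to count $\cI_3^{\mathrm{int}}$ scale by scale. For each fixed $j \in \{0,1,\ldots,j_{\max}\}$, the constraint defining $\cI_3^{\mathrm{int}}$ depends on $(j,\mathbf{m})$ but not on the angular index $k$, so
\[
\#\cI_3^{\mathrm{int}}
= \sum_{j=0}^{j_{\max}} m(j) \cdot N_j,
\qquad
N_j := \#\{\mathbf{m}\in\Z^2 : \mathrm{dist}(c_\ast 2^{-j}\mathbf{m},\partial S) \leq A 2^{-j}\log(1/\delta)^s\},
\]
where $m(j) = 2^{j_{\max}-j}$. The main task is to estimate $N_j$; this is where I would use the geometric hypothesis on $S$ together with Lemma~\ref{lem:lattice_near_dilated_bd}.

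To do so, I rescale by $2^j/c_\ast$ to convert the problem into counting integer lattice points near the boundary of a dilated copy of $S$. Let $S_j := (2^j/c_\ast)S$; then $\mathbf{m}\in\Z^2$ satisfies $\mathrm{dist}(c_\ast 2^{-j}\mathbf{m},\partial S)\leq A 2^{-j}\log(1/\delta)^s$ if and only if $\mathrm{dist}(\mathbf{m},\partial S_j)\leq t_\delta$, where $t_\delta := (A/c_\ast)\log(1/\delta)^s$. By Remark~\ref{rem:well-shaped-dilationinvariance}, $S_j$ is well--shaped with the same constant as $S$, and $\cH^1(\partial S_j) = (2^j/c_\ast)\cH^1(\partial S)$. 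Since $\delta\in(0,1/2)$ gives $\log(1/\delta)\geq 1$, choosing $A$ sufficiently large (depending only on $c_\ast$) guarantees the hypothesis $t_\delta\geq \sqrt{2}/2$ of Lemma~\ref{lem:lattice_near_dilated_bd}, which then yields
\[
N_j \;\leq\; 4C\max\{t_\delta^{\,2},\; t_\delta \cH^1(\partial S_j)\}
\;\lesssim_{A,S}\; \max\bigl\{\log(1/\delta)^{2s},\; 2^j\log(1/\delta)^s\bigr\}.
\]

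The remainder is a routine summation. Multiplying by $m(j) = 2^{j_{\max}-j}$ and using the bound $\max\{a,b\}\le a+b$,
\[
m(j)\cdot N_j \;\lesssim_{A,S}\; 2^{j_{\max}-j}\log(1/\delta)^{2s} + 2^{j_{\max}}\log(1/\delta)^{s}.
\]
Summing over $0\leq j\leq j_{\max}$, the first term contributes $\lesssim 2^{j_{\max}}\log(1/\delta)^{2s} = R\log(1/\delta)^{2s}$ (geometric series), and the second contributes $\lesssim (j_{\max}+1) R\log(1/\delta)^{s} \lesssim R(\log R)\log(1/\delta)^{s}$. Taking the maximum of the two yields the stated bound
\[
\#\cI_3^{\mathrm{int}}\;\lesssim_{A,S}\; R\max\bigl\{(\log R)\log(1/\delta)^s,\;\log(1/\delta)^{2s}\bigr\}.
\]

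The only nontrivial ingredient is bounding $N_j$, and that is handled cleanly by rescaling together with the well--shaped Minkowski estimate from Lemma~\ref{lem:lattice_near_dilated_bd}; no further analytic obstacle arises. The size condition $t_\delta \ge \sqrt{2}/2$ needed to apply that lemma is the only mildly subtle point, and it is harmless because we are free to enlarge $A$ (or, alternatively, to absorb the case of small $t_\delta$ into the constant via a trivial covering argument, since then $N_j\lesssim 1 + 2^j\cH^1(\partial S)$ by direct inspection).
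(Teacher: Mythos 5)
Your proof is correct and follows essentially the same route as the paper's: both count lattice points near $\partial S$ scale-by-scale via the rescaled well--shaped estimate (Lemma~\ref{lem:lattice_near_dilated_bd} combined with Remark~\ref{rem:well-shaped-dilationinvariance}), then multiply by the number of angular indices $m(j)$ and sum the resulting geometric/linear series over $j$. The only cosmetic difference is that you split the $\max$ into a sum before summing over $j$, whereas the paper carries the $\max$ through the summation; the resulting bound is the same.
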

\begin{proof}
For fixed $0 \leq j \leq j_{\max}$, let
\begin{equation}
\label{eqn:lattice_count1}
\begin{aligned} 
M_j &:= \#\{ \mathbf{m} \in \Z^2 : \mathrm{dist}(c_\ast2^{-j}\mathbf{m}, \partial S) \leq A 2^{-j} \log(1/\delta)^s \} \\
&= \#\{ \mathbf{m} \in \Z^2 : \dist(\mathbf{m}, c_\ast^{-1}2^j \partial S) \leq A c_\ast^{-1} \log(1/\delta)^s \} \\
& \leq C_{S} \max\{ A c_\ast^{-1} \log(1/\delta)^s \cH^1( c_\ast^{-1}2^j \partial S), (A c_\ast^{-1} \log(1/\delta)^s)^2 \} \\
& \leq C_{A,S} \max\{ 2^j \log(1/\delta)^s, \log(1/\delta)^{2s}\}. 
\end{aligned}
\end{equation}
For the proof of the first inequality in the above, we use Lemma \ref{lem:lattice_near_dilated_bd} noting that $t = A c_\ast^{-1} \log(1/\delta)^s$ is at least $1$, together with the fact that $c_\ast^{-1}2^j S$ is well-shaped with the same constant as $S$; see Remark \ref{rem:well-shaped-dilationinvariance}. In the second inequality, we absorb the quantities $\cH^1(\partial S)$, $A$, $c_\ast^{-1}$ into the constant $C_{A,S}$. 
Therefore,
\begin{align*}
\#(I^{\mathrm{int}}_3)
\leq
\sum_{j=0}^{j_{\max}}
2^{j_{\max}-j} M_j
& \lesssim_{A,S}
\sum_{j=0}^{j_{\max}}
2^{j_{\max}-j}\max\{ 2^j \log(1/\delta)^s, \log(1/\delta)^{2s} \} \\
&\lesssim \max\{ j_{\max} 2^{j_{\max}}\log(1/\delta)^{s}, 2^{j_{\max}} \log(1/\delta)^{2s} \}.
\end{align*}
Recalling that $R=2^{j_{\max}}$ and $j_{\max} = \log(R)$, we obtain \eqref{eqn:res_bound1}.
\end{proof}

Now, we state the energy estimates corresponding to this partition:
\begin{lemma}\label{lem:energy_type1}
Suppose $A$ is larger than a big enough constant determined by $s$. Given $\delta \in (0,1/2)$, define the index sets $\cI_1^{\mathrm{int}},\cI_2^{\mathrm{int}}, \cI_3^{\mathrm{int}}$ as in \eqref{defn:partition_int}. Then
\begin{equation}\label{eqn:energy_est}
\sum_{\nu \in \cI_1^{\mathrm{int}}} \| \widehat{\psi_\nu} \|_{L^2(S)}^2 + \sum_{\nu \in \cI_2^{\mathrm{int}}} \| \widehat{\psi_\nu} \|_{L^2(\R^2 \setminus S)}^2 \leq C R^2 \delta^2,
\end{equation}
where $C>0$ is a constant determined by $s$.
\end{lemma}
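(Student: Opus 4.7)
I would argue by combining the Gevrey Fourier decay of Lemma~\ref{lem:typeI} with the geometric distance conditions defining $\cI_1^{\mathrm{int}}$ and $\cI_2^{\mathrm{int}}$. The unifying observation is that for $(j,k,\mathbf{m})\in\cI_1^{\mathrm{int}}$ and any $\xi\in S$, or for $(j,k,\mathbf{m})\in\cI_2^{\mathrm{int}}$ and any $\xi\in\R^2\setminus S$, one has $|2^j\xi - c_\ast\mathbf{m}|\geq A\log(1/\delta)^s$, so the pointwise bound
\[
|\widehat{\psi_{j,k,\mathbf{m}}}(\xi)|^2 \leq C^2\,2^{2j}\exp\bigl(-2c\,|2^j\xi - c_\ast\mathbf{m}|^{1/s}\bigr)
\]
always lives deep in its Gevrey tail. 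The argument then splits by how one aggregates the sum.

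For $\cI_1^{\mathrm{int}}$ I would swap the $\mathbf{m}$-summation with integration over $\xi\in S$ and bound the inner lattice sum by its integral analogue over $\R^2$. The lattice-to-integral comparison uses H\"older continuity of $t\mapsto t^{1/s}$ on $[1,\infty)$ (whose derivative is at most $1/s$) to absorb variations of $\exp(-2c|\cdot|^{1/s})$ across a Voronoi cell of $c_\ast\Z^2$ into a constant depending only on $s$. Splitting $\exp(-2c|y|^{1/s})=\exp(-c|y|^{1/s})\cdot\exp(-c|y|^{1/s})$ and pulling the first factor outside the integral then bounds the tail by $C_s\,2^{2j}\exp(-c'A^{1/s}\log(1/\delta))=C_s\,2^{2j}\,\delta^{c'A^{1/s}/\ln 2}$ for some $c'>0$ depending only on $s$. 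Integrating in $\xi\in S$ (contributing a factor of $|S|$), summing over the $2^{j_{\max}-j}$ angular indices, and summing over $j\in[0,j_{\max}]$ produces the running total $C_s\,|S|\,R^2\,\delta^{c'A^{1/s}/\ln 2}$.

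For $\cI_2^{\mathrm{int}}$ I would reverse the order: fix $\mathbf{m}$ with $c_\ast 2^{-j}\mathbf{m}$ deep inside $S$ and integrate $|\widehat{\psi_\nu}(\xi)|^2$ over $\xi\in\R^2\setminus S$. For any such $\xi$, $|\xi - c_\ast 2^{-j}\mathbf{m}|\geq\dist(c_\ast 2^{-j}\mathbf{m},\R^2\setminus S)\geq A\,2^{-j}\log(1/\delta)^s$, so the change of variables $y=2^j\xi - c_\ast\mathbf{m}$ places the integration into the region $\{|y|\geq A\log(1/\delta)^s\}$, on which the same Gevrey-tail integral yields $C_s\,\delta^{c'A^{1/s}/\ln 2}$ per $\mathbf{m}$. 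The number of admissible $\mathbf{m}$ at scale $j$ is at most $\lesssim_S|S|\,2^{2j}$ (lattice points in $S$ at spacing $c_\ast 2^{-j}$), and aggregating over $k$ and $j$ reproduces the bound above. Choosing the absolute constant $A$ large enough (depending only on $s$) so that $c'A^{1/s}/\ln 2\geq 2$ converts $\delta^{c'A^{1/s}/\ln 2}$ into $\delta^2$ and yields the claimed bound $CR^2\delta^2$. The main technical obstacle will be the lattice-to-integral comparison in the $\cI_1^{\mathrm{int}}$ case: because this index set can contain infinitely many lattice points with $c_\ast 2^{-j}\mathbf{m}$ arbitrarily far from $S$, the estimate must come entirely from the Gevrey decay rather than from a cardinality count, and the H\"older comparison constant must be uniform in $j$ and $\delta$ so that the final aggregation over $(j,k)$ produces a clean $R^2$ factor with no residual logarithmic losses.
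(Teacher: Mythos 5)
Your proposal is correct, and for the $\cI_1^{\mathrm{int}}$ sum it is essentially the paper's argument: in both cases the lattice sum over $\mathbf m$ is compared to the integral $\int_{|\eta|\gtrsim A\log(1/\delta)^s}\exp(-2c|\eta|^{1/s})\,d\eta$ using that the exponent varies by a bounded amount across a Voronoi cell of $c_\ast\Z^2$ (the paper phrases this as ``if $|t-s|\le C$ then $e^t\le e^C e^s$''; your H\"older-continuity observation for $t\mapsto t^{1/s}$ is the same content). Multiplying by $|S|\cdot 2^{2j}$, summing over $k$ and $j$, and choosing $A$ large then produces $C_s R^2 \delta^2$.

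For $\cI_2^{\mathrm{int}}$ you genuinely depart from the paper. You integrate each packet's Fourier tail over $\R^2\setminus S$ (change of variables $y=2^j\xi - c_\ast\mathbf m$ cancels the $2^{2j}$ normalization and gives $O(\delta^{\mathrm{const}\cdot A^{1/s}})$ per packet), then count admissible $\mathbf m$ with $c_\ast 2^{-j}\mathbf m$ well inside $S$ by a crude volume bound $\lesssim 2^{2j}|S|$, and aggregate over $k$ and $j$ to get $R^2\delta^2$. The paper instead uses a double-integral trick: it bounds $\int_{\R^2\setminus S}\sum_{\mathbf m\in\cM_j}\exp(-2c|2^j\xi-c_\ast\mathbf m|^{1/s})\,d\xi$ by a double integral over pairs $(x,y)$ with $|x-y|\gtrsim A\log(1/\delta)^s$, which yields $2^{-2j}\exp(-c''A^{1/s}\log(1/\delta))$; the $2^{2j}$ and $2^{-2j}$ cancel and the final bound for $\cI_2^{\mathrm{int}}$ is only $O(R\,\delta^2)$, one power of $R$ better than yours. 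Your version is more elementary and symmetric with the $\cI_1^{\mathrm{int}}$ treatment, while the paper's is sharper; the extra sharpness is lost in the final statement anyway, since the $\cI_1^{\mathrm{int}}$ contribution already forces the $R^2\delta^2$ bound. Both routes validly establish the lemma as stated.
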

\begin{proof}
We introduce some notation. For $\mathbf{m} \in \Z^2$ and $j \in \Z$ let $S_j(\mathbf{m})$ denote the $c_\ast2^{-j} \times c_\ast 2^{-j}$ axis--parallel square having $c_\ast 2^{-j} \mathbf{m}$ as its bottom left corner. Then the collection $\{S_j(\mathbf{m})\}_{\mathbf{m} \in \Z^2}$ is a cover of $\R^2$ by closed squares that are pairwise disjoint up to sets of Lebesgue measure zero. Evidently,
\begin{equation}
 \label{eqn:Sjprop}
 |\xi' -c_\ast 2^{-j} \mathbf{m}| \leq \sqrt{2} c_\ast 2^{-j} \mbox{ for all } \xi' \in S_j(\mathbf{m}).
\end{equation}
We estimate the first sum in \eqref{eqn:energy_est} indexed by $\cI_1^{\mathrm{int}}$. Let $\mathcal{H}_{j} \subset \Z^2$ be the set of all $\mathbf{m}$ such that $\dist(c_\ast2^{-j} \mathbf{m}, S) \geq A 2^{-j} \log(1/\delta)^s$, so that 
\[
(j,k,\mathbf{m}) \in \cI_1^{\mathrm{int}} \iff 0 \leq j \leq j_{\max}, 1 \leq k \leq 2^{j_{\max}-j}, \mathbf{m} \in \cH_{j}.
\]
By Lemma \ref{lem:typeI}, 
\begin{equation}\label{eqn:integral1}
\begin{aligned}
 \sum_{(j,k,\mathbf{m}) \in \cI^{\mathrm{int}}_1} \| \widehat{\psi_{j,k,\mathbf{m}}} \|_{L^2(S)}^2 &= \sum_{(j,k,\mathbf{m}) \in \cI^{\mathrm{int}}_1} \int_{S} |\widehat{\psi_{j,k,\mathbf{m}}}(\xi)|^2 d \xi \\
& \lesssim_s \sum_{(j,k,\mathbf{m}) \in \cI_1^{\mathrm{int}}} \int_{S} \left(2^j \exp( - c |2^j \xi - c_\ast \mathbf{m}|^{1/s}) \right)^2 d \xi \\
& = \sum_{j=0}^{j_{\max}} \sum_{k=1}^{2^{j_{\max}-j}} 2^{2j} \int_{S} \sum_{\mathbf{m} \in \cH_{j}} \exp( - 2c |2^j \xi - c_\ast \mathbf{m}|^{1/s}) d \xi.
\end{aligned}
\end{equation}
By \eqref{eqn:Sjprop}, for $\xi \in S$ and $\xi' \in S_j(\mathbf{m})$, $\mathbf{m} \in \cH_j$, by the triangle inequality,
\begin{equation}\label{eqn:sep_est}
\begin{aligned}
|\xi - \xi'| & \geq |\xi - c_\ast 2^{-j} \mathbf{m}| - |\xi'-c_\ast 2^{-j} \mathbf{m}| \geq \dist(c_\ast 2^{-j} \mathbf{m} , S) - \sqrt{2} c_\ast 2^{-j} \\
& \geq A 2^{-j} \log(1/\delta)^s - \sqrt{2} c_\ast 2^{-j} \geq (A/2) 2^{-j} \log(1/\delta)^s, 
\end{aligned}
\end{equation}
provided that $A \geq 2 \sqrt{2} c_\ast$ -- note, in the last inequality we have also used that $\log(1/\delta)^s \geq 1$. 
Since $|2^j \xi' - c_\ast \mathbf{m}|$ is uniformly bounded on $S_j( \mathbf{m})$,
\[
\exp(-2c|2^j \xi - c_\ast \mathbf{m}|^{1/s}) \leq C \exp(-2c|2^j \xi - 2^j \xi'|^{1/s}) \mbox{ for } \xi' \in S_j(\mathbf{m}).
\]
(Here, we exploit the fact that if $|t-s| \leq C$ then $e^t \leq e^C e^s$.)
Therefore, for fixed $\xi \in S$, 
\[
\begin{aligned}
\sum_{\mathbf{m} \in \cH_j} \exp( -2 c |2^j \xi - c_\ast \mathbf{m}|^{1/s}) &\lesssim \sum_{\mathbf{m} \in \cH_j } \frac{1}{|S_j(\mathbf{m})|}\int_{S_j(\mathbf{m})} \exp( -2 c |2^j \xi - 2^j \xi' |^{1/s})d\xi' \\
&= \sum_{\mathbf{m} \in \cH_j } (c_\ast 2^{-j})^{-2} \int_{S_j(\mathbf{m})} \exp(-2c |2^j\xi- 2^j \xi'|^{1/s}) d\xi' \\
& \lesssim \int_{|\eta| \geq (A/2) \log(1/\delta)^s} \exp(-2c |\eta|^{1/s}) d\eta,
\end{aligned}
\]
where in the last inequality we have used that $\{S_j(\mathbf{m})\}_{\mathbf{m} \in \cH_j}$ is pairwise disjoint up to Lebesgue null sets, together with the estimate \eqref{eqn:sep_est} and the change of variables $\eta = 2^j(\xi-\xi')$ in the integral.
Because $t \mapsto \exp(-2c t^{1/s})$ is rapidly decaying, we can continue the estimation to obtain
\begin{equation}\label{eqn:integral2}
\begin{aligned}
& \sum_{\mathbf{m} \in \cH_j} \exp( -2 c |2^j \xi - c_\ast \mathbf{m}|^{1/s}) \lesssim \int_{|\eta| > (A/2) \log(1/\delta)^s} \exp(-2c |\eta|^{1/s}) d\eta \\
& \quad = 2\pi \int_{(A/2) \log(1/\delta)^s}^\infty t \exp(-2c t^{1/s})
dt \lesssim \exp( -c' (A \log(1/\delta)^s)^{1/s})
\end{aligned}
\end{equation}
for an absolute constant $0 < c' < c$.
Combining \eqref{eqn:integral1} and \eqref{eqn:integral2}, we have
\[
\begin{aligned}
\sum_{(j,k,\mathbf{m}) \in \cI^{\mathrm{int}}_1} \| \widehat{\psi_{j,k,\mathbf{m}}} \|_{L^2(S)}^2 & \lesssim_s \sum_{j=0}^{j_{\max}} \sum_{k=1}^{2^{j_{\max}-j}} 2^{2j} \exp(-c' A^{1/s} \log(1/\delta)) \int_S d \xi \\
& = 2^{j_{\max}} \sum_{j=0}^{j_{\max}} 2^{j} \exp(-c' A^{1/s} \log(1/\delta)) |S| \\
& \lesssim 2^{2j_{\max}} \exp(-c' A^{1/s} \log(1/\delta)) \\
& = R^2 \exp(-c' A^{1/s} \log(1/\delta)).
\end{aligned}
\]
where we have used that $2^{j_{\max}} = R$ and the Lebesgue measure of $S$ is bounded by an absolute constant. For large enough $A$ we have $c' A^{1/s} \geq 2$, which implies $\exp(-c' A^{1/s} \log(1/\delta)) \leq \exp(-2 \log(1/\delta)) = \delta^2$, so that
\begin{equation}\label{eqn:energyI1}
\sum_{(j,k,\mathbf{m}) \in \cI^{\mathrm{int}}_1} \| \widehat{\psi_{j,k,\mathbf{m}}} \|_{L^2(S)}^2 \lesssim_s R^2 \delta^2.
\end{equation}

We now estimate the second sum in \eqref{eqn:energy_est} indexed by $\cI_2^{\mathrm{int}}$. Let $\mathcal{M}_{j} \subset \Z^2$ be the set of all $\mathbf{m}$ such that $\dist(c_\ast2^{-j} \mathbf{m}, \R^2 \setminus S) \geq A 2^{-j} \log(1/\delta)^s$, so that 
\[
(j,k,\mathbf{m}) \in \cI_2^{\mathrm{int}} \iff 0 \leq j \leq j_{\max}, 1 \leq k \leq 2^{j_{\max}-j}, \mathbf{m} \in \cM_{j}.
\]
Replicating the previous argument using Lemma \ref{lem:typeI}, we have
\[
\sum_{(j,k,\mathbf{m}) \in \cI_2^{\mathrm{int}}} \| \widehat{\psi_{j,k,\mathbf{m}}} \|_{L^2(\R^2 \setminus S)}^2 \lesssim_s \sum_{j=0}^{j_{\max}} \sum_{k=1}^{2^{j_{\max}-j}} 2^{2j} \int_{\R^2 \setminus S} \sum_{\mathbf{m} \in \cM_{j}} \exp( - 2c |2^j \xi - c_\ast \mathbf{m}|^{1/s}) d \xi.
\]
As in the proof of \eqref{eqn:sep_est}, we have
\begin{equation}
 |\xi - \xi'| \geq (A/2) 2^{-j} \log(1/\delta)^s \mbox{ for all } \xi \in \R^2 \setminus S, \; \xi' \in S_j(\mathbf{m}), \; \mathbf{m} \in \cM_j.
\end{equation}
For fixed $\xi \in \R^2 \setminus S$, the sum inside the integral can be estimated as 
\[
\begin{aligned}
\sum_{\mathbf{m} \in \cM_j} \exp( - 2 c |2^j \xi - c_\ast \mathbf{m}|^{1/s}) \lesssim 2^{2j} \int_{ \dist(\xi', \R^2 \setminus S) > (A/2) 2^{-j} \log(1/\delta)^s} \exp(-c' |2^j \xi - 2^j \xi'|^{1/s}) d\xi'.
\end{aligned}
\]
Hence, \[
\begin{aligned}
&\int_{\R^2 \setminus S} \sum_{\mathbf{m} \in \cM_j} \exp( - 2c |2^j \xi - c_\ast \mathbf{m}|^{1/s}) d \xi \\
&\lesssim 2^{2j} \int_{\R^2 \setminus S} \int_{ \dist(\xi', \R^2 \setminus S) > (A/2) 2^{-j} \log(1/\delta)^s} \exp(-c' |2^j \xi - 2^j \xi'|^{1/s}) d \xi' d \xi \\
&= 2^{-2j} \int_{\R^2 \setminus (2^j S)} \int_{ \dist(x, \R^2 \setminus (2^j S)) > (A/2) \log(1/\delta)^s} \exp(-c' |x - y|^{1/s}) dx d y \\
& \leq 2^{-2j} \int_{(x,y) \in \R^2 \times \R^2 : |x-y| > (A/2) \log(1/\delta)^s} \exp(-c'|x-y|^{1/s}) dx dy \\
& \lesssim 2^{-2j} \exp(-c'' A^{1/s} \log(1/\delta)).
\end{aligned}
\]
In the second to last line, we use the larger domain 
\[
\{(x,y)\in \mathbb{R}^2\times \mathbb{R}^2 : |x-y| > (A/2) \log(1/\delta)^s \}.
\]
This is allowed because
if $(x,y)$ satisfies the original restrictions, then 
$|x-y| > (A/2) \log(1/\delta)^s$.

Therefore,
\[
\begin{aligned}
\sum_{(j,k,\mathbf{m}) \in \cI_2^{\mathrm{int}}} \| \widehat{\psi_\nu} \|_{L^2(\R^2 \setminus S)}^2 
&\lesssim_s 
\sum_{j=0}^{j_{\max}} \sum_{k=1}^{2^{j_{\max}-j}} 2^{2j} 2^{-2j} \exp(-c'' A^{1/s} \log(1/\delta)) \\
& \leq 
2^{j_{\max}+1} \exp(-c'' A^{1/s} \log(1/\delta)).
\end{aligned}
\]
We assume $A$ is sufficiently large so that $c'' A^{1/s} \geq 2$, hence the above implies
\begin{equation}\label{eqn:energyI2}
\sum_{(j,k,\mathbf{m}) \in \cI_2^{\mathrm{int}}} \| \widehat{\psi_\nu} \|_{L^2(\R^2 \setminus S)}^2 \lesssim_s 2^{j_{\max}} \delta^2 
= R \delta^2.
\end{equation}
Combining \eqref{eqn:energyI1} and \eqref{eqn:energyI2} gives the desired result.
\end{proof}

\subsection{Localization of boundary wave packets $(j<0)$.}\label{sec:energy_type2}

All boundary wave packets are either placed in \(\cI_1\) (high frequency, small norm in \(S\)) or \(\cI_3\) (residual). The goal of this section is to prove Lemma \ref{lem:type2_decay} below, which establishes the decay of the Fourier transform of boundary wave packets. We will then use these bounds for categorization. 

We will require a formula for the Fourier transform of a tensor product $u(r,\theta) = f(r) g (\theta)$, where $(r,\theta) = (|x|, \arg(x))$ are radial coordinates for the spatial variable $x$. If $(\rho, \phi)$ denote radial coordinates for the frequency variable $\xi$, then the Fourier transform $\hat u(\xi) = \frac{1}{2\pi} 
 \int_{\mathbb{R}^2} u(x) e^{- i\xi\cdot x} dx$ satisfies
$$
\hat u(\rho, \phi) = \frac{1}{2\pi} \int_0^{2\pi} \int_0^\infty u(r,\theta) e^{-i r\rho\cos(\theta-\phi)} rd\theta dr. 
$$
By the Jacobi-Anger expansion 
$$e^{-i r\rho\cos(\theta-\phi)}=\sum_{n\in\mathbb{Z}} i^{-n}J_n(r\rho) e^{in(\theta-\phi)},$$
where $J_n$ is the Bessel function of integer order $n$. So, we obtain
\begin{equation}
\label{Bessel-expansion}
\widehat{u}(\rho,\phi) = \frac{1}{2\pi} 
 \sum_{n \in \Z} i^{-n} e^{-in \phi} \left[ \int_0^\infty f(r) J_n(r \rho) r dr \right] \left[ \int_0^{2 \pi} g(\theta) e^{i n \theta} d \theta \right].
\end{equation}

When \(j<0\) the boundary wave packets are defined in \eqref{eq:type2wave}; by using the expansion \eqref{Bessel-expansion},
\[
\begin{aligned}
\widehat{\psi_{j,k,\mathbf{m}}}(\xi)
&= \frac{1}{2\pi} \sum_{n\in\Z} i^{-n} e^{-in\phi}
\Bigl[
\int_0^\infty 2^{-j/2} \phi_j(r) e^{i m_1 2^{-j-1} r}
 J_n(r\rho) r dr
\Bigr]
\Bigl[
\int_0^{2\pi} \eta_{0,k}(\theta) e^{i( m_2 R/2 + n)\theta}\,d\theta
\Bigr] \\
&= \frac{1}{2\pi} 
 \sum_{n\in\Z} i^{-n} e^{-in\phi} \Psi^{\mathrm{rad}}_{j,m_1,n}(\rho) \cdot \Psi^{\mathrm{ang}}_{k,m_2,n} \quad (\rho = |\xi|, \phi = \arg(\xi)),
\end{aligned}
\]
where 
\begin{align}\label{rad}
\Psi^{\mathrm{rad}}_{j,m_1,n}(\rho)
&:= \int_0^\infty 2^{-j/2} r \phi_j(r) e^{i m_1 2^{-j-1} r} J_n(r\rho)\,dr,
\\\label{ang}
\Psi^{\mathrm{ang}}_{k,m_2,n}
&:= \int_0^{2\pi} \eta_{0,k}(\theta) 
e^{i(m_2 R/2 + n)\theta}\,d\theta.
\end{align}

We first consider the bounds on the angular factor $\Psi^{\mathrm{ang}}_{k,m_2,n}$. 
\begin{lemma}[Decay of angular factor]\label{lem:angular-bound}
There exist constants $C,c > 0$ determined by $s$, so that
\begin{equation}\label{eqn:angular_decay}
\bigl|\Psi^{\mathrm{ang}}_{k,m_2,n}\bigr|
\leq C 
R^{-1} \exp\!\Bigl(-c \bigl|m_2/2 + n/R\bigr|^{1/s}\Bigr) 
\quad \textrm{for } k=1,\cdots,m(j) \textrm{ and } m_2,n \in \Z.
\end{equation}
\end{lemma}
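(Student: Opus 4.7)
The plan is to recognize $\Psi^{\mathrm{ang}}_{k,m_2,n}$ as (essentially) the Fourier coefficient of the angular cutoff $\eta_{0,k}$ at integer frequency $\omega := m_2 R/2 + n$, and to deduce exponential decay in $|\omega|/R$ from the Gevrey regularity of $\eta_{0,k}$ together with its compact support. Note that $R$ is a dyadic integer and $R/2\in\Z$, so $\omega\in\Z$; also observe that $\omega/R = m_2/2 + n/R$, so the exponent in the target bound is precisely $(|\omega|/R)^{1/s}$.

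First I will read off from condition (A3) applied at $j=0$ (where $m(0)=R$) the derivative estimate
\[
\|\partial_\theta^p \eta_{0,k}\|_{L^\infty} \leq C_1 C_2^p (p!)^s R^p \qquad (p\geq 0),
\]
which says that $\eta_{0,k}$ is a Gevrey--$s$ function with constants $(C_1, C_2 R)$. Its support $\Theta_{0,k}^\ast\subset S^1$ has Lebesgue measure $1.1 \cdot 2\pi/R \lesssim R^{-1}$, which will supply the prefactor $R^{-1}$ in the final bound.

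Next I will apply the one--dimensional Gevrey Fourier decay bound (Lemma~\ref{lemma:Gevrey:Fourier_decay}) to $\eta_{0,k}$, exactly as is done in the proof of Lemma~\ref{lem:typeI} for the interior packets. Equivalently, integrating by parts $p$ times against $e^{i\omega\theta}$ and using the compact support together with the Gevrey bound yields
\[
|\Psi^{\mathrm{ang}}_{k,m_2,n}|
\leq \frac{C_1 (C_2 R)^p (p!)^s \cdot |\Theta_{0,k}^\ast|}{|\omega|^p}
\lesssim R^{-1} \cdot \frac{(C_2 R)^p (p!)^s}{|\omega|^p}
\qquad (p\geq 0).
\]
Optimizing over $p$ via Stirling (taking $p\sim (|\omega|/(eC_2 R))^{1/s}$) converts this polynomial decay into exponential decay of the form $\exp(-c(|\omega|/R)^{1/s})$, which is the required bound once one rewrites $|\omega|/R = |m_2/2 + n/R|$. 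The constants $C$ and $c$ depend only on $s$ and on the absolute constants $C_1,C_2$ from (A3).

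There is no substantial analytic obstacle; the argument is a direct application of the Gevrey Fourier decay mechanism already invoked for the interior packets. The only point requiring care is the bookkeeping of scales: the angular cutoff has effective frequency scale $R$, so the natural decay variable is $\omega/R$, not $\omega$. The low--frequency regime $|\omega|\lesssim R$ is trivial, since then the exponential factor on the right--hand side of \eqref{eqn:angular_decay} is bounded below by an absolute constant while the crude estimate $|\Psi^{\mathrm{ang}}_{k,m_2,n}|\leq \|\eta_{0,k}\|_{L^\infty}\,|\Theta_{0,k}^\ast|\lesssim R^{-1}$ already suffices. The substantive content lies in the complementary regime $|\omega|\gg R$, handled by the integration--by--parts/Gevrey argument above.
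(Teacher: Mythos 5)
Your proof is correct and follows essentially the same route as the paper: identify $\Psi^{\mathrm{ang}}_{k,m_2,n}$ as $2\pi$ times a Fourier coefficient of $\eta_{0,k}$ at integer frequency $-(m_2R/2+n)$, use (A3) at $j=0$ to see $\eta_{0,k}$ is Gevrey--$s$ with constants $(C_1,C_2R)$ and support of measure $\approx R^{-1}$, and then invoke the Gevrey Fourier decay bound. The one small mismatch is that you cite Lemma~\ref{lemma:Gevrey:Fourier_decay} (the $\mathbb{R}^d$ version) whereas the paper uses Lemma~\ref{lemma:Gevrey:Fourier_decay_periodic} (the circle version, appropriate since $\eta_{0,k}\in C^\infty(S^1)$ and the relevant object is a Fourier coefficient, not a transform); this is harmless because you also sketch the integration-by-parts argument directly, and $\eta_{0,k}$'s support fits inside a fundamental domain, but the periodic lemma is the cleaner reference.
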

\begin{proof}
Condition (A3) in Section \ref{sec:cutoffs} asserts that $\eta_{0,k}$ is in the Gevrey class $\mathcal{G}^s$ with constants $(C_1,C_2R)$. (Recall that $m(0) = 2^{j_{\max}} = R$.) Furthermore, condition (A2) asserts that $\eta_{0,k}$ is supported on a set in $S^1$ of measure at most $\widetilde{C} m(0)^{-1} = \widetilde{C} R^{-1}$. Thus, according to Lemma \ref{lemma:Gevrey:Fourier_decay_periodic}, the Fourier coefficients of $\eta_{0,k}$ satisfy the decay bound
\begin{equation}\label{eq:Gevrey--angular}
\abs{\widehat{\eta_{0,k}}(n)}\leq C R^{-1} \exp\Big(-c \abs{n/R}^{1/s}\Big).
\end{equation}
For the angular factor,
\[
\Psi^{\mathrm{ang}}_{k,m_2,n}
= 2 \pi \cdot \widehat{\eta_{0,k}}\!\bigl(- ( m_2 R/2 + n)\bigr).
\]
Using the decay estimate \eqref{eq:Gevrey--angular}, we obtain \eqref{eqn:angular_decay}.
\end{proof}

In Lemmas \ref{lem:radial-bounds1} and \ref{lem:radial-bounds2}, we state the decay estimates on the radial factor $\Psi^{\mathrm{rad}}_{j,m_1,n}$ for large $n$ and large $m_1$. For this, we need the following elementary estimates for the Bessel functions.

\begin{lemma}[Bessel function estimates]\label{b.Bessel} For all $n\in \Bbb Z$, $n \neq 0$, $|t| < |n|$, 
\begin{equation}\label{first-bound}
|J_n(t)| \leq 
 \left( \frac{e^2 |t|}{2|n|} \right)^{|n|}.
\end{equation}
For all $n\in \Bbb Z$, $k \geq 0$, and $t \in \R$,
\begin{equation}\label{second-bound}
|\partial^k_t J_n(t)|\le 1.
\end{equation}
\end{lemma}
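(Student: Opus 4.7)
The plan is to prove the two bounds using standard representations of the Bessel function. For \eqref{first-bound}, I would use the power series expansion, and for \eqref{second-bound} I would use the Bessel integral representation and differentiate under the integral sign.

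For \eqref{first-bound}, I would first reduce to the case $n>0$ using the symmetry $J_{-n}(t)=(-1)^n J_n(t)$ for integer $n$, which preserves absolute values. I would then use the power series representation
\[
J_n(t) = \sum_{m=0}^\infty \frac{(-1)^m}{m!(m+n)!} \Bigl(\frac{t}{2}\Bigr)^{2m+n},
\]
apply the triangle inequality, and factor out the leading term to obtain
\[
|J_n(t)| \leq \frac{(|t|/2)^n}{n!} \sum_{m=0}^\infty \frac{(|t|/2)^{2m}}{m!\,(n+1)(n+2)\cdots(n+m)}.
\]
Under the hypothesis $|t|<n$, I would bound each factor $(n+k)\geq n$, so the series is controlled by $\sum_m \bigl((|t|/2)^2/n\bigr)^m/m! = \exp\bigl((|t|/2)^2/n\bigr) \leq e^{n/4}$. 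Combining this with the elementary Stirling estimate $n!\geq (n/e)^n$ produces $|J_n(t)|\leq (e|t|/(2n))^n\cdot e^{n/4}$, and since $e^{5/4}\leq e^2$, this gives \eqref{first-bound}.

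For \eqref{second-bound}, I would invoke the classical integral representation
\[
J_n(t) = \frac{1}{2\pi}\int_0^{2\pi} e^{i(t\sin\theta - n\theta)}\,d\theta,
\]
valid for all integers $n$ and real $t$. Because the integrand is smooth in $t$ and its derivatives are uniformly bounded on the compact interval $[0,2\pi]$, differentiation under the integral sign is immediate, yielding
\[
\partial_t^k J_n(t) = \frac{1}{2\pi}\int_0^{2\pi} (i\sin\theta)^k\, e^{i(t\sin\theta - n\theta)}\,d\theta.
\]
Taking absolute values and using $|\sin\theta|\leq 1$ gives $|\partial_t^k J_n(t)|\leq 1$.

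Both estimates are essentially routine consequences of classical formulas; the only mild subtlety is tracking the constant in \eqref{first-bound}. The crude Stirling bound $n!\geq (n/e)^n$ combined with the loose estimate $e^{1+1/4}\leq e^2$ is amply sufficient for later applications, so I do not anticipate any real obstacle. A sharper constant could be obtained via the saddle-point/steepest-descent form of the integral representation, but this is not needed for the present purposes.
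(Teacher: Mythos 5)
Your proof is correct and follows essentially the same route as the paper: for \eqref{first-bound}, both arguments expand the power series, bound the factorial growth in the tail (you use $(n+1)\cdots(n+m)\ge n^m$, the paper uses $(n+k)!\ge n!(n+1)^k$, leading to equivalent exponential bounds under $|t|<n$), and apply $n!\ge(n/e)^n$; for \eqref{second-bound}, both differentiate a Bessel integral representation under the integral sign and bound the resulting integrand by $|\sin\theta|^k\le 1$ (the paper uses the real cosine form over $[0,\pi]$ rather than the complex exponential form over $[0,2\pi]$, but these are interchangeable).
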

\begin{proof} 
The series representation of the Bessel function $J_n(t)$ for integral $n > 0$ is
\[
J_n(t) = \sum_{k=0}^\infty \frac{(-1)^k}{k! (n+k)! } \left( \frac{t}{2} \right)^{n+2k}
\]
Using that $(n+k)! \geq n ! \cdot (n+1)^k $ and $n! \geq (n/e)^n$,
\[
\begin{aligned}
|J_n(t)| \leq \sum_{k=0}^\infty \frac{1}{k! (n+k)! } \left( \frac{|t|}{2} \right)^{n+2k} &\leq \left(\frac{e|t|}{2n}\right)^n \sum_{k=0}^\infty \frac{1}{k! (n+1)^k } \left( \frac{|t|}{2} \right)^{2k} \\
&= \left(\frac{e|t|}{2n}\right)^n e^{t^2/(4(n+1))} \mbox{ for } n > 0.
\end{aligned}
\]
Using that $J_{-n}(t) = (-1)^n J_n(t)$, we deduce that
\[
|J_n(t)| \leq e^{t^2/(4(|n|+1))} \left( \frac{e|t|}{2|n|} \right)^{|n|} \mbox{ for all }\
 n \in \Z, n \neq 0.
\]
In particular,
\[
|J_n(t)| \leq e^{|n|/4} \left( \frac{e |t|}{2|n|} \right)^{|n|} 
\leq \left( \frac{e^2 |t|}{2|n|} \right)^{|n|} \quad \mbox{ for } |t| < |n|,
\]
completing the proof of \eqref{first-bound}.

For $n\in\mathbb{Z}$, consider the integral representation of $J_n$,
\[
J_n(t)=\frac{1}{\pi}\int_{0}^{\pi}\cos\!\big(n\theta-t\sin\theta\big)\,d\theta,
\qquad t\in\mathbb{R}.
\]
For $k\in\mathbb{N}$, by the Chain rule we have $|\partial_t^k \cos\!\big(n\theta-t\sin\theta\big)| \leq |\sin \theta|^k$. Thus,
\[
|\partial_t^k J_n(t)| =\frac{1}{\pi} \left| \int_{0}^{\pi}\partial_t^k \cos\!\big(n\theta-t\sin\theta\big)\,d\theta \right| \leq \frac{1}{\pi} \int_0^\pi |\sin(\theta)|^k \,  d \theta \leq 1,
\]
completing the proof of \eqref{second-bound}. Note this proof actually yields the refinement $|\partial_t^k J_n(t)| \leq c_k < 1$ for $k \geq 1$. \end{proof}

\begin{lemma}[Decay of radial factor in $n$]\label{lem:radial-bounds1} 
For any $0\leq \rho\leq 1$, $j<0$, and $m_1, n \in \Z$, we have 
\[
\bigl|\Psi^{\mathrm{rad}}_{j,m_1,n}(\rho)\bigr|
\leq R 2^{j/2} \min\{ 1, (10R/|n|)^{|n|} \}.
\]
\end{lemma}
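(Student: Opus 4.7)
The plan is to move the absolute value inside the integral defining $\Psi^{\mathrm{rad}}_{j,m_1,n}(\rho)$ and to estimate each factor of the integrand separately. On $\supp(\phi_j) = I_j$ I have $r \leq R$ (since for $j<0$ the interval $I_j$ is contained in $[0,R]$), $0 \leq \phi_j \leq 1$ (this follows from condition (R1) together with non--negativity of $\phi_j$), $|I_j| = 0.65 \cdot 2^j \leq 2^j$, and the complex exponential $e^{i m_1 2^{-j-1} r}$ has modulus $1$. Hence only the Bessel factor $|J_n(r\rho)|$ requires a nontrivial estimate, and the two bounds appearing inside the $\min$ will come from two different estimates for this factor.

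For the universal bound $R\,2^{j/2}$ I would invoke the trivial estimate $|J_n(t)| \leq 1$ from Lemma~\ref{b.Bessel} (equation \eqref{second-bound} with $k=0$), giving
\[
|\Psi^{\mathrm{rad}}_{j,m_1,n}(\rho)|
\;\leq\; 2^{-j/2} \int_{I_j} r\,\phi_j(r)\,dr
\;\leq\; 2^{-j/2} \cdot R \cdot 2^j
\;=\; R\,2^{j/2}.
\]
For the decaying bound, I use that $\rho \leq 1$ and $r \leq R$ imply $|r\rho| \leq R$, so when $|n| > R$ the hypothesis $|t| < |n|$ of \eqref{first-bound} of Lemma~\ref{b.Bessel} is satisfied. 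Applying that estimate yields
\[
|J_n(r\rho)|
\;\leq\; \left(\frac{e^2 r\rho}{2|n|}\right)^{|n|}
\;\leq\; \left(\frac{e^2 R}{2|n|}\right)^{|n|}
\;\leq\; \left(\frac{10R}{|n|}\right)^{|n|},
\]
where the last inequality uses $e^2/2 < 10$. Inserting this into the integral as before produces $|\Psi^{\mathrm{rad}}_{j,m_1,n}(\rho)| \leq R\,2^{j/2}\,(10R/|n|)^{|n|}$.

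To finish, I would combine the two estimates. If $|n| > R$ the second bound is exactly the claimed inequality, and it is the sharper of the two precisely when $|n| > 10R$. If $|n| \leq R$, then $(10R/|n|)^{|n|} \geq 1$ (trivially when $n=0$, and because $10R/|n| \geq 10$ otherwise), so $\min\{1, (10R/|n|)^{|n|}\} = 1$ and the universal bound already gives the claim. This covers every $n \in \Z$. The only subtlety is arithmetic: checking that the geometric constant $0.65$ from $|I_j|$ and the analytic constant $e^2/2$ from \eqref{first-bound} both fit comfortably inside the round factor $10$ in the statement, which they do with ample slack, so no serious obstacle arises.
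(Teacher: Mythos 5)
Your proof is correct and follows essentially the same route as the paper's: take absolute values inside the integral, use $r\le R$, $0\le\phi_j\le 1$, $|I_j|\le 2^j$, and then split on $|n|$ using the two Bessel estimates in Lemma~\ref{b.Bessel}. The only cosmetic difference is that you invoke the decaying bound as soon as $|n|>R$ (where the hypothesis $|t|<|n|$ of \eqref{first-bound} is already met since $r\rho\le R$), whereas the paper uses the threshold $|n|\ge 10R$; both choices work because the $\min$ with $1$ absorbs the intermediate range.
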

\begin{proof}
For the first bound (decay in $n$), observe that by \eqref{rad}
\[
|\Psi^{\mathrm{rad}}_{j,m_1,n}(\rho)| \leq \int_0^\infty 2^{-j/2} r \phi_j(r) |J_n(r \rho)| d r,
\]
where the $\phi_j$ is supported on the interval $I_j = [R - 1.1 \cdot 2^j, R - 0.9\cdot 2^{j-1}]$ of size at most $2^j$ and $0 \leq \phi_j \leq 1$. Hence, $r\leq R$ in the above integral. And also, $\rho \leq 1$. So, $r \rho \leq R$. Provided $|n| \geq 10 R$ we have $|n| > r \rho$, and so by \eqref{first-bound},
\[
|\Psi^{\mathrm{rad}}_{j,m_1,n}(\rho)| \leq R 2^{j/2} \left( \frac{e^2 r \rho}{2|n|} \right)^{|n|} \leq R 2^{j/2} \left( \frac{10R}{|n|} \right)^{|n|}.
\]
On the other hand, for $|n| \leq 10R$ we have the complementary bound, using that $|J_n(r \rho)| \leq 1$,
\[
|\Psi^{\mathrm{rad}}_{j,m_1,n}(\rho)| \leq R 2^{j/2}.
\]
Combining the two estimates for the different ranges of $n$, we obtain the bound
\[
|\Psi^{\mathrm{rad}}_{j,m_1,n}(\rho)| \leq R 2^{j/2} \min \{1, (10 R/|n|)^{|n|} \}.
\]
\end{proof}
\begin{lemma}[Decay of radial factor in $m_1$]\label{lem:radial-bounds2} 
There exist constants $c, C > 0$ determined by $s$ such that the following holds. For any $0\leq \rho\leq 1$, $j<0$, $m_1, n \in \Z$, 
\[
\bigl|\Psi^{\mathrm{rad}}_{j,m_1,n}(\rho)\bigr|
\leq C R 2^{j/2} \exp(- c |m_1|^{1/s}).
\]
\end{lemma}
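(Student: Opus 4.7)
The plan is to obtain the near--exponential decay in $|m_1|$ by repeated integration by parts in the defining integral for $\Psi^{\mathrm{rad}}_{j,m_1,n}(\rho)$, exploiting the oscillatory phase $e^{i m_1 2^{-j-1} r}$ together with the Gevrey--$s$ regularity of the radial cutoff $\phi_j$ (condition (R3)) and the bounded derivatives of $J_n$ provided by Lemma~\ref{b.Bessel}. Write
\[
\Psi^{\mathrm{rad}}_{j,m_1,n}(\rho) = 2^{-j/2} \int_{\R} b(r)\, e^{i\lambda r}\,dr, \qquad b(r) := r\,\phi_j(r)\,J_n(r\rho), \qquad \lambda := m_1\, 2^{-j-1}.
\]
The amplitude $b$ is smooth and compactly supported in $I_j$, with $|\supp b| \lesssim 2^j$ and $|r| \leq R$ there. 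Since $\phi_j \in C_c^\infty(I_j)$, integration by parts $k$ times introduces no boundary contributions and yields
\[
\Bigl| \int_{\R} b(r)\, e^{i\lambda r}\,dr \Bigr| \leq |\lambda|^{-k} \cdot |\supp b| \cdot \|b^{(k)}\|_\infty.
\]

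The main technical step is controlling $\|b^{(k)}\|_\infty$. By the Leibniz rule applied to the triple product $r \cdot \phi_j(r) \cdot J_n(r\rho)$, using (R3) for the Gevrey derivatives of $\phi_j$ at scale $2^{-j}$, the estimate $|(d/dr)^q J_n(r\rho)| = \rho^q |J_n^{(q)}(r\rho)| \leq \rho^q \leq 1$ from \eqref{second-bound} (here $\rho \leq 1$ is critical), and the fact that $r$ has at most one nonzero derivative, a routine computation gives
\[
\|b^{(k)}\|_\infty \leq C_s R \cdot A_s^k (k!)^s\, 2^{-jk},
\]
for constants $C_s, A_s > 0$ depending only on $s$. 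The bundling of the Leibniz binomial coefficients into a single factor $(k!)^s$ uses $(p!)^s / p! \leq (k!)^s / k!$ for $p \leq k$, while $2^{-jp} \leq 2^{-jk}$ holds because $-j > 0$. Substituting $|\lambda|^{-k} = |m_1|^{-k}\, 2^{(j+1)k}$ and $|\supp b| \lesssim 2^j$ into the integration by parts bound yields, for every $k \geq 1$ and every $m_1 \neq 0$,
\[
|\Psi^{\mathrm{rad}}_{j,m_1,n}(\rho)| \leq C_s' R\, 2^{j/2} \cdot \bigl((2A_s)^k (k!)^s |m_1|^{-k}\bigr).
\]

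The final step is to optimize $k$ so that the parenthesized factor is $\lesssim \exp(-c|m_1|^{1/s})$. Choosing $k = \lfloor (|m_1|/D)^{1/s} \rfloor$ with $D$ a sufficiently large constant depending on $s$, and using the crude bound $k! \leq k^k$, one obtains
\[
(2A_s)^k (k!)^s |m_1|^{-k} \leq \bigl(2 A_s k^s / |m_1|\bigr)^k \leq (2A_s/D)^k \leq e^{-c |m_1|^{1/s}},
\]
with $D$ chosen so that $2A_s/D \leq e^{-1}$. For the remaining range of small $|m_1|$ (where the above integer $k$ is zero), the trivial estimate $|\Psi^{\mathrm{rad}}_{j,m_1,n}(\rho)| \leq 2^{-j/2} \|b\|_\infty |\supp b| \lesssim R\, 2^{j/2}$ already dominates the target bound after absorbing the bounded factor $e^{c |m_1|^{1/s}}$ into the constant $C$. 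The only part that requires real care is the Leibniz calculation for $\|b^{(k)}\|_\infty$: one must verify that the Gevrey constants remain $s$--dependent only (not $j$--dependent) and that the scale $2^{-jk}$ propagates correctly, exactly in the spirit of the derivative bounds from Lemma~\ref{derivative--bounds} used for the interior packets in Lemma~\ref{lem:typeI}.
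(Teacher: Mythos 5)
Your proposal is correct and takes essentially the same approach as the paper: the paper rewrites $\Psi^{\mathrm{rad}}_{j,m_1,n}(\rho)$ as $(2\pi)^{1/2}2^{-j/2}R\,\widehat{q}(-m_12^{-j-1})$ with $q(r)=(r/R)\phi_j(r)J_n(r\rho)$, shows $q$ is Gevrey--$s$ at scale $2^{-j}$ via Lemma~\ref{lemma:Gevrey:multiplication}, and then applies Lemma~\ref{lemma:Gevrey:Fourier_decay}. You have simply unrolled those two lemmas into the direct Leibniz estimate on $b^{(k)}$ and the explicit integration-by-parts-plus-optimization-in-$k$ argument (the optimization being the content of Lemma~\ref{lem:tech}), so the mathematical content is identical.
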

\begin{proof}
For fixed $\rho$, we first estimate the Fourier transform of the function $q(r) = (r/R) \phi_j(r) J_n(r \rho)$. According to property (R2) in Section \ref{sec:cutoffs}, the function $q$ is supported on an interval $I_j = [R - 1.1 \cdot 2^j, R - 0.9 \cdot 2^{j-1}]$ of size at most $2^j$ contained in $[0,R]$. Property (R3) in Section \ref{sec:cutoffs} asserts that
\[
| \partial^k_r \phi_j(r)| \leq 2^{-j k} C_1 C_2^k (k!)^s.
\]
That is, the function $\phi_j(r)$ is Gevrey--$s$ with constants $(C_1,C_22^{-j})$ on the domain $[0,R]$, where $C_1,C_2$ are constants determined by $s$. Similarly, the function $r \mapsto r/R$ is Gevrey-$s$ with constants $(1,1)$ on the domain $[0,R]$. Finally, the function $r \mapsto J_n(r \rho)$ is Gevrey--$s$ with constants $(1,1)$ on $[0,R]$ (see \eqref{second-bound}). 

According to Lemma \ref{lemma:Gevrey:multiplication}, we deduce that $q(r)$ is Gevrey--$s$ with constants $(C_1,4\max\{C_2 2^{-j},1\})$. Hence, since $2^{-j} \geq 1$, we deduce that $q(r)$ is Gevrey--$s$ with constants $(A,B 2^{-j})$ for constants $A = C_1,B = \max\{4C_2,1\}$ determined by $s$. Furthermore, this function is supported on an interval of size at most $2^j$ contained in $[0,R]$. According to Lemma \ref{lemma:Gevrey:Fourier_decay}, 
\[
|\widehat{q}(\omega)| \leq C 2^{j} \exp(-c | 2^{j} \omega|^{1/s}),
\]
where $C,c > 0$ depend on $s$. By \eqref{rad}, $\Psi^{\mathrm{rad}}_{j,m_1,n}(\rho)$ is related to the Fourier transform of $q$ via
\[
\Psi^{\mathrm{rad}}_{j,m_1,n}(\rho) = (2 \pi)^{1/2} 2^{-j/2} R \cdot \widehat{q}(- m_1 2^{-j-1}),
\]
yielding that
\[
|\Psi^{\mathrm{rad}}_{j,m_1,n}(\rho)| \leq C R 2^{j/2} \exp(-c' | m_1 |^{1/s}),
\]
with $c' = c/2^{1/s}$, proving the desired bound.
\end{proof}

\begin{lemma}\label{lem:type2_decay}
For any $\mathbf{m} \in \Z^2$, $j < 0$, $k = 1,\cdots, 2^{j_{\max}}$, and any $\xi \in \R^2$, $|\xi| \leq 1$,
\[
| \widehat{\psi_{j,k,\mathbf{m}}}(\xi)| \leq C R 2^{j/2} \exp(-c |m_1|^{1/s}) \exp(-c |m_2|^{1/s}),
\]
where $C,c > 0$ are constants determined by $s$.
\end{lemma}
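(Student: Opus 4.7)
The plan is to combine the Bessel expansion preceding Lemma~\ref{lem:angular-bound} with the three pointwise bounds from Lemmas~\ref{lem:angular-bound}, \ref{lem:radial-bounds1}, and \ref{lem:radial-bounds2}. Starting from the triangle inequality
\[
|\widehat{\psi_{j,k,\mathbf{m}}}(\xi)| \leq \frac{1}{2\pi}\sum_{n\in\Z} |\Psi^{\mathrm{rad}}_{j,m_1,n}(\rho)|\cdot|\Psi^{\mathrm{ang}}_{k,m_2,n}|,
\]
I first note that the two radial bounds are complementary: Lemma~\ref{lem:radial-bounds1} gives super--exponential decay in $|n|$ but offers no control in $m_1$, while Lemma~\ref{lem:radial-bounds2} gives Gevrey decay in $|m_1|$ uniformly in $n$. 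Since both hold for every $n$, I would combine them via the elementary inequality $\min(A,B)\le\sqrt{AB}$ into the single estimate
\[
|\Psi^{\mathrm{rad}}_{j,m_1,n}(\rho)| \leq C\,R\,2^{j/2}\,\min\{1,(10R/|n|)^{|n|/2}\}\exp\!\bigl(-\tfrac{c}{2}|m_1|^{1/s}\bigr),
\]
valid for all $n$. Pulling the $m_1$--factor outside the sum and inserting the angular bound from Lemma~\ref{lem:angular-bound} reduces the claim to showing
\[
\Sigma := \sum_{n\in\Z}\min\{1,(10R/|n|)^{|n|/2}\}\exp\!\bigl(-c|m_2/2+n/R|^{1/s}\bigr) \lesssim_{s} R\exp(-c'|m_2|^{1/s}).
\]

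I would split $\Sigma$ at $|n|=10R$. On $|n|\le 10R$ the min factor equals $1$, and the residual sum is a Riemann sum of step $1/R$ of the rapidly decaying integrand $\exp(-c|y|^{1/s})$. When $|m_2|\ge 40$, every $y=m_2/2+n/R$ in the window satisfies $|y|\ge |m_2|/4$, so the piece is at most $(20R+1)\exp\!\bigl(-c(|m_2|/4)^{1/s}\bigr)\lesssim_s R\exp(-c'|m_2|^{1/s})$; when $|m_2|<40$, the standard Riemann--sum bound gives $\lesssim_s R$, which is absorbed into $R\exp(-c'|m_2|^{1/s})$ after enlarging the constant. On $|n|>10R$, I would apply subadditivity of $t\mapsto t^{1/s}$ (valid for $s>1$), namely $(|m_2|/2)^{1/s}\le |m_2/2+n/R|^{1/s}+|n/R|^{1/s}$, to extract $\exp\!\bigl(-c(|m_2|/2)^{1/s}\bigr)$, leaving the residual
\[
\sum_{|n|>10R}(10R/|n|)^{|n|/2}\exp(c|n/R|^{1/s}).
\]
After the substitution $u=|n|/(10R)\ge 1$, this sum is controlled by $10R\int_1^\infty \exp\!\bigl(-5Ru\ln u+c(10u)^{1/s}\bigr)du$, which is $\lesssim_s 1$ since for $R\ge 2$ the term $5Ru\ln u$ dominates $c(10u)^{1/s}$ away from $u=1$ and contributes an integrable rate of order $R$ near $u=1$.

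The main obstacle is obtaining the $\exp(-c|m_2|^{1/s})$ decay without losing the linear prefactor $R$. The naive difficulty is that phase cancellation permits $\Psi^{\mathrm{ang}}_{k,m_2,n}$ to be as large as $CR^{-1}$ (with no $m_2$--decay) precisely when $n\approx -m_2R/2$, and such $n$'s have $|n|$ of order $|m_2|R$. The resolution is the threshold at $|m_2|\approx 20$ built into Lemma~\ref{lem:radial-bounds1}: once $|m_2|\ge 40$, the dangerous $n$'s already lie in the tail regime $|n|>10R$, where the radial factor forces super--exponential decay in $|n|$ that outweighs the angular non--decay; for $|m_2|<40$, the target factor $\exp(-c'|m_2|^{1/s})$ is itself bounded below and absorbs any universal constant. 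Combining this with the $\exp\!\bigl(-(c/2)|m_1|^{1/s}\bigr)$ already extracted, the prefactor $1/(2\pi)$, and the trivial bound $2^{j/2}\le 2^{-1/2}$ for $j<0$, yields the asserted estimate.
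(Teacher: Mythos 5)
Your proposal is correct, and it reaches the same bound by a genuinely different decomposition. The paper first proves the estimate $|\widehat{\psi_{j,k,\mathbf m}}(\xi)|\lesssim_s R\,2^{j/2}\exp(-c|m_2|^{1/s})$ by splitting the $n$-sum according to the size of the angular phase offset, into $H_1=\{n:|m_2/2+n/R|>|m_2|/4\}$ (where the angular factor decays) and $H_2=\{n:|m_2/2+n/R|\le|m_2|/4\}$ (where the radial factor decays because necessarily $|n|\ge R|m_2|/4$); it then proves a separate $m_1$-estimate from Lemma~\ref{lem:radial-bounds2} and takes a geometric mean of the two complete bounds. You instead merge the two radial lemmas at the level of individual terms via $\min(A,B)\le\sqrt{AB}$, extracting the $m_1$-decay immediately, and then split the $n$-sum at the natural radial threshold $|n|=10R$: on $|n|\le 10R$ the triangle inequality $|m_2/2+n/R|\ge|m_2|/4$ (for $|m_2|\ge 40$) gives the $m_2$-decay from the angular factor, and on $|n|>10R$ subadditivity of $t\mapsto t^{1/s}$ extracts $\exp(-c(|m_2|/2)^{1/s})$ while the surviving super-geometric term $(10R/|n|)^{|n|/2}\exp(c|n/R|^{1/s})$ sums to an $s$-dependent constant. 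Both splits encode the same cancellation structure — the dangerous $n\approx -m_2R/2$ lie deep in the tail where the Bessel bound kills the sum — but your version dispenses with the final geometric mean step by applying it term-by-term at the start.

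One small polish you might consider: the Riemann-sum approximation near the threshold $|n|=10R$ can be replaced by the cleaner direct estimate
\[
\Big(\tfrac{10R}{10R+k}\Big)^{(10R+k)/2}=\exp\!\Big(-\tfrac{10R+k}{2}\log\big(1+\tfrac{k}{10R}\big)\Big)\le e^{-k/2},\qquad k\ge 1,
\]
which together with $\big((10R+k)/R\big)^{1/s}\le (10+k)^{1/s}$ shows at once that $\sum_{|n|>10R}(10R/|n|)^{|n|/2}\exp(c|n/R|^{1/s})\le 2\sum_{k\ge1}e^{-k/2+c(10+k)^{1/s}}\lesssim_s 1$, avoiding any monotonicity discussion for the sum-to-integral step.
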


\begin{proof}
By the triangle inequality,
\[
|\widehat{\psi_{j,k,\mathbf{m}}}(\xi)| \leq \sum_{n \in \Z} | \Psi^{\mathrm{rad}}_{j,m_1,n}(\rho)| | \Psi^{\mathrm{ang}}_{k,m_2,n}|.
\]
By Lemma \ref{lem:angular-bound} and Lemma \ref{lem:radial-bounds1},
\[
\begin{aligned}
|\widehat{\psi_{j,k,\mathbf{m}}}(\xi)| &\lesssim_s \sum_{n \in \Z} R 2^{j/2} \min\{ 1, (10 R/|n|)^{|n|}\} R^{-1} \exp(-c | m_2/2 + n/R|^{1/s}) \\
& =  \sum_{n \in \Z} 2^{j/2} \min\{ 1, (10 R/|n|)^{|n|}\} \exp(-c | m_2/2 + n/R|^{1/s}).
\end{aligned}
\]
We split the sum on $n$ into two bins, \[
\begin{aligned}
&H_1 = \{ n : |m_2/2 + n/R| > |m_2|/4 \}, \\
&H_2 = \{ n : |m_2/2 + n/R | \leq |m_2|/4\}.
\end{aligned}
\]

Observe that
\begin{align*}
\sum_{n \in H_1} \exp\!\big(- c | m_2/2 + n/R|^{1/s}\big)
&= \sum_{p : |p/R| >|m_2|/4} \exp\!\big(-c |p/R|^{1/s}\big) \\
&\lesssim R \sum_{k \geq |m_2|/4} \exp\!\big(-c \cdot k ^{1/s}\big) \\
&\lesssim R \exp\!\big(-c' |m_2|^{1/s}\big).
\end{align*}
For the first identity, we make the change of variable $p = Rm_2/2 + n$ in the series. (Observe, $Rm_2/2 \in \Z$.) For the next inequality, we split the sum on $p$ into the sum of a finite sum over intervals of $R$ consecutive integers (on each interval the summand is essentially constant, and there are $R$ integers in each interval).

Therefore, we have
\[
\begin{aligned}
\sum_{n \in H_1} 2^{j/2} \min\{ 1, (10R/|n|)^{|n|}\} \exp(-c | m_2/2 + n/R|^{1/s}) &\leq 2^{j/2} \sum_{n \in H_1} \exp(-c | m_2/2 + n/R|^{1/s}) \\
&\lesssim R 2^{j/2} \exp(-c' |m_2|^{1/s}),
\end{aligned}
\]
where in the first inequality we upper bounded the $\min \{ \cdots \}$ by $1$.

By the triangle inequality,
\begin{equation}\label{eq:n-over-R-lower}
\left|\frac{n}{R}\right|
\ge \left| \frac{m_2}{2} \right|-\left|\frac{n}{R}+\frac{m_2}{2}\right| \geq \frac{|m_2|}{2} - \frac{|m_2|}{4} = \frac{|m_2|}{4} \qquad (n \in H_2).
\end{equation}
Consequently,
\begin{equation}\label{eq:10R-over-n}
\frac{10R}{|n|}\le \frac{C}{|m_2|} \qquad (n\in H_2).
\end{equation}
If $|m_2|\ge C$, then $C/|m_2|\le 1$, and combining \eqref{eq:n-over-R-lower}--\eqref{eq:10R-over-n} yields
\[
\left(\frac{10R}{|n|}\right)^{|n|}
\le \left(\frac{C}{|m_2|}\right)^{|n|}
\le \left(\frac{C}{|m_2|}\right)^{R|m_2|/4}
\le \left(\frac{C}{|m_2|}\right)^{|m_2|/4}.
\]
If instead $|m_2|\le C$, then trivially
\[
1 \le \left(\frac{C}{|m_2|}\right)^{|m_2|/4}.
\]
Thus, in all cases,
\begin{equation}\label{eq:min-bound}
\min\!\left\{1,\left(\frac{10R}{|n|}\right)^{|n|}\right\}
\le \left(\frac{C}{|m_2|}\right)^{|m_2|/4}
\qquad (n\in H_2).
\end{equation}

Moreover, $\#H_2\le C|m_2|R$. Since $\exp(\cdots)\le 1$, we deduce
\begin{align*}
\sum_{n\in H_2} 2^{j/2}\min\!\left\{1,\left(\frac{10R}{|n|}\right)^{|n|}\right\}
\exp\!\big(-c|m_2/2+n/R|^{1/s}\big)
&\le \sum_{n\in H_2} 2^{j/2}\min\!\left\{1,\left(\frac{10R}{|n|}\right)^{|n|}\right\} \\
&\le 2^{j/2} (\#H_2) \left(\frac{C}{|m_2|}\right)^{|m_2|/4} \\
&\le 2^{j/2} (C|m_2|R) \left(\frac{C}{|m_2|}\right)^{|m_2|/4} \\
&\lesssim R 2^{j/2}\left(\frac{C'}{|m_2|}\right)^{|m_2|/4},
\end{align*}
where in the last step we absorbed the polynomial factor $|m_2|$ into the exponential term $(C')^{|m_2|/4}$.

Combining the bounds on the sum over $H_1$ and the sum over $H_2$, we get
\[
\begin{aligned}
|\widehat{\psi_{j,k,\mathbf{m}}}(\xi)| &\lesssim_s \sum_{n \in H_1} \cdots + \sum_{n \in H_2} \cdots \\\notag
&\lesssim R 2^{j/2} \left[\exp(-c |m_2|^{1/s}) + (C/|m_2|)^{|m_2|/4} \right] \\
&\lesssim R 2^{j/2} \exp(-c|m_2|^{1/s}).
\end{aligned}
\]

We then prove a separate inequality, using Lemma \ref{lem:radial-bounds2}, $|\Psi^{\mathrm{rad}}_{j,m_1,n}(\rho)|\lesssim_s R 2^{j/2} \exp(-c |m_1|^{1/s})$ and the estimate on the angular factor in \eqref{eqn:angular_decay}, 
\[
|\widehat{\psi_{j,k,\mathbf{m}}}(\xi)| \lesssim_s R 2^{j/2} \exp(-c |m_1|^{1/s}) \sum_{n \in \Z} R^{-1} \exp(-| m_2/2 + n/R|^{1/s}) \lesssim R 2^{j/2} \exp(-c|m_1|^{1/s}).
\]
Taking the geometric mean of the previous two inequalities, we get
\[
|\widehat{\psi_{j,k,\mathbf{m}}}(\xi)| \lesssim_s R 2^{j/2} \exp(-c|m_2|^{1/s}) \exp(-c|m_1|^{1/s}), 
\]
 with a different positive constant $c$. 
\end{proof}
We note the following consequence of Lemma \ref{lem:type2_decay}. Given $\delta \in (0,1/2)$, define
\begin{equation}\label{defn:partition_bdry}
\left\{
\begin{aligned}
&\cI_{1}^{\mathrm{bdry}} = \{(j,k,\mathbf{m}) : j \leq - \log(1/\delta) \mbox{ or } |m_1| > C \log(1/\delta)^s \mbox{ or } |m_2| > C \log(1/\delta)^s \}, \\
&\cI_3^{\mathrm{bdry}} = \{ (j,k,\mathbf{m}) : - \log(1/\delta) < j < 0 \mbox{ and } |m_1| \leq C \log(1/\delta)^{s} \mbox{ and } |m_2| \leq C \log(1/\delta)^{s} \}
\end{aligned}
\right.
\end{equation}
for some constant $C = C_s > 1$, which is picked large enough to ensure 
$$\sum_{|m_1| > C \log(1/\delta)^s} \exp(-c|m_1|^{1/s})^2 \leq \delta.$$
(Here, $c$ is the constant in Lemma \ref{lem:type2_decay}.) Applying 
 Lemma \ref{lem:type2_decay} and summing over $(j,k, \mathbf{m})\in \cI_1^{\mathrm{bdry}}$ yields 
\begin{align}
\sum_{(j,k,\mathbf{m}) \in \cI_1^{\mathrm{bdry}}} & \| \widehat{\psi_{j,k,\mathbf{m}}} \|_{L^2(S)}^2 \lesssim_s \sum_{j \leq - \log(1/\delta)} \sum_{k=1}^{2^{j_{\max}}} \sum_{m_1,m_2=-\infty}^\infty R^2 2^j \exp(-c |m_1|^{1/s})^2 \exp(-c |m_2|^{1/s})^2 \notag \\
& + 2 \sum_{j <0} \sum_{k=1}^{2^{j_{\max}}} \sum_{|m_1| > C \log(1/\delta)^s} \sum_{m_2=-\infty}^\infty R^2 2^j \exp(-c |m_1|^{1/s})^2 \exp(-c |m_2|^{1/s})^2 \notag\\
&\leq C R^3 \delta. \label{eqn:energy_type2}
\end{align}
Here, we have used that $2^{j_{\max}} = R$ and that the series $\sum_{m = - \infty}^\infty \exp(-c |m|^{1/s})^2$ is convergent. By definition of $\cI_3^{\mathrm{bdry}}$, the index 
 $j$ ranges over at most $O(\log(1/\delta))$ integers, while $m_1$ and $m_2$ range over sets of size 
$O(\log(1/\delta)^{s})$ each. For each $(j, m_1, m_2)$, the angular index $k$ takes $O(R)$ values. Hence, 
\begin{equation}\label{eqn:res_bound2}
\#( \cI_3^{\mathrm{bdry}}) \leq C R \log(1/\delta)^{1 + 2s}.
\end{equation}
Observe that 
\[
\cI^{\mathrm{bdry}} = \{ (j,k,\mathbf{m}) : j < 0, 1 \leq k \leq 2^{j_{\max}}, \mathbf{m} \in \Z^2\} = \cI_1^{\mathrm{bdry}} \cup \cI_3^{\mathrm{bdry}}.
\]
Therefore, the boundary wave packets can be categorized efficiently.

\subsection{Proof of Proposition~\ref{prop1}}

Let $ \varepsilon \in (0,1/2)$ and $R = 2^{j_{\max}} \geq 2$, $j_{\max}\in \Bbb N$, be given. 
Fix $\delta \in (0,1/2)$ to be determined below. Define the partition sets in \eqref{defn:partition_int} and \eqref{defn:partition_bdry} for this $\delta$, where $A = A_s$ in \eqref{defn:partition_int} is picked to be a large enough constant determined by $s$ as per Lemma \ref{lem:energy_type1}. Let $\cI_1 = \cI_1^{\mathrm{int}} \cup \cI_1^{\mathrm{bdry}}$, $\cI_2 = \cI_2^{\mathrm{int}}$ and $\cI_3 = \cI_3^{\mathrm{int}} \cup \cI_3^{\mathrm{bdry}}$, so that $\cI_1 \cup \cI_2 \cup \cI_3 = \cI$. 
According to \eqref{eqn:res_bound1} and \eqref{eqn:res_bound2}, we have 
 \[
 \begin{aligned}
 \#(\cI_3)
 = \#(\cI^{\mathrm{int}}_3)+\#(\cI^{\mathrm{bdry}}_3)
 &\leq
 C_{s,S} R \max\{  \log R \log(1/\delta)^{s}, \log(1/\delta)^{2s} \} + C R\log(1/\delta)^{1+2s} \\
 &\leq C_{s,S}' R \max\{  \log R \log(1/\delta)^{s} , \log(1/\delta)^{1+2s} \}.
 \end{aligned}
 \]
According to Lemma \ref{lem:energy_type1} and \eqref{eqn:energy_type2}, for some $C>1$ determined by $s$,
\[
\sum_{\cI_1} \| \widehat{\psi_\nu} \|_{L^2(S)}^2 + \sum_{\cI_2} \| \widehat{\psi_\nu} \|_{L^2(\R^2 \setminus S)}^2 \leq C R^3 \delta.
\]
We now set $\delta = \varepsilon^2/(C R^3)$ so that 
\[
\sum_{\cI_1} \| \widehat{\psi_\nu} \|_{L^2(S)}^2 + \sum_{\cI_2} \| \widehat{\psi_\nu} \|_{L^2(\R^2 \setminus S)}^2 \leq \varepsilon^2.
\]
Then $\log(1/\delta) \leq C\log(R/ \varepsilon)$ and $\log R \leq \log(R/\varepsilon)$. Thus, $\#(\cI_3) \lesssim_{s,S} R \log(R/ \varepsilon)^{1+2s}$. This completes the proof of Proposition \ref{prop1}.

\section{Proof of Theorem \ref{thm:newmain} and Theorem \ref{thm2}} 
\label{sec:mainresult}

Theorem \ref{thm2} is immediate from Proposition \ref{prop2} and Proposition \ref{prop1}.

To prove Theorem \ref{thm:newmain}, we use the following lemma from \cite{marceca2023} which is an extension of a lemma from \cite{israel15eigenvalue}:

\begin{lemma}\label{lem:Romero_lem}
Let $T : \cH \rightarrow \cH$ be a positive, compact, self-adjoint operator on a Hilbert space $\cH$ with $\| T \| \leq 1$ and eigendecomposition $T = \sum_{n \geq 0} \lambda_n \langle \cdot, f_n \rangle f_n$. 
Let $\{\phi_i \}_{i \in \cI}$ be a unit--norm frame for $\cH$ with lower frame constant $A$. If $\cI = \cI_1 \cup \cI_2 \cup \cI_3$, and
\[
\sum_{i \in \cI_1} \| T \phi_i \|^2 + \sum_{i \in \cI_2} \| (I - T) \phi_i \|^2 \leq \frac{A}{2} \varepsilon^2
\]
then $\# \{ \lambda_n(T) \in (\varepsilon,1- \varepsilon) \} \leq \frac{2}{A} \# \cI_3$.
\end{lemma}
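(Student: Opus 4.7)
The plan is to count the dimension of the plunge subspace by pairing the frame lower bound with a spectral lower bound on $T$ and $I-T$. Let $V := \mathrm{span}\{f_n : \lambda_n \in (\varepsilon,1-\varepsilon)\}$, let $M := \dim V$, and write $P_V$ for the orthogonal projection onto $V$. The first step is to fix an orthonormal basis $\{e_k\}_{k=1}^M$ of $V$, apply the frame lower bound $A\|e_k\|^2 \le \sum_i |\langle e_k,\phi_i\rangle|^2$ to each $e_k$, and exchange the order of summation. This yields the key pivot
\[
AM \;=\; \sum_{k=1}^M A\|e_k\|^2 \;\le\; \sum_{i\in\cI}\sum_{k=1}^M |\langle e_k,\phi_i\rangle|^2 \;=\; \sum_{i\in\cI}\|P_V\phi_i\|^2,
\]
which converts a dimension count into a trace-like sum that is amenable to termwise estimation.

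Next I would record two spectral lower bounds that match this sum to the energy hypothesis. Because $V$ is spanned by eigenvectors of $T$ with eigenvalues in $(\varepsilon,1-\varepsilon)$, the subspace $V$ is invariant under both $T$ and $I-T$, and the spectral theorem applied to the orthogonal decomposition $\phi = P_V\phi + P_{V^\perp}\phi$ gives, for every $\phi\in\cH$,
\[
\|T\phi\|^2 \;\ge\; \varepsilon^2\|P_V\phi\|^2, \qquad \|(I-T)\phi\|^2 \;\ge\; \varepsilon^2\|P_V\phi\|^2.
\]
These inequalities are the bridge between the abstract projection quantity $\|P_V\phi_i\|^2$ and the energy quantities supplied in the hypothesis of the lemma.

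The third step is to split $\cI = \cI_1\cup\cI_2\cup\cI_3$ in the pivot sum. On $\cI_1$ I use $\|P_V\phi_i\|^2 \le \varepsilon^{-2}\|T\phi_i\|^2$; on $\cI_2$ I use $\|P_V\phi_i\|^2 \le \varepsilon^{-2}\|(I-T)\phi_i\|^2$; and on $\cI_3$ I use the trivial bound $\|P_V\phi_i\|^2 \le \|\phi_i\|^2 = 1$ supplied by the unit-norm assumption. Feeding these into the pivot and invoking the standing energy estimate gives
\[
AM \;\le\; \varepsilon^{-2}\!\left(\sum_{i\in\cI_1}\|T\phi_i\|^2 + \sum_{i\in\cI_2}\|(I-T)\phi_i\|^2\right) + \#\cI_3 \;\le\; \tfrac{A}{2} + \#\cI_3,
\]
hence $M \le \tfrac12 + \#\cI_3/A$. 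Since $M$ is a nonnegative integer, this yields $M \le (2/A)\#\cI_3$: in the regime $\#\cI_3 \ge A/2$ the bound follows by comparing $\tfrac12 \le \#\cI_3/A$, and in the complementary regime the right-hand side is $<1$ so $M = 0$ and the conclusion is trivial.

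The main obstacle is really conceptual rather than technical and lies in the very first step: recognizing that summing the frame lower bound against an orthonormal basis of $V$ produces the trace-like quantity $\sum_i \|P_V\phi_i\|^2$, which is exactly the right object to pair with the energy hypothesis via the spectral lower bounds on $T$ and $I-T$. Once this pivot is identified, the three-way partition argument and the spectral bounds are routine, and no further analytic input is needed.
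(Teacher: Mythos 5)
Your proof is correct. Note that the paper does not give its own argument for this lemma---it is cited verbatim from \cite{marceca2023}, which in turn extends a lemma of \cite{israel15eigenvalue}---and the argument you give is the standard one used in those references: sum the lower frame inequality over an orthonormal basis of the (finite-dimensional, since $T$ is compact) plunge eigenspace $V$ to obtain $AM\le\sum_{i\in\cI}\|P_V\phi_i\|^2$, convert $\|P_V\phi_i\|^2$ into energy quantities via the spectral lower bounds $\|T\phi\|\ge\varepsilon\|P_V\phi\|$ and $\|(I-T)\phi\|\ge\varepsilon\|P_V\phi\|$ (which rely on $V$ and $V^\perp$ both being invariant under $T$ and $I-T$), split the index set using that $\cI=\cI_1\cup\cI_2\cup\cI_3$, and finish using the integrality of $M$. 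The only place you might tighten the wording is the concluding case split: the chain of implications is that $\#\cI_3<A/2$ together with $M\le\tfrac12+\#\cI_3/A$ forces $M<1$, hence $M=0$ by integrality, after which $M\le(2/A)\#\cI_3$ is immediate; as written, the phrase ``the right-hand side is $<1$ so $M=0$'' compresses two steps into one.
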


We consider the unit--norm frame $\{ \psi_{\nu}\}_{\nu \in \cI}$ for $L^2(D(R))$ with frame constants $0 < A < B < \infty$, satisfying the conditions of Theorem \ref{thm2}.

Let $\varepsilon_0:=\sqrt{A/2} \varepsilon$. 
Using the partition $\cI = \cI_1 \cup \cI_2 \cup \cI_3$ from Theorem \ref{thm2} 
(with $\varepsilon_0$ in place of $\varepsilon$), and using that $P_{D(R)}$ is an orthogonal
projection (hence a contraction) and that $P_{D(R)}\psi_\nu=\psi_\nu$, we have
\[
\begin{aligned}
\sum_{\nu\in I_1}\|T_R\psi_\nu\|_2^2 + \sum_{\nu\in I_2}\|(I-T_R)\psi_\nu\|_2^2
&\le \sum_{\nu\in I_1}\|B_{S}\psi_\nu\|_2^2
 + \sum_{\nu\in I_2}\|(I-B_{S})\psi_\nu\|_2^2 \\
&= \sum_{\nu\in I_1}\|\widehat{\psi_\nu}\|_{L^2(S)}^2
 + \sum_{\nu\in I_2}\|\widehat{\psi_\nu}\|_{L^2(\mathbb{R}^2\setminus S)}^2 \\
&\le \varepsilon_0^2
= (A/2)\varepsilon^2.
\end{aligned}
\]
Therefore, by Lemma \ref{lem:Romero_lem}, we deduce that
\[
\# \{ n : \lambda_n(T_R) \in ( \varepsilon,1- \varepsilon)\} \leq \frac{2}{A} \#(\cI_3) \lesssim_{s,S} R \log(R/ \varepsilon)^{1+2s}.
\]

\begin{appendices}

\section{Properties of Gevrey functions}\label{appendix:Gevrey}

We require the following technical lemma.

\begin{lemma}
\label{lem:tech}
Given $s \geq 1$,
\[
\inf_{\ell \in \Z_{\geq 0}} (\ell !)^s X^{- \ell} \leq C \exp(-c X^{1/s}) \quad \forall X > 0,
\]
where $C=\exp(1/2)$ and $c=\frac{1}{2e}$.
\end{lemma}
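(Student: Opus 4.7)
The plan is to split on the size of $X$ relative to the threshold $X=e^{s}$, which is the smallest value at which the continuous optimizer $\ell_\ast = X^{1/s}/e$ of $\ell^{s\ell}X^{-\ell}$ meets the integer constraint $\ell_\ast \geq 1$. The strategy is guided by the crude bound $\ell!\leq \ell^{\ell}$, which reduces the problem to bounding $\ell^{s\ell} X^{-\ell}$. Because the claimed exponential rate $c=1/(2e)$ is strictly weaker than the sharp rate $1/e$ obtainable at the continuous optimum (noting $s\geq 1$), there is room to absorb both the integer rounding and the $s \geq 1$ slack into the universal prefactor $C=e^{1/2}$.

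In the small-$X$ regime $X\leq e^{s}$, I would simply take $\ell=0$, so the infimum is at most $1$; the target inequality $1\leq e^{1/2}\exp(-X^{1/s}/(2e))$ reduces to $X^{1/s}/(2e)\leq 1/2$, i.e.\ $X\leq e^{s}$, matching the regime exactly. In the large-$X$ regime $X> e^{s}$, I would take $\ell=\lfloor X^{1/s}/e\rfloor\geq 1$. Using $\ell!\leq \ell^{\ell}$ together with $\ell^{s}\leq X/e^{s}$ (which follows from $\ell\leq X^{1/s}/e$), one obtains
\[
(\ell!)^{s}X^{-\ell}
\;\leq\;\ell^{s\ell}X^{-\ell}
\;\leq\;\ell^{s\ell}\cdot e^{-s\ell}\ell^{-s\ell}
\;=\;e^{-s\ell}
\;\leq\;e^{-\ell},
\]
where the last inequality uses $s\geq 1$. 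The rounding estimate $\ell\geq X^{1/s}/e-1$ then gives $e^{-\ell}\leq e\exp(-X^{1/s}/e)$, and the final comparison $e\exp(-X^{1/s}/e)\leq e^{1/2}\exp(-X^{1/s}/(2e))$ reduces algebraically to $X^{1/s}\geq e$, which is exactly the regime hypothesis $X\geq e^{s}$.

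I do not anticipate any substantial obstacle; the argument is a clean one-variable optimization with the integer constraint. The only delicate point is constant bookkeeping: the multiplicative loss of $e$ from $\ell\geq X^{1/s}/e-1$ and the halving of the exponential rate from $1/e$ to $1/(2e)$ must exactly match at the threshold $X=e^{s}$, so that the two regime bounds glue continuously. The choice of threshold is dictated by this matching, and the choice of the weaker rate $c=1/(2e)$ (rather than the sharp $s/e$) is precisely what makes the absorption of the rounding loss into the prefactor $e^{1/2}$ possible.
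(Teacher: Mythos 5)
Your proof is correct and follows essentially the same route as the paper: split at $X = e^{s}$, take $\ell = \lfloor X^{1/s}/e\rfloor$ in the large-$X$ regime, and use $\ell! \leq \ell^{\ell}$ together with $\ell^{s} \leq X/e^{s}$ to reach $e^{-\ell}$. The only minor difference is in the rounding step: you use $\lfloor y\rfloor \geq y-1$ and absorb the resulting factor $e$ by halving the exponential rate, whereas the paper uses $\lfloor y\rfloor \geq y/2$ (valid for $y\geq 1$) to reach $\exp(-X^{1/s}/(2e))$ directly without a prefactor in that regime.
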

\begin{proof}
Suppose first $X \geq e^s $. Set $\ell = \biggl\lfloor \frac{X^{1/s}}{e} \biggr\rfloor \geq 1$, and observe $\ell^s \leq \frac{X}{e^s}$. Hence,
\[
\ln(\ell^s) - \ln(X) \leq \ln\left(\frac{X}{e^s}\right) - \ln(X) = -\ln(e^s) \leq - 1.
\]
Observe that $\ell!\leq \ell^\ell$, so
\[
(\ell!)^s X^{-\ell} \leq \ell^{\ell s} X^{-\ell} = \exp( \ell (\ln(\ell^s) - \ln(X) )) \leq \exp(- \ell ) \leq \exp( - c X^{1/s}),
\]
where $c=\frac{1}{2e}$. For the last inequality, note that $\ell=\biggl\lfloor \frac{X^{1/s}}{e } \biggr\rfloor \geq \frac{X^{1/s}}{2e}$. (Indeed, $\lfloor y\rfloor\geq y/2$ for $y \geq 1$.)

Now suppose $X < e^s $. If $\ell = 0$ then $(\ell!)^s X^{-\ell} = 1$. So it suffices to show that 
\[
1 \leq C \exp(-c X^{1/s}) \quad \forall X\in (0,e^s).
\]
That is,
$C\geq \exp(cX^{1/s})$ for $X \in (0, e^s )$. Taking $C= \exp( ec ) = \exp(1/2)$ gives the desired inequality, 
completing the proof of the lemma.
\end{proof}

\begin{lemma}[Fourier decay of Gevrey functions]\label{lemma:Gevrey:Fourier_decay}
Let \(s \geq 1\). 
Fix $\phi \in C^\infty(\R^d)$ to be Gevrey--$s$ with constants $(A,B)$ and with support contained in a bounded region of Lebesgue measure at most $H$. Then its Fourier transform satisfies the estimate
\begin{equation}\label{bd:Gevrey:FT}
|\widehat{\phi}(\xi)| \leq C A H \exp(- c (| \xi|/B)^{1/s}) \qquad (\xi \in \R^d)
\end{equation}
where $C = \exp(d/2)$ and $c=\frac{1}{2e}$.
\end{lemma}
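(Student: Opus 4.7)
The strategy is a standard integration--by--parts argument, combined with multi--index optimization and the elementary bound from Lemma~\ref{lem:tech}. Fix $\xi\in\mathbb{R}^d$. For any multi--index $\alpha=(\ell_1,\dots,\ell_d)\in\mathbb{Z}_{\geq 0}^d$, integration by parts gives the identity $(i\xi)^\alpha\widehat{\phi}(\xi)=\widehat{\partial^\alpha\phi}(\xi)$. Since $\phi$ is supported in a set of measure at most $H$, the trivial bound on the Fourier transform yields
\[
\prod_{j=1}^d |\xi_j|^{\ell_j}\,|\widehat{\phi}(\xi)|
\leq
C_d\, H\,\|\partial^\alpha\phi\|_{L^\infty},
\]
where $C_d$ is the normalization constant in \eqref{def:FT}; note $C_d\leq 1$ for $d\geq 1$. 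The Gevrey--$s$ hypothesis then gives
\[
|\widehat{\phi}(\xi)|\leq H A\,\prod_{j=1}^d (\ell_j!)^s\,\bigl(B/|\xi_j|\bigr)^{\ell_j}.
\]

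Since the right--hand side separates as a product over coordinates, I can minimize each factor independently over $\ell_j\in\mathbb{Z}_{\geq 0}$. Applying Lemma~\ref{lem:tech} to each factor with $X=|\xi_j|/B$ (and noting the bound is trivial when $|\xi_j|=0$) yields
\[
|\widehat{\phi}(\xi)|\leq H A\,\prod_{j=1}^d \exp(1/2)\exp\!\bigl(-c(|\xi_j|/B)^{1/s}\bigr)
= H A\,\exp(d/2)\exp\!\Bigl(-c\sum_{j=1}^d (|\xi_j|/B)^{1/s}\Bigr),
\]
with $c=1/(2e)$.

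To close the argument, I need the elementary inequality
\[
\sum_{j=1}^d (|\xi_j|/B)^{1/s} \;\geq\; (|\xi|/B)^{1/s}\qquad\text{for all }s\geq 1,
\]
which is exactly the monotonicity of $\ell^p$ norms: since $1/s\leq 1\leq 2$, one has $\bigl(\sum_j|\xi_j|^{1/s}\bigr)^s\geq \bigl(\sum_j|\xi_j|^2\bigr)^{1/2}=|\xi|$. Combining this with the previous display produces the advertised bound
\[
|\widehat{\phi}(\xi)|\leq C A H\exp\!\bigl(-c(|\xi|/B)^{1/s}\bigr),\qquad C=\exp(d/2),\ c=\tfrac{1}{2e}.
\]

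\textbf{Main obstacle.} There is no deep obstruction; the only delicate point is preserving the claimed constant $C=\exp(d/2)$. A naive approach that integrates by parts only in the direction of the largest coordinate of $\xi$ gives an extra factor of $d^{1/(2s)}$ inside the exponent, which would weaken the bound. The fix, as described above, is to distribute the derivatives across all $d$ coordinates via the tensor--product multi--index $\alpha=(\ell_1,\dots,\ell_d)$, optimize each $\ell_j$ separately, and then exploit the $\ell^p$--monotonicity inequality to recover the clean exponent $(|\xi|/B)^{1/s}$.
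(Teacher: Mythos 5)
Your proof is correct and follows essentially the same approach as the paper's: integrate by parts with a general multi-index $\alpha$, apply the Gevrey bound, optimize each $\ell_j$ coordinate-by-coordinate via Lemma~\ref{lem:tech} with $X=|\xi_j|/B$, and pass from the coordinate sum to $|\xi|^{1/s}$ using monotonicity of $\ell^p$ quasi-norms. The only cosmetic difference is how the case $\xi_j=0$ is handled (you note it inline by taking $\ell_j=0$; the paper separately records the trivial bound at $\xi=0$), but the substance is identical.
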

\begin{proof}
We estimate the Fourier transform of \(\phi\),
\[
\widehat{\phi}(\xi) = (2 \pi)^{-d/2} \int_{\R^d} \phi(x) e^{ - i \langle \xi , x \rangle} \, d x.
\]
Fix a multiindex $\alpha=(\alpha_1,\alpha_2, \dots, \alpha_d) \in \Z_{\geq 0}^d$ to be determined below. For $\xi \neq 0$,
$$ e^{-i\langle \xi, x \rangle} = (-i\xi)^{-\alpha} \partial^\alpha_x e^{-i\langle \xi, x \rangle},
$$
where $(-i \xi)^{-\alpha} = (-i \xi_1)^{-\alpha_1}\cdots (-i\xi_d)^{-\alpha_d}$. Thus,
\[
\widehat{\phi}(\xi)
= (2\pi)^{-d/2} ( -i\xi)^{-\alpha} \int_{\mathbb{R}^d} \phi(x)  \partial^\alpha_x e^{- i\langle \xi, x \rangle} \, dx.
\]
Now, we integrate by parts to obtain
\[
\widehat{\phi}(\xi) = (2 \pi)^{-d/2}
 (-1)^{|\alpha|} ( - i\xi)^{-\alpha} \int_{\R^d} \partial^\alpha_x \phi(x) e^{-i \langle \xi , x \rangle} \, d x.
\]
Note that $|(-i \xi)^{-\alpha}| = |\xi_1|^{-\alpha_1} \cdots |\xi_d|^{-\alpha_d}$. Therefore, given that $\phi$ is Gevrey--$s$ with constants $(A,B)$, and is supported on a set of measure $\leq H$,
\[
|\widehat{\phi}(\xi)| \leq H A B^{|\alpha|} (\alpha !)^s \prod_{j=1}^d |\xi_j|^{-\alpha_j} = A H \prod_{j=1}^d \big( B^{\alpha_j} (\alpha_j !)^s |\xi_j|^{-\alpha_j} \big). \]
We now choose $\alpha \in \Z_{\geq 0}^d$ to minimize the term on the right-hand side using Lemma \ref{lem:tech}. (We apply this lemma with $X = |\xi_j|/B$ for each fixed $j$.) We determine that, for $C=\exp(d/2)$ and $c=\frac{1}{2e}$,
\[
|\widehat{\phi}(\xi)| \leq C A H \prod_{j=1}^d \exp( - c (|\xi_j|/B)^{1/s}) \leq C A H \exp( - c(|\xi|/B)^{1/s}).
\]
Here, the last inequality holds since $s\geq 1$ and the function $t
\to t^{1/s}$ is subadditive, together with the fact that the $\ell^1_d$ norm of $\xi$ is greater than or equal to the Euclidean norm of $\xi$. 

The preceding bound holds provided $\xi \neq 0$. If $\xi = 0$ then
\[
|\widehat{\phi}(\xi)| \leq \| \phi \|_{L^1} \leq \| \phi \|_{L^\infty} | \mathrm{supp}(\phi)| \leq A H.
\]
\end{proof}

There is an analogue of the previous lemma for periodic Gevrey functions on the circle. The proof is almost identical to that of the previous lemma and we leave its verification to the reader.

\begin{lemma}[Fourier decay of periodic Gevrey functions]\label{lemma:Gevrey:Fourier_decay_periodic}
 Suppose $\eta$ is a $C^\infty$ function defined on the unit circle $S^1$. Suppose $\eta$ obeys the Gevrey type bounds, $\| \partial_\theta^k \eta \|_{L^\infty} \leq A B^k (k!)^s$ for all $k \geq 0$, for some $s \geq 1$, and suppose $\eta$ is supported on a region in $S^1$ of measure at most $H$. (Here, necessarily $H \leq 2 \pi$.) Then the Fourier coefficients of $\eta$ satisfy the estimates
 \[
 |\widehat{\eta}(n)| \leq C A H \exp(- c (|n|/B)^{1/s}) \qquad (n \in \Z),
 \]
 where $C,c > 0$ are absolute constants (independent of $A,B,s$).
\end{lemma}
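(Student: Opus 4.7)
My plan is to mirror the proof of Lemma~\ref{lemma:Gevrey:Fourier_decay} almost verbatim, replacing integration by parts on $\mathbb{R}^d$ with integration by parts on the circle, where boundary terms vanish automatically by $2\pi$-periodicity rather than by compact support.

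First I would fix $n \in \mathbb{Z}$ with $n \neq 0$ and write the Fourier coefficient as
\[
\widehat{\eta}(n) = \frac{1}{\sqrt{2\pi}}\int_{0}^{2\pi} \eta(\theta)\, e^{-in\theta}\, d\theta
\]
(or whichever normalization is in force). Using the identity $e^{-in\theta} = \frac{1}{(-in)^k}\partial_\theta^k e^{-in\theta}$ and integrating by parts $k$ times, the periodicity of $\eta$ kills every boundary contribution, and I obtain
\[
\widehat{\eta}(n) = \frac{(-1)^k}{\sqrt{2\pi}\,(-in)^k}\int_0^{2\pi} \partial_\theta^k \eta(\theta)\, e^{-in\theta}\, d\theta.
\]
Taking absolute values and using that $\eta$ (hence $\partial_\theta^k\eta$) is supported on a set of measure at most $H$, together with the Gevrey bound $\|\partial_\theta^k\eta\|_{L^\infty} \le AB^k(k!)^s$, gives
\[
|\widehat{\eta}(n)| \le \frac{AH}{\sqrt{2\pi}}\cdot \frac{B^k(k!)^s}{|n|^k}.
\]

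Next, since this holds for every $k \in \mathbb{Z}_{\ge 0}$, I take the infimum on the right and invoke Lemma~\ref{lem:tech} with $X=|n|/B$ to obtain
\[
\inf_{k\ge 0} (k!)^s (|n|/B)^{-k} \le C\exp\!\bigl(-c(|n|/B)^{1/s}\bigr),
\]
with absolute constants $C,c>0$. This yields the desired estimate for $n \neq 0$. The $n=0$ case is immediate: $|\widehat{\eta}(0)| \le \|\eta\|_{L^1} \le AH$, which is absorbed into the bound at the cost of enlarging $C$ (since the exponential factor is $\le 1$).

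There is essentially no obstacle: the only subtle point compared with Lemma~\ref{lemma:Gevrey:Fourier_decay} is the replacement of $d$-dimensional partial integration (with decay forced by compact support) by one-dimensional integration by parts on $S^1$ (with boundary terms killed by periodicity). Everything else—in particular the appeal to Lemma~\ref{lem:tech} and the resulting constants—carries over unchanged. Since this is the one-dimensional, periodic case, there is no subadditivity step needed either, which is why I expect the statement to be even cleaner than its non-periodic counterpart.
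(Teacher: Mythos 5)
Your proof is correct and follows exactly the route the paper intends: the paper does not write out a proof of this lemma, remarking only that it is ``almost identical'' to Lemma~\ref{lemma:Gevrey:Fourier_decay} and leaving verification to the reader, and your argument is precisely that adaptation --- integrate by parts $k$ times (boundary terms vanish by periodicity instead of by compact support), apply the Gevrey bound and the support-measure bound, then optimize over $k$ via Lemma~\ref{lem:tech}, handling $n=0$ trivially. Your observation that the subadditivity step from the $d$-dimensional proof is not needed here is also accurate.
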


\begin{lemma}\label{lemma:Gevrey:multiplication}
Let \(s_1,s_2 \geq 1\). 
For \(i=1,2\), let \(\phi_i\) be a Gevrey-\(s_i\) functions on $\R$ with constants \((A_i,B_i)\). Then \(\phi_1(x) \cdot \phi_2(x)\) is Gevrey--\(\max\{s_1,s_2\}\) with constants \((A_1A_2,2\max\{B_1,B_2\})\).
\end{lemma}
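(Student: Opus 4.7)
The plan is to differentiate the product with the Leibniz rule and then bound the resulting sum using the two Gevrey hypotheses. Set $s := \max\{s_1, s_2\}$ and $B := \max\{B_1, B_2\}$. For each $k \geq 0$, the Leibniz rule gives
\begin{equation*}
\partial_x^k (\phi_1 \phi_2) = \sum_{j=0}^k \binom{k}{j} (\partial_x^j \phi_1)(\partial_x^{k-j} \phi_2),
\end{equation*}
so by the pointwise estimates $|\partial_x^j \phi_1| \leq A_1 B_1^j (j!)^{s_1}$ and $|\partial_x^{k-j} \phi_2| \leq A_2 B_2^{k-j} ((k-j)!)^{s_2}$, together with the trivial monotonicity $(n!)^{s_i} \leq (n!)^s$ for $n \geq 0$, I would obtain
\begin{equation*}
|\partial_x^k (\phi_1 \phi_2)(x)| \leq A_1 A_2 B^k \sum_{j=0}^k \binom{k}{j} (j!)^s ((k-j)!)^s.
\end{equation*}

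The one remaining step is to bound the combinatorial sum by $2^k (k!)^s$. I would use the elementary inequality $j!(k-j)! \leq k!$ (which is just $\binom{k}{j} \geq 1$) combined with $s \geq 1$ to see that $(j!)^s ((k-j)!)^s = (j!(k-j)!)^s \leq (k!)^s$, and therefore
\begin{equation*}
\sum_{j=0}^k \binom{k}{j} (j!)^s ((k-j)!)^s \leq (k!)^s \sum_{j=0}^k \binom{k}{j} = 2^k (k!)^s.
\end{equation*}
Substituting back yields $|\partial_x^k (\phi_1 \phi_2)(x)| \leq A_1 A_2 (2B)^k (k!)^s$, which is exactly the claimed Gevrey--$s$ bound with constants $(A_1 A_2,\, 2\max\{B_1,B_2\})$.

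No step presents a genuine obstacle here: the proof is essentially bookkeeping. The mismatch between the two Gevrey indices $s_1, s_2$ is absorbed at the outset by the monotonicity $(n!)^{s_i} \leq (n!)^s$, and the factor of $2$ in the $B$-parameter arises from the binomial identity $\sum_j \binom{k}{j} = 2^k$.
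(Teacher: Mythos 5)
Your proof is correct and follows the same approach as the paper: apply the Leibniz rule, absorb the mismatch between $s_1, s_2$ via $(n!)^{s_i}\le (n!)^{\max\{s_1,s_2\}}$, use $j!\,(k-j)!\le k!$ to pull $(k!)^s$ out of the sum, and then evaluate $\sum_j\binom{k}{j}=2^k$ to obtain the factor $2$ in the $B$-constant. The only minor difference is that you make the monotonicity step $(n!)^{s_i}\le (n!)^s$ explicit, whereas the paper performs it silently.
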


\begin{proof}
Let \(\phi(x) = \phi_1(x)\phi_2(x)\). The proof is an application of the Leibniz rule
\begin{equation}\label{eq:Leibniz}
\partial^k{\phi} = \sum_{j=0}^{k} \binom{k}{\j} \partial^j{\phi_1} \cdot \partial^{k-j}\phi_2
.\end{equation}
By \eqref{eq:Leibniz} and the triangle inequality, 
\begin{align*}
\|\partial^k{\phi}\|_{\infty}
&\leq 
\sum_{j=0}^k \binom{k}{j} \|\partial^j{\phi_1} \cdot \partial^{k-j}{\phi_2}\|_{\infty}
\\&\leq 
\sum_{j=0}^k \binom{k}{j} A_1(B_1)^j (j!)^{s_1} A_2(B_2)^{k-j} ([k-j]!)^{s_2}.
\end{align*}
Let \(B=\max\{B_1,B_2\}\) and \(s=\max\{s_1,s_2\}\). Since \(j! \cdot [k-j]! \leq k!\) for all \(0 \leq j \leq k\), we continue the estimation to find that 
\begin{align*}
\|\partial^k{\phi}\|_{\infty}
&\leq 
A_1A_2 \sum_{j=0}^k \binom{k}{j} B^j (j!)^{s} B^{k-j} ([k-j]!)^{s} 
\\&\leq 
A_1A_2 2^k \cdot (k!)^s B^k
.\end{align*}
This completes the proof. 
\end{proof}

\section{Construction of cutoffs}\label{cutoffs}

\begin{lemma}[Construction of radial cutoffs]\label{lem:radial-cutoffs}
Fix $s>1$ and $R=2^{j_{\max}}$,$j_{\max}\in \Bbb N$. There exist functions
$\{\phi_j\}_{j\le j_{\max}}\subset C^\infty([0,R])$ and absolute constants
$C_1,C_2>0$ (depending only on $s$) such that
\begin{enumerate}
\item[(R1)] $\sum_{j\le j_{\max}} \phi_j(r)^2=1$ for all $r\in[0,R)$,
\item[(R2)] $\supp(\phi_j)\subset I_j:=\big[R-1.1 2^j, R-0.9 2^{j-1}\big]\cap[0,R]$,
\item[(R3)] $\|\partial_r^k\phi_j\|_{L^\infty([0,R])}\le C_1 C_2^k (k!)^s 2^{-jk}$ for all $k\ge 0$,
\end{enumerate}
and the family $\{\phi_j\}$ is locally finite on $[0,R)$.
\end{lemma}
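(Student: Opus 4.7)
The plan is to obtain the family $\{\phi_j\}_{j<j_{\max}}$ as dyadic rescalings of a single master Gevrey-$s$ bump $\rho:\R\to[0,1]$, and to define $\phi_{j_{\max}}$ separately as a modified bump that closes up the partition of unity on $[0,R)$. The key point is to prescribe the square root $\rho$ directly, rather than $\rho^2$, so that Gevrey regularity is automatic by composition and we never have to extract a Gevrey square root of a function that vanishes to second order.

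By the classical Denjoy--Carleman / Roumieu construction (valid precisely for $s>1$; see Appendix~\ref{appendix:Gevrey} and the references therein), fix a non-decreasing Gevrey-$s$ function $\eta\in C^\infty(\R)$ with $\eta\equiv 0$ on $(-\infty,0]$, $\eta\equiv 1$ on $[1,\infty)$, and $\|\eta^{(k)}\|_\infty\le D_1 D_2^k(k!)^s$ with constants depending only on $s$. Define $\rho:\R\to[0,1]$ by setting $\rho\equiv 0$ outside $[0.45,1.1]$, $\rho\equiv 1$ on $[0.55,0.9]$, and
\[
\rho(v)=\sin\!\Bigl(\tfrac{\pi}{2}\eta\!\bigl(\tfrac{v-0.45}{0.1}\bigr)\Bigr)\ \text{on}\ [0.45,0.55],\qquad
\rho(v)=\cos\!\Bigl(\tfrac{\pi}{2}\eta\!\bigl(\tfrac{v-0.9}{0.2}\bigr)\Bigr)\ \text{on}\ [0.9,1.1].
\]
Because all derivatives of $\eta$ vanish at $0$ and at $1$, the pieces glue into a $C^\infty$ function; because $\sin,\cos$ are analytic and $\eta$ is Gevrey-$s$, $\rho$ is Gevrey-$s$ with constants depending only on $s$. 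The crucial algebraic identity $\rho(v)^2+\rho(2v)^2=1$ for $v\in[0.45,0.55]$ reduces to $\sin^2+\cos^2=1$ after noting that $\tfrac{2v-0.9}{0.2}=\tfrac{v-0.45}{0.1}$. A short case analysis based on this identity together with the support of $\rho$ then yields the multiplicative partition of unity
\[
\sum_{k\in\Z}\rho(u/2^k)^2=1\qquad(u>0).
\]

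For $j<j_{\max}$, set $\phi_j(r):=\rho\bigl((R-r)/2^j\bigr)$; then $\supp\phi_j\subset I_j$, proving (R2), and the chain rule combined with the Gevrey bounds on $\rho$ yields $\|\partial_r^k\phi_j\|_\infty\le C_1 C_2^k(k!)^s 2^{-jk}$. For $j=j_{\max}$ define
\[
\phi_{j_{\max}}(r):=\begin{cases}1,& r\in[0,0.45R],\\ \sin\!\Bigl(\tfrac{\pi}{2}\eta\!\bigl(\tfrac{0.55R-r}{0.1R}\bigr)\Bigr),& r\in[0.45R,0.55R],\\ 0,& r\in[0.55R,R].\end{cases}
\]
Then $\supp\phi_{j_{\max}}\subset I_{j_{\max}}=[0,0.55R]$, and the change of variables $\tfrac{2(R-r)/R-0.9}{0.2}=\tfrac{0.55R-r}{0.1R}$ shows $\phi_{j_{\max}}(r)^2+\phi_{j_{\max}-1}(r)^2=1$ on the overlap $[0.45R,0.55R]$. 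Combined with the multiplicative partition of unity above, this gives (R1) on $[0,R)$. The chain rule absorbs the factor $(10/R)^k=10^k\cdot 2^{-j_{\max}k}$ into $C_2$, so (R3) holds uniformly in $j$. Local finiteness is immediate: the support condition forces $2^j\in\bigl[(R-r)/1.1,(R-r)/0.45\bigr]$, an interval of ratio below $3$ containing at most two integer powers of $2$.

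The main obstacle being navigated is that the naive telescoping construction $\phi_j^2=\Phi_j-\Phi_{j-1}$ (with $\Phi_j$ a dyadic rescaling of a Gevrey-$s$ monotone cutoff) only yields Gevrey bounds on $\phi_j^2$; extracting a Gevrey square root of a nonnegative Gevrey function vanishing to second order would require a Gevrey analog of the Glaeser--Bony theorem, whose constants are not obviously uniform across dyadic scales. The sine--cosine device presents $\rho$ itself as the composition of a Gevrey-$s$ function with the analytic map $\sin(\tfrac{\pi}{2}\,\cdot\,)$, so Gevrey regularity with scale-uniform constants is automatic.
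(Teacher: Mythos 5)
Your argument is correct, but it takes a genuinely different route from the paper's. The paper starts from a single Gevrey-$s$ mother bump $u$ (supported in $[0.45,1.1]$, equal to $1$ on $[1/2,1]$), rescales dyadically to get pre-cutoffs $\widetilde\phi_j(r)=u((R-r)/2^j)$, observes that $W(r)=\sum_{j\le j_{\max}}\widetilde\phi_j(r)^2$ is pinched between two absolute constants $c\le W\le C$ on $[0,R)$, and then normalizes $\phi_j:=\widetilde\phi_j/\sqrt{W}$; Gevrey-$s$ regularity of $\phi_j$ with the required $2^{-jk}$ scaling then follows from the Gevrey multiplication lemma together with stability of Gevrey-$s$ under post-composition with the real-analytic map $x\mapsto x^{-1/2}$ on $[c,C]$. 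You instead arrange for the mother bump $\rho$ to satisfy the \emph{exact} dyadic square partition $\sum_{k\in\Z}\rho(u/2^k)^2=1$ via the $\sin^2+\cos^2$ device, with the consequence that no normalization is needed at all, and then cap the family at $j=j_{\max}$ with a separately defined $\phi_{j_{\max}}$ that is identically $1$ on $[0,0.45R]$ and transitions to $0$ on $[0.45R,0.55R]$ in a way precisely matched to $\phi_{j_{\max}-1}$. Your approach trades the paper's $W^{-1/2}$ step for a more delicate matching of transition intervals (the identity $(2v-0.9)/0.2=(v-0.45)/0.1$ and the scale-$j_{\max}$ analogue are what make it work); the paper's approach is more generic in that the overlaps of $\supp u$ and $\supp u(\cdot/2)$ need not be aligned with any algebraic exactness. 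One small remark: your closing paragraph warns against the telescoping construction $\phi_j^2=\Phi_j-\Phi_{j-1}$ because it would force a Gevrey square root of a degenerate nonnegative function, which is a legitimate concern — but note that the paper does not take that route either; it avoids the square-root problem by normalization rather than by the trigonometric device, so that remark describes a third (abandoned) approach rather than the paper's.
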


\begin{proof}
Fix $u\in C^\infty(\mathbb R)$ a Gevrey--$s$ ``mother" function such that
\[
0\le u\le 1,\qquad
\supp(u)\subset[0.45,1.1],\qquad
u\equiv 1 \text{ on } [1/2,1],
\]
and for some constants $A,B>0$ (depending only on $s$),
\begin{equation}\label{eq:template-gevrey}
\|\partial_r^ku\|_{L^\infty(\mathbb R)}\le A B^k (k!)^s,\qquad k\ge 0.
\end{equation}
(The existence of such $u$ is standard for $s>1$. For example, see \cite{israelmayeli2023acha}.) 
Fix $j\le j_{\max}$, and define the cutoffs on $[0,R)$ by
\begin{equation}\label{eq:pre-cutoff}
\widetilde\phi_j(r):=u\!\left(\frac{R-r}{2^j}\right).
\end{equation}
It is clear that $\supp(\widetilde\phi_j)\subset I_j$, where $I_j= [R-1.1 2^j, R-0.9 2^{j-1}],
$. 
Moreover, by the chain rule and
\eqref{eq:template-gevrey}, we obtain 
\begin{equation}\label{eq:pre-derivs}
\|\partial_r^k\widetilde\phi_j\|_{L^\infty([0,R])}
\le A B^k (k!)^s 2^{-jk},\qquad k\ge 0.
\end{equation}

Now we construct $\phi_j$: Define \begin{equation}\label{eq:W-def}
W(r):=\sum_{j\le j_{\max}}\widetilde\phi_j(r)^2,\qquad r\in[0,R). 
\end{equation}
There are absolute constants $0<c\leq C<\infty$ such that $c\leq W(r)\leq C$. 
We define $\phi_j$ as the normalized cutoffs
\begin{equation}\label{eq:phi-def}
\phi_j(r):=\frac{\widetilde\phi_j(r)}{\sqrt{W(r)}},\qquad r\in[0,R).
\end{equation}
By the support condition for the function $u$, it is clear that the sum \eqref{eq:W-def} is locally finite, since for any $r\in [0,R)$, only for two indices $j$ for which $\widetilde\phi_j(r)\neq 0$. 

By the definition of $\phi_j$, the properties (R1)--(R2) hold immediately. 
Next, we prove the property (R3) for $\phi_j$: On $\supp(\widetilde\phi_j)$, the sum $W(r)$ involves only $\widetilde\phi_{j-1}, \widetilde\phi_j, \widetilde\phi_{j+1}$. Therefore, by \eqref{eq:pre-derivs}, one obtains on $\supp(\widetilde\phi_j)$
the Gevrey--$s$ bounds
\[
\|\partial_r^k W\|_{L^\infty(\supp(\widetilde\phi_j))}
\le A' (B')^k (k!)^s 2^{-jk},\qquad k\ge 0,
\]
with constants $A',B'$ depending only on $s$ and $u$. Since $W(r)\in [c,C]$, and $c>0$, and the function $F(x)=x^{-1/2}$ is real analytic on $[c,C]$, by the stability of Gevrey functions under the composition with an analytic function, we obtain 
on 
$\supp(\widetilde\phi_j)$
\[
\|\partial_r^k (W^{-1/2})\|_{L^\infty(\supp(\widetilde\phi_j))}
\le A'' (B'')^k (k!)^s 2^{-jk},\qquad k\ge 0.
\]
Applying Lemma~\ref{lemma:Gevrey:multiplication} to $\phi_j = \widetilde\phi_j \cdot W^{-1/2}$ proves (R3). 
\end{proof}

\begin{lemma}[Construction of angular cutoffs]\label{lem:angular-cutoffs}
Fix $s>1$. For each $0\le j\le j_{\max}$ let $m(j):=2^{j_{\max}-j}$ and
\[
\Theta_{j,k}:=\Big[\tfrac{2\pi(k-1)}{m(j)}, \tfrac{2\pi k}{m(j)}\Big),
\qquad k=1,\dots,m(j).
\]
Set
\[
\Delta_j:=\frac{2\pi}{m(j)},\qquad
\Theta_{j,k}^\ast:=\Big[\tfrac{2\pi(k-1)}{m(j)}-0.05 \Delta_j, 
\tfrac{2\pi k}{m(j)}+0.05 \Delta_j\Big)
\subset \mathbb R,
\]
and view $\Theta_{j,k}^\ast$ periodically modulo $2\pi$.
Then there exist functions $\{\eta_{j,k}\}_{k=1}^{m(j)}\subset C^\infty(S^1)$, $S^1=[0, 2\pi)$, and
absolute constants $C_1,C_2>0$ (depending only on $s$) such that
\begin{enumerate}
\item[(A1)] $\sum_{k=1}^{m(j)}\eta_{j,k}(\theta)^2=1$ for all $\theta\in S^1$,
\item[(A2)] $\supp(\eta_{j,k})\subset \Theta_{j,k}^\ast$,
\item[(A3)] $\|\partial_\theta^k\eta_{j,k}\|_{L^\infty(S^1)}\le C_1 C_2^k (k!)^s m(j)^k$ for all $k\ge 0$.
\end{enumerate}
\end{lemma}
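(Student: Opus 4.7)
The plan is to mirror the proof of Lemma~\ref{lem:radial-cutoffs}, with the scale $2^j$ replaced by the angular bump width $\Delta_j = 2\pi/m(j)$ and with due care for periodicity on $S^1$. In the degenerate case $m(j)=1$ (i.e.\ $j=j_{\max}$), we can set $\eta_{j,1}\equiv 1$, which trivially satisfies (A1)--(A3) with $C_1=1$. Therefore we may assume $m(j)\ge 2$.

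First, I would fix a Gevrey--$s$ mother function $u\in C^\infty(\R)$ with $0\le u\le 1$, $u\equiv 1$ on $[-1/2,1/2]$, $\supp(u)\subset[-0.55,0.55]$, and $\|\partial^k u\|_{L^\infty}\le A B^k (k!)^s$; such $u$ exists for $s>1$ (the same template used for the radial case). Center a rescaled copy on the midpoint of each arc by setting, for $k=1,\dots,m(j)$,
\[
\widetilde\eta_{j,k}(\theta):=u\!\left(\frac{m(j)}{2\pi}\Big(\theta-\tfrac{\pi(2k-1)}{m(j)}\Big)\right),
\]
viewed as a $2\pi$-periodic function on $S^1$. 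Since $1.1\,\Delta_j\le \pi<2\pi$ when $m(j)\ge 2$, each periodic image is unambiguously supported in $\Theta_{j,k}^\ast$, giving (A2) at the pre-normalization stage. The chain rule together with the Gevrey bound on $u$ gives the derivative estimate $\|\partial_\theta^k\widetilde\eta_{j,k}\|_{L^\infty}\le A B'^k(k!)^s m(j)^k$, with $B'=B/(2\pi)$.

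Next, I would set $W(\theta):=\sum_{k=1}^{m(j)}\widetilde\eta_{j,k}(\theta)^2$. The support condition forces at most two of the $\widetilde\eta_{j,k}$ to be nonzero at any $\theta\in S^1$ (only adjacent arcs overlap, in seams of angular width $0.1\Delta_j$), while the plateau condition $u\equiv 1$ on $[-1/2,1/2]$ forces at least one term to equal $1$; hence $1\le W\le 2$ on $S^1$. Defining
\[
\eta_{j,k}(\theta):=\frac{\widetilde\eta_{j,k}(\theta)}{\sqrt{W(\theta)}}
\]
makes (A1) and (A2) immediate.

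For (A3) I would argue as in the radial case: on $\supp(\widetilde\eta_{j,k})$ the sum $W$ involves only the three neighbors $\widetilde\eta_{j,k-1},\widetilde\eta_{j,k},\widetilde\eta_{j,k+1}$ (indices mod $m(j)$), so Lemma~\ref{lemma:Gevrey:multiplication} (applied twice) gives $W$ Gevrey--$s$ with constants $(A'',B''m(j))$ on that support, uniformly in $j,k$. Since $W$ takes values in the fixed compact interval $[1,2]$ on which $x\mapsto x^{-1/2}$ is real analytic, composition preserves Gevrey--$s$ regularity (with constants independent of $j$), yielding $\|\partial_\theta^k W^{-1/2}\|_{L^\infty(\supp(\widetilde\eta_{j,k}))}\le A''' (B''')^k (k!)^s m(j)^k$. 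A final application of Lemma~\ref{lemma:Gevrey:multiplication} to the product $\eta_{j,k}=\widetilde\eta_{j,k}\cdot W^{-1/2}$ gives (A3). The main obstacle in the argument is the bookkeeping for the Gevrey-with-real-analytic composition step giving control of $W^{-1/2}$; this is standard (see \cite{Rod1}) and can also be verified directly via Fa\`a di Bruno with the usual Gevrey estimate on the Bell polynomials, noting that the factor $m(j)^k$ tracks cleanly through the chain rule because each derivative of $\widetilde\eta_{j,k}$ carries exactly one factor of $m(j)$.
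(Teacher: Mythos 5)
Your construction mirrors the paper's proof exactly: rescaled and translated Gevrey--$s$ bumps $\widetilde\eta_{j,k}$, normalization by $W_j=\sum_k\widetilde\eta_{j,k}^2\in[1,2]$, and then Lemma~\ref{lemma:Gevrey:multiplication} together with composition with the real-analytic map $x\mapsto x^{-1/2}$ on $[1,2]$ to propagate the Gevrey bound through the division; the paper itself defers (A1)--(A3) to ``an argument entirely analogous to the radial cutoffs,'' which is precisely what you spell out. One small arithmetic slip: for $m(j)=2$ you have $\Delta_j=\pi$ and $1.1\Delta_j=1.1\pi>\pi$, so the inequality ``$1.1\Delta_j\le\pi$'' is false, but the inequality you actually need, $1.1\Delta_j<2\pi$, does hold for all $m(j)\ge 2$, so the argument is unaffected.
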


\begin{proof} 
Let $u\in C^\infty(\mathbb R)$ be a fixed Gevrey--$s$ template satisfying
\[
0\le u\le 1,\qquad \supp(u)\subset[-0.55, 0.55],\qquad
u\equiv 1 \text{ on }[-0.50, 0.50],
\]
and for some constants $A,B>0$ (depending only on $s$),
\begin{equation}\label{eq:ang-template-gevrey}
\|u^{(k)}\|_{L^\infty(\mathbb R)}\le A B^k (k!)^s,\qquad k\ge 0.
\end{equation}
For fixed $j$, define the angular windows on $\mathbb R$ by
\[
\widetilde\eta_{j,k}(\theta):=
u\!\left(\frac{\theta-\theta_{j,k}}{\Delta_j}\right),
\qquad
\theta_{j,k}:=\Big(k-\tfrac12\Big)\Delta_j,
\qquad k=1,\dots,m(j).
\]
Since $\supp(u)\subset[-0.55, 0.55]$, we have \[
\supp(\widetilde\eta_{j,k})
\subset \big[\theta_{j,k}-0.55 \Delta_j, \theta_{j,k}+0.55 \Delta_j\big]
=
\Big[\tfrac{2\pi(k-1)}{m(j)}-0.05 \Delta_j, \tfrac{2\pi k}{m(j)}+0.05 \Delta_j\Big]
=\Theta_{j,k}^\ast,
\]
which gives (A2) for $\widetilde\eta_{j,k}$.
Next set
\[
W_j(\theta):=\sum_{k=1}^{m(j)} \widetilde\eta_{j,k}(\theta)^2,
\qquad
\eta_{j,k}(\theta):=\frac{\widetilde\eta_{j,k}(\theta)}{\sqrt{W_j(\theta)}}.
\] 
By an argument entirely analogous to the radial cutoffs, one can prove that the angular cutoffs $\{\eta_{j,k}\}$ satisfy \emph{(A1)}--\emph{(A3)} with constants uniform in $j$ and $k$. Since the proof is routine, we omit the details.
\end{proof}
 
\FloatBarrier
\section{List of Notation}\label{sec:notations} 
\begin{table}[H]
\centering
\setlength{\tabcolsep}{4pt}
\small
\renewcommand{\arraystretch}{0.9}
\begin{threeparttable}
\caption{Notation used throughout.}
\label{tab:notation}
\small
\begin{tabularx}{\textwidth}{@{}>{\raggedright\arraybackslash}p{0.38\textwidth} X@{}}
\toprule
\textbf{Symbol} & \textbf{Meaning} \\
\midrule
\multicolumn{2}{@{}l@{}}{\textit{General conventions}}\\
\addlinespace[2pt]
$A\lesssim B$ & $A\le C B$ with constant $C>0$ independent of scale parameter(s). \\
$A\approx B$ & Both $A\lesssim B$ and $B\lesssim A$. \\
$|E|$ & Lebesgue measure of $E\subset\mathbb{R}^2$. \\
$\mathbf{1}_E$ & Indicator function of a set $E$. \\
$\langle f,g\rangle$ & $L^2$ inner product; $\|f\|_2^2=\langle f,f\rangle$. \\
\midrule
\multicolumn{2}{@{}l@{}}{\textit{Spaces and variables}}\\
\addlinespace[2pt]
$x=(x_1,x_2)$, $\xi=(\xi_1,\xi_2)$ & Spatial and frequency variables. \\
$(r, \theta) =(|x|,\arg(x))$, $(\rho, \phi) =(|\xi|,\arg(\xi))$& Polar coordinates. \\
${\bf m}$ & $2$-dimensional integer lattice point.\\
$s$ & Gevrey parameter. \\
\midrule
\multicolumn{2}{@{}l@{}}{\textit{Fourier analysis}}\\
\addlinespace[2pt]
$\mathcal{F}$, $\mathcal{F}^{-1}$ & Fourier transform and inverse (with the chosen normalization). \\
$\widehat f$ & Fourier transform of $f$. \\
\midrule
\multicolumn{2}{@{}l@{}}{\textit{Geometry and dilations}}\\
\addlinespace[2pt]
$S$ & Fixed frequency domain in $\mathbb{R}^2$. \\
$\cH^1(\partial S)$ & $1$D Hausdorff measure of boundary of $S$. \\
$F$ & Fixed spatial domain in $\mathbb{R}^2$.\\
$F(R)$ & Dilated spatial domain $F(R)= \{Rx:x\in F\}$ at scale $R$. \\
\midrule
\multicolumn{2}{@{}l@{}}{\textit{Spatio--spectral limiting}}\\
\addlinespace[2pt]
$P_{F}$ & Spatial projection: $(P_{F} f)(x)=\mathbf{1}_{F}(x)f(x)$. \\
$B_{S}$ & Band-limiting projection to $S$: $B_{S} f=\mathcal{F}^{-1}(\mathbf{1}_{S}\widehat f)$. \\
$T_R$ & Limiting operator: $T_R=P_{F(R)}B_{S}P_{F(R)}$. \\
$\lambda_k(T_R)$ & Eigenvalues of $T_R$ (positive and nonincreasing). \\
\midrule
\multicolumn{2}{@{}l@{}}{\textit{Wave packets, indices, and special functions}}\\
$\cI$ & Wave packet index set. \\
$\cI^{\mathrm{int}}$, $\cI^{\mathrm{bdry}}$ & The interior/boundary split of index set $\cI$.\\
$\nu$ & Packet index $\nu=(j,k,\mathbf{m})$. \\
$\psi_{j,k,\mathbf{m}}$ & Wave packet localized to sector $S_{j,k}$ with modulation index $\mathbf{m}$. \\
$J_n$ & Bessel function of the first kind (order $n$). \\
\midrule
\multicolumn{2}{@{}l@{}}{\textit{Disc and discretization objects}}\\
 $j_{max}$ & 
Related to the dyadic assumption $R=2^{j_{max}}$.
 \\
 $m(j)$ &
$\displaystyle
m(j)=
\begin{cases}
2^{j_{max}-j}, & 0\le j\le j_{max},\\
2^{j_{max}}, & j<0.
\end{cases}$ \\
$\Theta_{j,k}$ & Angular arcs defining the partition. \\
$S_{j,k}$ & The radial--angular sectors. \\
$S_{j,k}^\ast$ & Enlargement of $S_{j,k}$. \\
\midrule
\multicolumn{2}{@{}l@{}}{\textit{Cutoff windows}}\\
$\phi_j$
 & Radial cutoffs. \\
 $I_j$ & Support interval of $\phi_j$. \\
 $\eta_{j,k}$ & Angular cutoffs (paired with $\phi_j$ 
 in wave packet definition).\\
 \bottomrule
\end{tabularx}
\end{threeparttable}
\end{table}

\end{appendices}

\newpage
 
 
\printbibliography

\end{document}